\documentclass[11pt,reqno]{amsart}

 \usepackage{amsmath,amsthm,amsfonts}
\usepackage{latexsym,mathabx,txfonts}
\usepackage{amssymb}

\newcommand{\fy}{\varphi}

\newcommand{\p}{\partial}

\newcommand{\R}{\mathbb{R}}

\def\f{\frac}

\numberwithin{equation}{section}

\newtheorem{thm}{Theorem}[section]
\newtheorem{cor}[thm]{Corollary}
\newtheorem{lem}[thm]{Lemma}
\newtheorem{prop}[thm]{Proposition}

\theoremstyle{remark}

\def\less{\lesssim}

\newcommand{\EQ}[1]{\begin{equation}  \begin{split} #1 \end{split} \end{equation} }
\setlength{\marginparwidth}{2cm}

\newcommand{\Del}[1]{}

\newcommand{\si}{\sigma}

\def\nn{\nonumber}

\def\eps{\varepsilon}
\def\sign{\mathrm{sign}}

\def\f{\frac}

\def\fy{\varphi}
\def\cong{\equiv}

\begin{document}

\title[Large global solutions]{Large global solutions for energy supercritical nonlinear wave equations on $\R^{3+1}$.}

\author{Joachim Krieger, Wilhelm Schlag}

\subjclass{35L05, 35B40}

\keywords{supercritical wave equation, large global smooth solutions}

\thanks{Support of the National Science Foundation DMS-1160817 for the second author, and  the Swiss National Fund for
the first author are gratefully acknowledged. The latter would like to thank the University of Chicago for its hospitality in July 2013, where a portion of this research was conducted.}

\begin{abstract}
For the radial energy-supercritical nonlinear wave equation $$\Box u = -u_{tt} + \triangle u = \pm u^7$$ on $\R^{3+1}$, we prove the existence of a class of global in forward time $C^\infty$-smooth solutions with infinite critical Sobolev norm $\dot{H}^{\frac{7}{6}}(\R^3)\times \dot{H}^{\f16}(\R^3)$.  Moreover, these solutions are stable under suitably small perturbations . We also show that for the {\em defocussing} energy supercritical wave equation, we can construct such solutions which moreover satisfy the size condition $\|u(0, \cdot)\|_{L_x^\infty(|x|\geq 1)}>M$ for arbitrarily prescribed $M>0$. These solutions are stable under suitably small perturbations. 
Our method proceeds by regularization of self-similar solutions which are smooth away from the light-cone but singular on the light-cone. 
The argument crucially depends on the supercritical nature of the equation. 
Our approach should be seen as part of the program initiated in~\cite{KrSchTat1}, \cite{KrSchTat2}, \cite{DoKri}. 
\end{abstract}

\maketitle

\section{Introduction}

We consider in this paper the energy super-critical defocussing/focussing nonlinear wave equation on $\R^{3+1}$, 
\EQ{\label{NLW7}
\Box u\pm u^7=u_{tt}-\Delta u \pm u^7 =0. 
}
The precise power does not play a significant role in the sequel, except for the fact that the problem is energy super-critical. As far as we know, in spite of certain evidence from numerical experiments in the defocussing case that solutions to sufficiently regular but large data appear to stay globally regular, there is no {\it{unconditional}} result asserting global existence of smooth solutions belonging to any class of ``large data", excepting the trivial time periodic solutions in the defocussing case that do not depend on the spatial variable\footnote{These solutions are however most likely unstable under generic perturbations.}. By large data, we mean data which are large in the scaling invariant, hence critical Sobolev space $\dot{H}^{\frac{7}{6}}\times \dot{H}^{\f16}$, and which do not possess some ``hidden" smallness assumption\footnote{Such a smallness assumption can be used to show smallness of suitable critical Strichartz norms for the free wave propagation of the data, which in turn forces the nonlinear solution to essentially behave like a free wave.}, such as  the Besov norm condition on the data $u[0] = (u, u_t)|_{t = 0}$
\EQ{\label{Besov}
\|u[0]\|_{\dot{B}^{\frac{7}{6},2}_{\infty}(\R^3)\times \dot{B}^{\frac{1}{6},2}_{\infty}(\R^3)}<\eps
}
with $\eps$ depending on the size of $\|u[0]\|_{\dot{H}^{\frac{7}{6}}\times \dot{H}^{\f16}}$. 
More precisely, one might consider data $u[0] = (u, u_t)|_{t = 0}$ large provided\footnote{More precisely, the first norm $\|u[0]\|_{\dot{H}^{\frac{7}{6}}(\R^3)\times \dot{H}^{\frac{1}{6}}(\R^3)}$ is assumed to be extremely large compared to the Besov norm $\|u[0]\|_{\dot{B}^{\frac{7}{6},2}_{\infty}(\R^3)\times \dot{B}^{\frac{1}{6},2}_{\infty}(\R^3)}$, and so that the free wave propagation of the data does not have small critical Strichartz norms.} 
\begin{equation}\label{eq:largeCrit}
\|u[0]\|_{\dot{H}^{\frac{7}{6}}(\R^3)\times \dot{H}^{\frac{1}{6}}(\R^3)}\gg 1,\qquad \|u[0]\|_{\dot{B}^{\frac{7}{6},2}_{\infty}(\R^3)\times \dot{B}^{\frac{1}{6},2}_{\infty}(\R^3)}\gtrsim 1
\end{equation}
or also
\begin{equation}\label{eq:infiniteCrit}
\|u[0]\|_{\dot{H}^{\frac{7}{6}}(\R^3)\times \dot{H}^{\frac{1}{6}}(\R^3)} = \infty
\end{equation}
We shall only be interested in $C^\infty$-smooth initial data of precisely this type, although our construction of such data proceeds 
by regularizing certain self-similar solutions which exhibit a singularity on the light-cone. 
Thus if such smooth data satisfy~\eqref{eq:infiniteCrit}, this is due to insufficient decay at infinity, and not to some singular behavior in finite space-time. 
We note here that very sharp global existence results for data satisfying a weak Besov smallness condition such as \eqref{Besov} were derived by F. Planchon in \cite {Plan1}, \cite{Plan2}.  
\\

Our purpose in  this paper is to exhibit a class of $C^\infty$-smooth, global in forward time solutions which obey \eqref{eq:infiniteCrit} and are thus outside the scope of a standard perturbative argument around zero, using the Strichartz framework. Moreover, in the {\it{defocussing case}}, we show that these solutions can be forced to have arbitrarily large amplitude\footnote{Observe that any nonzero solution can be forced to have large amplitude near the origin by re-scaling it. However, large amplitude far away from the origin corresponds (in the radial case) in some sense to ``large solutions".} on the set $\{|x|\geq 1\}$. Our argument for the first result hinges crucially on the energy super-critical nature of the equation, and for the second uses both the defocussing as well as the supercritical character. As a byproduct of our method we also obtain the stability of our solutions with respect to suitably mild perturbations. 
The main results of this paper are the following theorems.  

\begin{thm}\label{thm:Main1}
For both the defocussing/focussing supercritical nonlinear wave equation \eqref{NLW7} on $\R^{3+1}$, there exist smooth data sets $(f, g)\in C^\infty\times C^\infty$ decaying at infinity to zero and satisfying $$\|(f, g)\|_{\dot{H}^{\frac{7}{6}}(\R^3)\times \dot{H}^{\frac{1}{6}}(\R^3)} = \infty \text{\ \  but\ \ } 
\|(f, g)\|_{\dot{H}^{s}(\R^3)\times \dot{H}^{s-1}(\R^3)}<\infty$$ for any $s>\frac{7}{6}$, and such that the corresponding evolution of \eqref{NLW7} exists globally in forward time as a $C^\infty$-smooth solution. These solutions are stable under a certain class of perturbations. 
\end{thm}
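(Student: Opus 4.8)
The plan is to construct the solutions by regularizing a self-similar profile. First I would look for a self-similar solution of $\Box u \pm u^7 = 0$ of the form $u(t,x) = (t_0 - t)^{-1/3} Q(r/(t_0-t))$ with $r = |x|$, adapted to the scaling $u \mapsto \lambda^{1/3} u(\lambda t, \lambda x)$ under which $\dot H^{7/6}\times\dot H^{1/6}$ is invariant. Writing $a = r/(t_0-t) \in [0,1]$ for the interior of the backward light cone from $(t_0,0)$, the profile $Q(a)$ satisfies an ODE of the form $(1-a^2)Q'' + (\text{lower order in }a,Q') \pm Q^7 = 0$, which is regular at $a = 0$ and has a regular singular point at $a = 1$, the light cone. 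I expect a one-parameter family of smooth-on-$[0,1)$ solutions with a prescribed value $Q(0)$; near $a = 1$ such solutions generically have a logarithmic or power-type singularity in $Q$ or its derivatives, so $Q$ is smooth \emph{inside} the cone but only of finite regularity across it. The key structural point, and the place where supercriticality enters, is that after the self-similar change of variables the nonlinearity $\pm Q^7$ is a \emph{lower-order perturbation} relative to the principal part near the singular point, so one can treat the existence of $Q$ perturbatively off the linear self-similar ODE.

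Next I would turn the interior self-similar profile into genuine Cauchy data. Fix $t_0 > 0$ and consider the time slice $t = 0$: on $\{|x| \le t_0\}$ set $(f,g)$ equal to $(u, u_t)$ of the self-similar solution, and on $\{|x| > t_0\}$ extend by \emph{free} propagation or simply by a smooth compactly-supported-in-frequency tail so that $(f,g)$ is globally $C^\infty$ and decays to zero. Because $Q$ is smooth on the closed ball minus the sphere $|x| = t_0$ but only $C^k$ across that sphere for finite $k$, and because the exact solution we are perturbing toward is smooth only up to the cone, the honest statement is that we do \emph{not} use the raw self-similar data; instead we mollify, replacing $Q$ by a family $Q_\varepsilon$ (e.g. truncating the singular part, or convolving in the self-similar variable) producing $C^\infty$ data $(f_\varepsilon, g_\varepsilon)$. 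The claim $\|(f,g)\|_{\dot H^{7/6}\times\dot H^{1/6}} = \infty$ while $\|(f,g)\|_{\dot H^s\times\dot H^{s-1}} < \infty$ for $s > 7/6$ would then follow from the precise asymptotics of $Q$ near $a = 1$: the self-similar blowup rate $(t_0-t)^{-1/3}$ forces the critical norm to diverge (this is the scaling-invariant norm of a profile with a self-similar, i.e.\ homogeneous-type, singularity structure carried out to spatial infinity after the regularization is arranged to leave a slowly-decaying tail), while any strictly subcritical-in-regularity-but-same-scaling combination $\dot H^s \times \dot H^{s-1}$, $s > 7/6$, sees only finite norm because the singularity is exactly borderline. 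A clean way to see the divergence is a Besov/Littlewood–Paley computation: the regularized profile has dyadic pieces whose $\dot H^{7/6}$ norms are roughly constant over a long range of frequencies (or spatial scales), so they are not square-summable, whereas the extra decay $s - 7/6 > 0$ makes them geometrically summable.

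The heart of the argument is global forward-in-time existence for the \emph{regularized} solution. I would set up a perturbative scheme: write the sought solution as $u = u_{\mathrm{app}} + \varepsilon$ where $u_{\mathrm{app}}$ is built from $Q_\varepsilon$ (self-similar inside the cone, glued to something controlled outside), compute the error $\Box u_{\mathrm{app}} \pm u_{\mathrm{app}}^7 = e$, and solve the equation for the remainder. Inside the light cone the natural coordinates are the self-similar ones $(\tau, a)$ with $\tau = -\log(t_0 - t) \to +\infty$; in these coordinates the linearized operator around $Q$ is (up to the supercritical lower-order nonlinear term, which is \emph{small} in the relevant norm precisely because of supercriticality — this is the crucial mechanism advertised in the abstract) a self-similar wave operator on the cone, and one needs energy estimates uniform as $\tau \to \infty$, i.e.\ decay estimates for the linearized flow. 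Outside the cone one uses ordinary Strichartz / energy estimates for $\Box$ on $\R^{3+1}$; the two regions are matched across the cone. Forward-time global existence on $t \in [0, t_0)$ comes from the self-similar description (the solution exists on the whole backward cone from $(t_0,0)$ in original time, which is $t < t_0$), and for $t \ge t_0$ one continues by small-data / finite-energy theory since by then the solution has dispersed. The defocussing sign is not needed for Theorem~\ref{thm:Main1}; it will be needed only for the amplitude lower bound in the second theorem.

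The main obstacle I anticipate is the uniform-in-$\tau$ control of the linearized self-similar evolution inside the cone: one must rule out growing modes and obtain genuine decay of the remainder as $\tau \to \infty$, which requires understanding the spectrum of the stationary linearized operator $\mathcal{L}_Q$ (a non-self-adjoint operator because of the first-order transport terms intrinsic to self-similar variables) and proving a coercive energy estimate modulo finitely many bad directions — possibly after modulating the parameter $t_0$ (and the center of the cone) to quotient out the symmetry-induced zero modes. A secondary difficulty is making the gluing across the light cone clean enough that the error $e$ is both smooth and small in the function spaces used on each side; one expects to lose a little regularity at the cone from the mollification, so the bookkeeping of how $\varepsilon$ (mollification parameter) trades against smoothness and smallness of the error must be arranged carefully. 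The stability statement should then be essentially free: the same perturbative scheme with a slightly larger remainder ball absorbs any sufficiently small (in the norms dictated by the scheme) perturbation of the data.
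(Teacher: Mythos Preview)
Your scheme has the time direction inverted relative to the paper, and this matters. You use a \emph{contracting} profile $(t_0-t)^{-1/3}Q(r/(t_0-t))$ which concentrates at $t_0$; the paper uses the \emph{expanding} profile $t^{-1/3}Q(r/t)$, taken for $t\ge 1$ and decaying as $t\to\infty$. With the expanding profile, after the $|1-a|^{2/3}$ singularity on the cone is excised by a smooth cutoff $\chi(t-r)$, the error $e=\Box u \mp u^7$ is supported on $|r-t|\lesssim 1$ and satisfies $\|e(t)\|_{L^2_x}\lesssim t^{-4/3}\in L^1_t([1,\infty))$; this integrability is precisely the manifestation of supercriticality here. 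Your claim that for $t\ge t_0$ ``the solution has dispersed'' has no mechanism behind it. Moreover, the infinite $\dot H^{7/6}\times\dot H^{1/6}$ norm does \emph{not} come from the cone singularity (that is completely removed and the data are genuinely $C^\infty$); it comes from the spatial tail $u(1,r)\sim r^{-1/3}$ inherited from the \emph{exterior} self-similar solution, which the paper constructs separately on $a>1$ and glues continuously to the interior one at $a=1$. Extending outside the cone ``by free propagation'' as you propose would yield finite-critical-norm data.

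The perturbative step is also much simpler than you anticipate: no self-similar coordinates, no spectral analysis of a linearized operator $\mathcal L_Q$, no modulation. One writes the exact solution as $u+v$ and runs a bootstrap for $v$ in physical space on $[1,\infty)$, combining (i) an energy identity with a mild weight $t^{\varepsilon}$ to control $\|\nabla_{t,x}v\|_{L^2}$, and (ii) a Strichartz/Duhamel estimate to control $\|v\|_{L^6_tL^{18}_x}$ and $\|v\|_{L^\infty_t\dot H^{7/6}}$. The two close together because the background $u$ lives at frequencies $\lesssim t^{-\sigma}$ as $t\to\infty$: splitting $u=P_{<t^{-\sigma}}u+P_{\ge t^{-\sigma}}u$ converts the $|\nabla|^{1/6}$ in the Duhamel term into a gain of $t^{-\sigma/6}$ on the low-frequency piece, while the high-frequency piece is already $L^6_t$-integrable; the energy bound absorbs the residual $v$-factor in $L^6_x$. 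All smallness comes from taking the amplitude $q_0=Q(0)$ small, not from a spectral gap, which is why the argument works identically in the focusing and defocusing cases.
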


We note that the solutions established by this theorem satisfy 
\[
\|f\|_{L_x^\infty(|x|\geq 1)}\ll 1
\]
In the following theorem we find solutions which are ``more nonlinear", as evidenced by a highly oscillatory character. 
This remark will become clearer in Section~\ref{sec:12}. 

\begin{thm}\label{thm:Main2}
Let $M>0$ be given arbitrarily. 
For the defocussing supercritical nonlinear wave equation \eqref{NLW7} on $\R^{3+1}$, there exist smooth data sets $(f, g)\in C^\infty\times C^\infty$ decaying at infinity to zero and satisfying $$\|(f, g)\|_{\dot{H}^{\frac{7}{6}}(\R^3)\times \dot{H}^{\frac{1}{6}}(\R^3)} = \infty \text{\ \ but\ \ }
\|(f, g)\|_{\dot{H}^{s}(\R^3)\times \dot{H}^{s-1}(\R^3)}<\infty$$ for all $s>\frac{7}{6}$, as well as 
\EQ{\label{fM}
\|f\|_{L_x^{\infty}(|x|\geq 1)}>M
}
and such that the corresponding evolution of \eqref{NLW7} exists globally in forward time as a $C^\infty$-smooth solution. These solutions are stable under a certain class of perturbations and they are not small in the Besov sense~\eqref{Besov}. 
\end{thm}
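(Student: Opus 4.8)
\emph{Proof sketch.} The plan is to obtain the data as a regularization of a self--similar profile. Inserting $u(t,x) = (1+t)^{-1/3}\phi(a)$, $a = |x|/(1+t)$, into \eqref{NLW7} with the defocusing sign reduces it to the second--order ODE
\EQ{\label{eq:ssODE}
(a^2-1)\,\phi'' + \Big(\tfrac{8}{3}a - \tfrac{2}{a}\Big)\phi' + \tfrac{4}{9}\phi + \phi^7 = 0 ,
}
which has the usual radial singularity at $a=0$, a regular singular point at the light cone $a=1$ with indicial exponents $0$ and $\tfrac23$ (so a solution continued across the cone carries a $|a-1|^{2/3}$--type branch and is only Hölder there), and, at $a=\infty$, two modes $\phi\sim a^{-1/3}$ and $\phi\sim a^{-4/3}$ with the first generic. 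The main step is a detailed study of \eqref{eq:ssODE}, the key work being on the exterior region $a>1$. There the defocusing sign makes the nonlinear term restoring, $\phi'' = -(a^2-1)^{-1}[\,(\tfrac83 a - \tfrac2a)\phi' + \tfrac49\phi + \phi^7\,]$, and the energy identity $\tfrac{d}{da}\big[\tfrac{a^2-1}{2}(\phi')^2 + \tfrac29\phi^2 + \tfrac18\phi^8\big] = (\tfrac2a - \tfrac53 a)(\phi')^2$ shows that any solution on $(1,\infty)$ is globally defined and bounded, oscillates (the number of oscillations growing with the amplitude, which is the ``highly oscillatory'' feature alluded to in the introduction), and, if launched from the cone with value $\phi(1)=\beta$, satisfies $\sup_{a\ge1}|\phi(a)|\ge\beta$ while relaxing as $a\to\infty$ to the generic mode $\phi\sim c\,a^{-1/3}$, with $c\neq0$ for a suitable choice of the $|a-1|^{2/3}$--coefficient. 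Taking $\beta>M$ then produces the desired exterior amplitude. (In the focusing case the nonlinear term is anti--restoring on $(1,\infty)$ and large profiles generically blow up at a finite $a$, i.e.\ a finite--distance singularity of the data; this is why Theorem~\ref{thm:Main2} is stated only for the defocusing equation.) One then picks a continuation to $a\in[0,1)$ that is regular at $a=0$, or — if that fails at the amplitudes needed — allows finitely many additional mild singular spheres $\{|x|=a_*(1+t)\}$, $a_*\le1$, which are removed in the next step.

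With such a profile $\phi$ in hand, the self--similar solution $u_S(t,x) = (1+t)^{-1/3}\phi(|x|/(1+t))$ solves \eqref{NLW7} and is $C^\infty$ away from the light cone $\{|x|=1+t\}$ (and the finitely many interior spheres, if any), with only Hölder regularity there; for $|x|\gg1+t$ it has the slow tail $u_S\approx c\,|x|^{-1/3}$. The second step regularizes $u_S$: mollify $\phi$ at a small scale $\e$ in $O(\e)$--neighbourhoods of $a=1$ and of the exceptional spheres, producing a globally $C^\infty$ approximate solution $u_\e(t,x) = (1+t)^{-1/3}\phi^{(\e)}(|x|/(1+t))$ with $\Box u_\e + u_\e^7 = e_\e$, where $e_\e$ is supported in thin cone neighbourhoods $\{\,\bigl| |x| - a_*(1+t)\bigr| \lesssim \e(1+t)\,\}$. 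The derivatives of $\phi^{(\e)}$ on the mollification zones blow up as $\e\to0$, but — here the \emph{energy supercritical} scaling is used — after weighting by the $(1+t)$--powers dictated by self--similarity, $e_\e$ is small in the space--time norm dual to the one used below; moreover $u_\e$ still coincides with $u_S$ for $|x|\gg1+t$, so $u_\e(0,\cdot)$ keeps the $|x|^{-1/3}$ tail, and fixing $\beta>M$ first and then $\e$ small gives $\|u_\e(0,\cdot)\|_{L^\infty_x(|x|\ge1)} \ge \beta - o_\e(1) > M$.

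The third step is a perturbative global--in--forward--time argument. Writing $u = u_\e + v$, the remainder solves $\Box v + \big[(u_\e+v)^7 - u_\e^7\big] = -e_\e$ with zero Cauchy data at $t=0$, and one solves for $v$ by a contraction in a norm built from Strichartz estimates together with localized weighted energy estimates matched to the $(1+t)^{-1/3}$ size of $u_\e$. The structural point — again supercriticality — is that $\|u_\e(t,\cdot)\|_{L^\infty_x}^6 \lesssim (1+t)^{-2}$ is time--integrable, so the linearized potential $7u_\e^6$ and the higher nonlinear terms are, in the scale--invariant sense, a small, well--localized perturbation; hence the contraction closes for all $t\ge0$ and yields a $C^\infty$ global solution with $v$ small and decaying. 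Reading off the conclusions: $(f,g):=(u,u_t)|_{t=0}\in C^\infty\times C^\infty$ has the $|x|^{-1/3}$ tail, so $\|(f,g)\|_{\dot H^{7/6}\times\dot H^{1/6}}=\infty$ while $\|(f,g)\|_{\dot H^s\times\dot H^{s-1}}<\infty$ for every $s>\tfrac76$; $\|f\|_{L^\infty_x(|x|\ge1)} \ge \|u_\e(0,\cdot)\|_{L^\infty_x(|x|\ge1)} - \|v(0,\cdot)\|_{L^\infty_x} > M$; the Besov bound \eqref{Besov} fails, since the large oscillatory bulk of $f$ near $|x|\sim1$ (and already the $|x|^{-1/3}$ tail) forces $\|(f,g)\|_{\dot B^{7/6,2}_\infty\times\dot B^{1/6,2}_\infty}\gtrsim1$; and stability under perturbations of the data small in the contraction norm is immediate. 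The main obstacle is the first step: producing, in the defocusing case, a self--similar profile with arbitrarily large exterior amplitude \emph{and} a genuinely nonzero $|x|^{-1/3}$ tail, controlled globally (no finite--$a$ blow--up beyond regularizable mild singularities) across the regular singular point $a=1$; a secondary difficulty is choosing the function spaces in the third step so that the cone--supported errors $e_\e$ are truly small and the bootstrap closes globally despite the merely polynomial decay of the background.
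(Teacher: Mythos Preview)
Your overall strategy---self-similar profile, regularize near the cone, perturbative completion---matches the paper's, but two of the three steps have genuine gaps that the paper resolves by different mechanisms.

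\textbf{First gap: achieving large amplitude while keeping a tractable interior.} You propose taking the continuous value $\phi(1)=\beta>M$ large. But continuity across $a=1$ then forces the interior solution on $[0,1)$ to have $\phi(1)=\beta$ large as well, and you give no construction of such an interior solution regular at $a=0$ (the hedge about ``additional mild singular spheres'' is not justified and would introduce further uncontrolled errors). The paper instead keeps the continuous value $Q(1)$ \emph{small}: near $a=1$ from the right one writes $Q(a)=(a-1)^{2/3}\tilde Q_1(a)+\tilde Q_2(a)+\ldots$ with $\tilde Q_2(1)=\tilde q_2$ small (this is $Q(1)$, matched to the small interior solution built from small $q_0$), while the \emph{branch coefficient} $\tilde q_1=\tilde Q_1(1)$ is taken arbitrarily large. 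Then at $a_*=1+c\,|\tilde q_2|\,\tilde q_1^{-4/3}$ one has $|Q(a_*)|\simeq|\tilde q_2|^{2/3}\tilde q_1^{1/9}$, which is made $>M$ by choosing $\tilde q_1\gg M^9$. The entire freedom is in the coefficient of the singular $(a-1)^{2/3}$ mode, not in the continuous value.

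\textbf{Second gap: the contraction does not close for large $M$ as stated.} Your closure mechanism is that $\|u_\e(t,\cdot)\|_{L^\infty}^6\lesssim(1+t)^{-2}$ is time-integrable. But the suppressed constant is $\|\phi^{(\e)}\|_{L^\infty}^6\gtrsim M^6$, so $\int_0^\infty 7\|u_\e(t)\|_{L^\infty}^6\,dt\gtrsim M^6$: integrability alone does not make the linearized potential $7u_\e^6$ a small perturbation. The paper closes the bootstrap by a different route: (i) the interior self-similar solution ($a<1$) is genuinely small, controlled by a tiny parameter $q_0$; (ii) the correction $v$ is launched at a large time $T$ with initial support in $\{|r-T|\le C\}$, and by sharp Huygens in radial $\R^3$ it stays in $\{r<t+C\}$; (iii) consequently $v$ overlaps the \emph{large} exterior part of $u$ only in the thin strip $\{t<r<t+C\}$, where every relevant integral gains a negative power of $T$. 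Taking $T$ large (depending on $\tilde q_1$, hence on $M$) provides the smallness needed to close, and a final rescaling $\tilde u\mapsto T^{1/3}\tilde u(Tt,Tr)$ brings the data to $t=0$ with the amplitude condition on $|x|\ge1$. Relatedly, the paper's excision uses a cutoff $\chi(t-r)$ of \emph{fixed} width $C$ (not width $\e(1+t)$ in your $a$-scale mollification), which is what puts the errors $e_j$ at size $t^{-7/3}$ on an $O(1)$ strip and makes them $L^1_tL^2_x$-integrable, consistent with the support argument above.
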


Let us also formulate two of the statements that follow from the methods of this paper for the context of smooth compactly supported data.

\begin{thm}
\label{thm:intro1}
Consider the defocusing equation~\eqref{NLW7}. 
For any $M>0$ there exist smooth compactly supported radial data $(f,g)$ with support in $B_2(0)$ and  
\EQ{\label{gross}
\| (f,g) \|_{\dot{H}^{\frac{7}{6}}\times \dot{H}^{\frac{1}{6}}(\R^3)} >M,
}
so that  \eqref{NLW7} admits a smooth solution $u$ for all times $0\le t\le 1$, which furthermore satisfies 
\EQ{\label{eq:StIn}
\inf_{\frac{1}{p}+\frac{1}{q}=\frac{1}{2},\,p>2}\|\,|\nabla_x|^{\alpha(q)}u\|_{L_t^pL_x^q([0,1]\times\R^3)}\geq 1
}
where $\alpha(q) =  \frac{2}{q} + \frac{1}{6}$. Moreover, the data can be chosen from an
open nonempty set relative to the norm in~\eqref{gross}. 
\end{thm}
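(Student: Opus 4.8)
The plan is to realize $(f,g)$ as a smooth regularization of a self-similar solution of the defocusing equation that carries a conormal singularity on a characteristic cone, to produce the asserted smooth solution on $[0,1]$ as a controlled perturbation of the regularized profile, and then to read off \eqref{gross}, \eqref{eq:StIn} and the openness from the structure of this construction.

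\emph{Step 1: the self-similar backbone.} I would take $u_S(t,x)=(T-t)^{-1/3}W(|x|/(T-t))$ with $T\in(1,2)$ fixed, where the radial profile solves the self-similar profile equation
\[
(1-\r^2)W''+\Big(\tfrac2\r-\tfrac83\r\Big)W'-\tfrac49W=-W^7\qquad(0<\r<1),
\]
is smooth and even at $\r=0$, is continued by $0$ on $\{\r\ge1\}$, and has the boundary behavior $W(\r)\sim c\,(1-\r)^{2/3}$ as $\r\uparrow1$; the exponent $\tfrac23=1-\tfrac2{p-1}$ is exactly $\dot H^{7/6}$-critical, so $W\in\dot H^{s}_{\mathrm{loc}}$ for $s<\tfrac76$ but not for $s=\tfrac76$. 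Since $\r=1$ is the unique singular point of this ODE on $(0,\infty)$ and corresponds to the characteristic cone $\{|x|=T-t\}$, a nontrivial solution may vanish identically on one side of it, and $u_S$ is then a solution of \eqref{NLW7} that is smooth away from, and merely continuous across, the cone, supported at $t=0$ in $B_T\subset B_2$. Producing such a $W$ is a nonlinear shooting problem in the parameter $W(0)$ — this is where the defocusing sign and the supercriticality $p>5$ enter — and, using the discrete family of such profiles (the mechanism behind Theorem~\ref{thm:Main2}), one can additionally arrange $W$ to have any prescribed critical norms.

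\emph{Step 2: regularization, data, and the error.} For $0<\delta\ll1$ replace $W$ near $\r=1$ by a smooth $W_\delta$ agreeing with $W$ on $\{1-\r>\delta\}$ and vanishing on $\{\r\ge1-\delta/2\}$, set $u_{S,\delta}(t,x)=(T-t)^{-1/3}W_\delta(|x|/(T-t))$ and $(f,g)=(u_{S,\delta},\p_t u_{S,\delta})\big|_{t=0}\in C^\infty_c(B_2)^2$. The critical-regularity behavior survives regularization, $\|(f,g)\|_{\dot H^{7/6}\times\dot H^{1/6}}\gtrsim(\log\tfrac1\delta)^{1/2}$ (or is large already from the choice of $W$), so \eqref{gross} holds, while $\|(f,g)\|_{\dot H^s\times\dot H^{s-1}}<\infty$ for all $s$ since $(f,g)\in C^\infty_c$. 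The crucial quantitative point — where energy-supercriticality is genuinely used — is that the error $e_\delta:=\Box u_{S,\delta}\pm u_{S,\delta}^7$ is small: it lives in the $O(\delta(T-t))$-shell just inside the cone, where $W_\delta=O(\delta^{2/3})$, $W_\delta'=O(\delta^{-1/3})$, $W_\delta''=O(\delta^{-4/3})$ while $1-\r^2$ vanishes to first order, so $e_\delta=O\big((T-t)^{-7/3}\delta^{-1/3}\big)$ on a set of measure $\simeq\delta(T-t)^3$, whence (using $T>1$)
\[
\|e_\delta\|_{L^1_tL^2_x([0,1]\times\R^3)}\lesssim\delta^{\,\frac12-\frac2{p-1}}=\delta^{1/6}\longrightarrow0,
\]
the positivity of the exponent $\tfrac12-\tfrac2{p-1}$ being precisely the condition $p>5$.

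\emph{Step 3: perturbation, conclusions, and the main obstacle.} I would then run the perturbative scheme of this program for $\epsilon:=u-u_{S,\delta}$, solving $\Box\epsilon\pm\big[(u_{S,\delta}+\epsilon)^7-u_{S,\delta}^7\big]=-e_\delta$ with zero data by Duhamel iteration together with a Grönwall argument on the bounded interval $[0,1]$: because $u_{S,\delta}$ is bounded on $[0,1]\times\R^3$ uniformly in $\delta$ and $e_\delta$ is small in $L^1_tL^2_x$, one obtains a unique $\epsilon$, small at the energy level, and persistence of regularity (data and source are $C^\infty$, $u_{S,\delta}$ is explicitly $C^\infty$) makes $u=u_{S,\delta}+\epsilon$ a $C^\infty$ solution of \eqref{NLW7} on $[0,1]\times\R^3$. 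The Strichartz lower bound \eqref{eq:StIn} follows from the fact that $(f,g)$ is not small in the Besov sense \eqref{Besov} — the regularized conormal profile genuinely disperses, so $S(t)(f,g)$ has critical Strichartz norm $\gtrsim1$ on $[0,1]$ for every admissible pair, and via the Duhamel identity run as a lower bound (valid when the nonlinear Strichartz norm is small, and trivial otherwise) the same holds for $u$; taking $W$ sufficiently oscillatory makes the implied constants $\ge1$. Openness is built in: replacing $(f,g)$ by $(f,g)+(h_1,h_2)$ with $(h_1,h_2)\in C^\infty_c(B_2)^2$ small in $\dot H^{7/6}\times\dot H^{1/6}$ only adds the small free evolution of $(h_1,h_2)$ to the source for $\epsilon$, so the scheme still closes and \eqref{gross}--\eqref{eq:StIn} persist, giving a nonempty set of admissible data open in the $\dot H^{7/6}\times\dot H^{1/6}$-topology on $C^\infty_c(B_2)^2$. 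The main obstacle is twofold: first, producing the self-similar profile $W$ with exactly this singular behavior and one-sided support (the nonlinear shooting problem, where the defocusing sign and $p>5$ are essential); second, propagating the smallness of the cone-concentrated error $e_\delta$ through the perturbative step while retaining enough regularity for the $C^\infty$ conclusion. The comparatively soft point is making the Strichartz lower bound uniform over all admissible pairs, and the bookkeeping needed to sharpen the constant there to exactly $1$.
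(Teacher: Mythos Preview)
Your Step~1 contains a genuine gap. You assert that the profile satisfies $W(\r)\sim c(1-\r)^{2/3}$ as $\r\uparrow1$ and can therefore be continued by zero on $\{\r\ge1\}$. But the backward self-similar profile obeys the same ODE~\eqref{ODE7} as the forward one, and Corollary~\ref{cor:1} shows that for any small nonzero $W(0)$ the expansion near $\r=1$ reads
\[
W(\r)=q_1(1-\r)^{2/3}\big(1+O(1-\r)\big)+q_2\big(1+O(1-\r)\big)
\]
with \emph{both} $q_1\ne0$ and $q_2\ne0$. Thus $W(1^-)=q_2\ne0$, and your continuation by zero creates a jump of fixed size~$q_2$, not a vanishing one. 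Your error estimate in Step~2 then collapses: on the transition shell $W_\de$ drops from $q_2+O(\de^{2/3})$ to~$0$ over width~$\de$, so $W_\de'=O(\de^{-1})$, $W_\de''=O(\de^{-2})$, and redoing your computation gives $\|e_\de\|_{L^1_tL^2_x}\gtrsim\de^{-1/2}\to\infty$ rather than $\de^{1/6}\to0$. The supercritical gain you invoke is exactly undone by the nonvanishing of~$q_2$. The paper's own profile analysis provides no large-$W(0)$ solutions inside the cone, and the ``discrete family'' you allude to from Theorem~\ref{thm:Main2} concerns large~$\tilde q_1$ \emph{outside} the cone with $q_2$ still small and nonzero, so it does not rescue this step.

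The paper proceeds quite differently. It never truncates the $q_2$-part: the regularized approximant~\eqref{eq:approxsol} retains the full term $t^{-1/3}Q_2(1)$ and cuts off only the genuinely singular piece $t^{-1/3}|1-a|^{2/3}X(a)$, which is what makes the error $O(t^{-7/3})$ in Section~\ref{sec:surgery1} work. The resulting global smooth solution $\tilde u=u+v$ of Theorem~\ref{thm:main} is \emph{not} compactly supported; it decays like $r^{-1/3}$, and it is this slow spatial decay, not any near-cone structure, that makes the critical norm infinite. Theorem~\ref{thm:intro1} then follows by truncating $\tilde u(1,\cdot)$ outside a large ball of radius~$R$ (chosen so the critical norm inside already exceeds~$M$ and the Strichartz norms accumulate past~$1$), invoking finite propagation speed, and rescaling. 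The defocusing sign enters precisely at this truncation step: the modification outside $B_R$ could in principle cause blowup, but for the radial defocusing problem Strauss' inequality and the conserved energy give a pointwise a~priori bound for all $r>0$, so blowup can occur only at the origin, which lies in the region controlled by finite speed. Your sketch does not account for this mechanism.
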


The space-time norms in \eqref{eq:StIn} are examples of Strichartz norms relevant in this context.  In fact, we may include any other
admissible Strichartz norms  in the infimum in~\eqref{eq:StIn}, as well as in the following theorem. 

\begin{thm}
\label{thm:intro2}
Consider the defocussing equation \eqref{NLW7} with the $+$-sign. 
For any $M_1, M_2>0$ there exist smooth compactly supported radial data $(f,g)$ with support in some $B_K(0)$, $K\geq 1$ such that the we have 
$$\|(f, g)\|_{\dot{H}^{\frac{7}{6}}\times \dot{H}^{\frac{1}{6}}(\R^3)}>M_1,$$ the evolution of these data exists on $0\leq t\leq \frac{K}{2}$ as a smooth function, and moreover  we have, with $\alpha(q)$ as in the previous theorem, 
\[
\inf_{\frac{1}{p}+\frac{1}{q}=\frac{1}{2},\,p>2}\| \, |\nabla_x|^{\alpha(q)}u\|_{L_t^pL_x^q([0,K/2]\times\R^3)}\geq 1,
\] 
as well as 
\[
\|f\|_{L_x^\infty(|x|\geq 1)}> M_2
\]
\end{thm}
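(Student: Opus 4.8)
The plan is to obtain Theorem~\ref{thm:intro2} by truncating to a large ball the global, infinite-critical-norm solutions produced by Theorem~\ref{thm:Main2}. Write $U$ for such a solution of \eqref{NLW7} with the defocussing ($+$) sign: $U$ exists as a $C^\infty$ function for all $t\ge0$, has $\|(U(0,\cdot),\partial_tU(0,\cdot))\|_{\dot{H}^{\f76}\times\dot{H}^{\f16}}=\infty$ while still lying in $\dot{H}^{s}\times\dot{H}^{s-1}$ for every $s>\f76$, and — applying Theorem~\ref{thm:Main2} with $M=M_2$ — obeys $\|U(0,\cdot)\|_{L^\infty_x(|x|\ge1)}>M_2$. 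Recall that $U$ is built by regularizing, at a small scale $\eps$, a highly oscillatory self-similar solution $u_{\mathrm{ss}}(t,x)=(t+\lambda)^{-\f13}\Phi_n\big(|x|/(t+\lambda)\big)$ (with $n$ an oscillation index) which is $C^\infty$ off the light cone $\{|x|=t+\lambda\}$, carries a critically supercritical singularity on it, and decays only like $|\cdot|^{-\f13}$ as $|x|\to\infty$; this slow decay is what keeps $(U(0,\cdot),\partial_tU(0,\cdot))$ out of $\dot{H}^{\f76}\times\dot{H}^{\f16}$, and the oscillatory, large-amplitude nature of $\Phi_n$, available only for the defocussing sign, is what forces the $L^\infty$ lower bound.

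Fix a large radius $K$ (chosen last) and a smooth radial cutoff $\chi$ with $\chi\equiv1$ on $B_{K/2}(0)$ and $\supp\chi\subset B_K(0)$, and set $(f,g)=\big(\chi\,U(0,\cdot),\ \chi\,\partial_tU(0,\cdot)\big)$: this is smooth, radial and supported in $B_K(0)$. Since the cone of $U$ sits at the fixed radius $|x|=\lambda=O(1)$, the supremum of $|U(0,\cdot)|$ over $\{|x|\ge1\}$ is attained well inside $B_{K/2}(0)$, so $\|f\|_{L^\infty_x(|x|\ge1)}=\|U(0,\cdot)\|_{L^\infty_x(|x|\ge1)}>M_2$ once $K$ is large. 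The critical norm $\|(f,g)\|_{\dot{H}^{\f76}\times\dot{H}^{\f16}}$ is finite (the data being smooth and compactly supported) but tends to $\infty$ as $K\to\infty$: one has $\big(\chi(\cdot/K)U(0,\cdot),\chi(\cdot/K)\partial_tU(0,\cdot)\big)\to\big(U(0,\cdot),\partial_tU(0,\cdot)\big)$ in $\mathcal S'\times\mathcal S'$, and the $\dot{H}^{\f76}\times\dot{H}^{\f16}$ norm is lower semicontinuous under such convergence, so the left side has norm $\ge\|(U(0,\cdot),\partial_tU(0,\cdot))\|_{\dot{H}^{\f76}\times\dot{H}^{\f16}}=\infty$ in the limit. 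Thus $\|(f,g)\|_{\dot{H}^{\f76}\times\dot{H}^{\f16}}>M_1$ for $K$ large.

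It remains to control the evolution $u$ of $(f,g)$ on $[0,K/2]$. By finite speed of propagation $u$ coincides with $U$ on $\{|x|<K/2-t\}$, and for $t\le K/4$ this region contains the whole ``core'' $\{|x|\lesssim t+\lambda\}$ once $K>4\lambda$; there $u=U$ is $C^\infty$, while on the complementary region $U$ is small (it is essentially the slowly decaying tail plus outgoing radiation), so $u$ continues as a $C^\infty$ solution on all of $[0,K/2]$; this step rests on the global existence and stability framework of the body of the paper, exactly as in Theorem~\ref{thm:intro1}. For the Strichartz lower bound one argues as in Theorem~\ref{thm:intro1}, only more easily: by self-similar scaling one has, for each admissible pair, the identity
\[
\big\|\,|\nabla_x|^{\alpha(q)}u_{\mathrm{ss}}\big\|_{L^p_tL^q_x([0,T]\times\R^3)}=\big(\log(1+T/\lambda)\big)^{1/p}\,\big\|\,|\nabla|^{\alpha(q)}\Phi_n\big\|_{L^q(\R^3)},
\]
the matching of exponents being a restatement of $\f1p+\f1q=\f12$ together with $\alpha(q)=\f2q+\f16$. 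Restricting the space-time integrals to $[0,K/4]\times\R^3$, where $u=U$ agrees with $u_{\mathrm{ss}}$ up to the $\eps$-scale regularization near the core and is small away from it, every admissible critical norm of $u$ is $\gtrsim(\log K)^{1/p}$ (for the constrained pairs to which $\Phi_n$ is too singular, $\|\,|\nabla|^{\alpha(q)}\Phi_n\|_{L^q}=\infty$ and one uses instead the near-cone lower bound $\gtrsim\eps^{-c}$ from the regularization). Hence the infimum appearing in the statement exceeds $1$ once $K$ is large and $\eps$ small, completing the proof.

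The crux, and the only substantial point, is the third paragraph's claim that the truncated datum $(f,g)$ does evolve into a $C^\infty$ solution on all of $[0,K/2]$ and that this solution still tracks the singular self-similar solution on the near-cone region. This is precisely the nonlinear existence and stability statement established, for its self-similar backbone, in the body of the paper, and it is where the supercritical structure of $u^7$ enters decisively — both to keep the regularized cone singularity from organizing a blow-up over long times and to absorb the error created by multiplying the data by $\chi$. The remaining checks are routine: applying Theorem~\ref{thm:Main2} with $M=M_2$ fixes $U$ (hence the oscillation index $n$ and the regularization scale $\eps$) so that the $L^\infty$ bound holds, after which taking $K$ large secures both the critical-norm bound and the Strichartz bound without affecting anything else.
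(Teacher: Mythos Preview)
Your overall strategy --- truncate the global solution from Theorem~\ref{thm:Main2} to a large ball and invoke finite propagation speed --- matches the paper's. The critical-norm and $L^\infty$ lower bounds, and the Strichartz lower bound via the scaling identity, are handled in the right spirit.

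There is, however, a genuine gap in your third paragraph: the argument you give for why the truncated data $(f,g)$ evolves smoothly on all of $[0,K/2]$ does not hold up. You assert that on the region where $u$ may differ from $U$ ``$U$ is small \ldots\ so $u$ continues as a $C^\infty$ solution,'' and then defer to ``the global existence and stability framework of the body of the paper.'' But outside the domain of dependence $\{|x|<K/2-t\}$ the solutions $u$ and $U$ are unrelated, so smallness of $U$ there tells you nothing about $u$; and the paper's perturbative machinery (Propositions~\ref{prop:bootstrap}, \ref{prop:bootstrap1}) is built to perturb around the approximate self-similar profile, not to absorb a large-radius spatial cutoff. Your appeal to stability is therefore not justified as written.

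The paper's actual mechanism is different and much simpler: for the \emph{defocusing} radial equation, the conserved energy is positive definite, so the Strauss estimate $|u(t,r)|\lesssim r^{-1/2}\|u(t)\|_{\dot H^1}$ gives a uniform pointwise bound away from $r=0$. Hence any finite-time blowup of a radial solution must occur at the origin. Since by finite propagation speed $u\equiv U$ on a neighborhood of the $t$-axis for $0\le t\le K/2$, and $U$ is smooth there, no blowup is possible and the local theory propagates smoothness on the whole slab. This is the step you are missing, and it is precisely where the defocusing sign is used (and why Theorem~\ref{thm:intro1} was also stated only in the defocusing case).
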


Note that the inequality 
\[
\inf_{\frac{1}{p}+\frac{1}{q}=\frac{1}{2},\,p>2}\|\,|\nabla_x|^{\alpha(p, q)}u\|_{L_t^pL_x^q([0,K/2]\times\R^3)}\geq 1
\]
means that all the scale invariant Strichartz norms of the solution are not small, precluding a simple perturbative argument around the free wave propagation of the initial data. Furthermore, the condition on the support precludes a simple construction piecing together small solutions in disjoint light cones. 
In fact, the philosophy of this work is to use a perturbative approach around suitably constructed {\it{elliptic nonlinear objects}}, in this case approximate self-similar solutions. More precisely, as for the method employed in \cite{DoKri}, the idea is to use {\it{special singular solutions}}, which are obtained by making a self-similar ansatz, to generate non-trivial global dynamics via a carefully chosen regularization and solution of a perturbative problem. In fact, the regularization destroys the scaling invariance and this turns out to be important for the ensuing perturbative argument. We observe also that the method of \cite{DoKri} grew directly out of the methods introduced in \cite{KrSchTat1}, \cite{KrSchTat2}. 
In our present context, however, we do not rely on the  spectral methods and parametrix constructions used in these references, but rather rely on the standard Strichartz and energy estimates.

In the following section we construct smooth self-similar solutions of the form 
\EQ{\label{globalQ}
u_0(t,r)=t^{-\f13} Q\big(\f{r}{t}\big), \text{\ \ either \ \ } r<t \text{\ \ or\ \ }r>t
}
by a reduction to a  nonlinear Sturm-Liouville problem, see~\eqref{ODE7}.  We solve this ODE by
contraction off of the leading linear behavior assuming smallness in~$L^\infty$. This smallness also
allows us to solve a nonlinear connection problem at an intermediate point such as $a:=\f{r}{t}=\f12$ by 
the inverse function theorem. 

As we shall see, starting with small data at $a = 0$, $Q(a)$ exhibits a singularity of the form $|1-a|^{\f23}$ near $a=1$ which
precisely fails logarithmically to belong to the scaling critical Sobolev space $\dot{H}^{\frac{7}{6}}(\R^3)$,
and its time-derivative fails logarithmically to belong to $\dot{H}^{\frac{1}{6}}(\R^3)$. This part of the construction
does not depend on super-criticality in any way. In fact, it can be carried out in other dimensions and for
other powers. In each case, the singularity will fall logarithmically outside of the scaling critical space. 
For example, in $\R^5$ for the $\dot H^2\times \dot H^1$-critical $u^5$ equation the singularity is of the form $|1-a|^{\f32}$,
whereas for the same equation in~$\R^3$ (the energy critical one), the singularity is $|1-a|^{\f12}$. 

In the second part of the construction we first glue together the two solutions residing inside and outside the light-cone,
respectively, at $r=t$ to form a continuous function $u_0(t,r)$, which decays as $r\to\infty$ at the rate~$r^{-\f13}$ (and thus
fails to lie in $\dot H^{\f76}$ at $r=\infty$); the decay~$r^{-\f13}$ is the generic one, we may also achieve~$r^{-\f43}$, but
then the time-derivative fails to belong to $\dot H^{\f16}$ at $r=\infty$. 

 We then multiply the singular components of $u_0(t,r)$  by a smooth cutoff
equal to~$1$ away from $|r-t|\le 2C$ and vanishing on $|r-t|\le C$, say. This smooth function $u_1(t,r)$ no longer solves~\eqref{NLW7},
but we show that we may add a smooth correction $v(t,r)$ to $u_1(t,r)$ so that $$u(t,r)=u_1(t,r)+v(t,r)$$ does solve~\eqref{NLW7}. This
part of the argument does crucially depend on the energy supercritical nature of the problem (although neither the exact power nor the focussing/defocussing character is relevant). 
This perturbative argument relies on an interplay between the scaling critical norm and the standard energy. We remark that
the latter restricted to $r<t$ grows like $t^{\f13}$ as $t\to\infty$ due to incoming waves.

In a final part of the paper, we re-consider the self-similar solutions on the outside of the light cone, but only in the {\it{defocussing case}}. We show that one of the parameters determining the solution near the singularity at $a=1$ can be chosen arbitrarily large, leading to rapid growth and oscillation of the solution on the set $a>1$ but near $a$. The defocussing character of the problem permits to extend these solutions all the way to $a \rightarrow +\infty$, where they again decay asymptotically like $a^{-\frac{1}{3}}$. 
We show that such a ``large self-similar solution" can be glued to a ``small self-similar solution" inside the light cone. Truncating (parts of) this continuous function to make it $C^\infty$-smooth just as before, we then show that we can construct an exact $C^\infty$ solution with just the behavior detailed in Theorem~\ref{thm:Main2}. The key to obtaining the smallness gain for the nonlinear estimates comes from choosing the time $t\geq T$ large enough. 

\smallskip

We cannot possibly do justice to the large body of work that has been devoted to studying the equation
\[
\Box u\pm |u|^{p-1}u=u_{tt}-\Delta u \pm |u|^{p-1}u =0
\]
in $\R^{3+1}$ (or other dimensions) for smooth, compactly supported data over the past 50 years. In the defocussing case, J\"orgens~\cite{Jo} showed
global existence for $p<5$, the subcritical regime. 
Struwe~\cite{Struwe} then settled the energy critical case $p=5$ radially, and Grillakis~\cite{Grill} nonradially. See the book by Shatah, Struwe~\cite{ShSt} for
an account of these developments.  
A very general method to attack energy critical problems and in particular recover the result of Struwe and Grillakis was developed recently by Kenig and Merle in \cite{KeMe0}. 
A much more quantitative approach, implying scattering and global space-time bounds explicitly in terms of the energy, but more contingent on the specific structure of the equation, was established in the work \cite{Bou} by Bourgain in the context of the energy-critical defocusing radial nonlinear Schrodinger equation. These methods were then further developed by T. Tao in \cite{Tao} to treat a ``slightly super-critical wave equation'' (where the critical nonlinearity is multiplied by a logarithmic factor). 
In this context, we also mention Struwe's recent work on energy super-critical wave equations on $\R^{2+1}$ with exponential type nonlinearities, \cite{Struwe1}, \cite{Struwe2}. Observe that all pure power nonlinear wave equations on $\R^{2+1}$ of the form $\Box u = \pm |u|^{p-1}u$, $p>1$, are energy-subcritical.\\  
Lebeau~\cite{Leb1,Leb2}  studies instability of solutions to semi-linear equations including the supercritical equations such as~\eqref{NLW7}, again in the defocussing case, relative
to weaker norms than the scaling critical ones. We remark that the self-similar solutions constructed in the following section
belong to all spaces of the form 
\[
\dot{H}^{\frac{7}{6}-\eps}(\R^3)\times \dot{H}^{\frac{1}{6}-\eps}(\R^3)
\]
with $\eps>0$ provided we restrict them to the interior of the light-cone. It is conceivable that this might allow one to obtain
aspects of the supercritical ill-posedness results as in Lebeau's work by solving backward from $t=1$ to $t=0$ inside of the cone. However, we do not pursue such matters here. 

By Strichartz theory, cf.~Lemma~\ref{lem:ST}, the equation~\eqref{NLW7} is globally well-posed for smooth compactly supported data with small critical norm (in both
the focusing and defocusing cases).  It is also locally well-posed for any data in that norm, and the solutions preserve regularity and obey the finite propagation speed. 
Kenig, Merle~\cite{KM} proved  for \eqref{NLW7} and the defocussing case that break-down of smooth solutions in finite time $T$ 
can only occur provided 
 \begin{equation}\nn
 \sup_{0<t<T} \|(u(t), u_t(t))\|_{\dot{H}^{\frac{7}{6}}(\R^3)\times \dot{H}^{\frac{1}{6}}(\R^3)} = \infty
\end{equation}
This work has generated many further developments of a similar character, see for example the recent work \cite{Bu1}. 
Bizo\'n, Maison, and Wasserman~\cite{B} establish an infinite family of smooth solutions
for the focusing supercritical equation~\eqref{NLW7} which are obtained by rescaling of a fixed
profile. In essence, these authors observe via an ODE analysis that next the to ODE blowup $c(T-t)^{-\f13}$
present in the focusing equation~\eqref{NLW7}, this equation also allows for infinitely many solutions obtained
from this one by multiplication with a time-dependent non-constant profile of the form $U(r/(T-t))$. It is shown
in \cite{B} that there exists an infinite sequence of values $U(0)$ and~$U(1)$ which give rise to a smooth solution
of~\eqref{NLW7}.   
We also mention here the works by Donninger and Sch\"orkhuber for the focusing supercritical wave equation, where they establish stability of the explicit ODE blow up solutions \cite{DoSch}. 

However, the investigations of this paper go  in a very different direction
since we are mainly concerned with the defocusing equation and global smooth solutions, as opposed to finite time blow up.

\section{Self-similar solutions}\label{sec:selfsim}

\subsection{The interior light-cone}
  We seek a solution of \eqref{NLW7} of the form $u_0(t,r)=t^{-\frac13} Q(r/t)$ for $0\le r<t$. In general, we expect these solutions to be singular at least on the light-cone, i.e., at  $\frac{r}{t} = a = 1$, and a precise description of this failure of regularity shall play a key role later on. 
To begin with, $Q$ satisfies the ODE on $0\le a<1$
\EQ{\label{ODE7}
(a^2-1)Q''(a)+\Big(\frac83a - \frac{2}{a}\Big) Q'(a)+\f49 Q(a) \pm  Q(a)^7=0
}
The natural initial conditions at $a=0$ are
\EQ{\label{init}
Q(0)=q_0>0, \quad Q'(0)=0
}
We shall first solve this initial value problem on the interval $0\le a\le 1/2$ which leads to a $1$-parameter
family of solutions.  We then solve
the nonlinear connection problem at $a=1/2$ with a $2$-parameter family of solutions on the interval $(1/2,1)$.
The two parameters are important, since they allows us to apply the inverse function theorem.  

\begin{lem}\label{lem:smalla}
There exists $\eps>0$ small such that  for any $0\le q_0\le \eps$ the equation~\eqref{ODE7} admits a unique smooth solution on $[0,1/2]$ 
with initial conditions~\eqref{init}. 
Moreover, 
\EQ{ \label{Q1/2}
Q(1/2) &=q_0Q_0(1/2)+O(q_0^7) \\
Q'(1/2) &=q_0Q_0'(1/2)+O(q_0^7)
}
and the solution extends as a smooth even function the the interval $[-1/2,1/2]$. 
\end{lem}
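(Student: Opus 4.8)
The plan is to convert the singular initial value problem \eqref{ODE7}--\eqref{init} into a contractive fixed point problem on $C([0,\tfrac12])$. The point $a=0$ is a regular singular point of \eqref{ODE7}: near it the principal part is the three--dimensional radial Laplacian $\partial_a^2+\tfrac2a\partial_a$, whose indicial exponents are $0$ and $-1$, and the Cauchy data \eqref{init} select the regular (in fact even) branch. Concretely, using $(a^2-1)Q''=a^2Q''-Q''$ and $Q''+\tfrac2aQ'=\tfrac1{a^2}(a^2Q')'$, I would rewrite \eqref{ODE7} as
\[
\tfrac1{a^2}\big(a^2Q'\big)'=a^2Q''+\tfrac83 aQ'+\tfrac49 Q\pm Q^7,
\]
integrate once against $s^2\,ds$ on $[0,a]$, and integrate by parts to eliminate the $s^2Q''$ and $\tfrac83 sQ'$ terms. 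This produces an equivalent integral equation of the schematic form
\[
Q(a)=q_0+\int_0^a\frac{1}{t^2(1-t^2)}\Big[c_1\,t^3Q(t)+c_2\int_0^t s^2Q(s)\,ds\pm\int_0^t s^2Q(s)^7\,ds\Big]\,dt=:\Phi[Q](a),
\]
with explicit constants $c_1,c_2$; on $[0,\tfrac12]$ the factor $(1-t^2)^{-1}$ is harmless, and the division by $t^2$ causes no trouble because the bracket vanishes like $t^3$ at $t=0$ (one checks directly that $\Phi[Q](0)=q_0$ and $(\Phi[Q])'(0)=0$ automatically --- this is how $Q'(0)=0$ gets built in, and a continuous fixed point is automatically $C^2$ and solves \eqref{ODE7}). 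An alternative is to substitute $Q=w/a$ and work with the genuinely regular ODE for $w$, the only new feature being an $a^{-6}w^7$ term that is controlled by $w=O(q_0a)$.

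The key point is that the \emph{linear} integral operator $L$ appearing in $\Phi$ (the one carrying the $c_1,c_2$ terms) has operator norm $\le\tfrac12$ on $C([0,\tfrac12])$, since each application gains a factor $\lesssim a^2\le\tfrac14$ from the outer integration over $[0,a]\subseteq[0,\tfrac12]$. Hence the affine map $Q\mapsto 1+L[Q]$ is a contraction, and its fixed point is the solution $Q_0$ of the linearized equation $\mathcal L_0Q_0:=(a^2-1)Q_0''+(\tfrac83 a-\tfrac2a)Q_0'+\tfrac49 Q_0=0$ with $Q_0(0)=1$ (whence $Q_0'(0)=0$); this is the profile appearing in \eqref{Q1/2}, and it is smooth on $[0,\tfrac12]$. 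For the nonlinear problem write $Q=q_0Q_0+R$; then $R$ solves $\mathcal L_0R=\mp(q_0Q_0+R)^7$ with $R(0)=R'(0)=0$, i.e.\ a fixed point equation $R=\mathcal T[R]$ of the same type but with forcing $\mp(q_0Q_0+R)^7$. On the ball $\{\,\|R\|_{L^\infty[0,1/2]}\le Cq_0^7\,\}$, for a suitable absolute $C$ and $\eps$ small, $\mathcal T$ maps the ball into itself --- the forcing is $O(q_0^7)$ once $\|R\|_\infty\le q_0$, and $\|L\|\le\tfrac12$ --- and it is a contraction because the nonlinearity has Lipschitz constant $O(q_0^6)$ there. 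Banach's theorem gives the unique such $R$, hence a solution $Q=q_0Q_0+R$ of \eqref{ODE7}--\eqref{init} on $[0,\tfrac12]$. Since $Q$ stays $O(q_0)$-close to the bounded profile $q_0Q_0$, the nonlinearity $Q^7=O(q_0^7)$ never forces a blow-up before $a=\tfrac12$, so the solution is genuinely defined on the whole interval.

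The remaining assertions follow easily. Regularity: $R\in C([0,\tfrac12])$ solving the integral equation is in $C^1$, and bootstrapping through \eqref{ODE7} (a regular ODE on $(0,\tfrac12]$) gives $R\in C^\infty((0,\tfrac12])$, while the explicit formula for $R'$ together with the indicial structure gives smoothness up to $0$. Evenness and the stated extension: every Picard iterate $\Phi^{(n)}[q_0]$ is an even function --- all $a$-dependence enters through even combinations of $a$ --- so $Q$ is even; since \eqref{ODE7} is invariant under $a\mapsto -a$ within even functions, the even extension $a\mapsto Q(|a|)$ is a $C^\infty$ solution on $[-\tfrac12,\tfrac12]$. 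Uniqueness: any smooth solution of \eqref{ODE7}--\eqref{init} yields $R:=Q-q_0Q_0$ smooth with $R(0)=R'(0)=0$, hence (integrating the equation) $R=\mathcal T[R]$; after shrinking $\eps$ so that such an $R$ necessarily lies in the ball above, it must be the fixed point. Asymptotics \eqref{Q1/2}: $\|R\|_{L^\infty[0,1/2]}=O(q_0^7)$ gives $Q(1/2)=q_0Q_0(1/2)+O(q_0^7)$ at once, and feeding $\|R\|_\infty,\|(q_0Q_0+R)^7\|_\infty=O(q_0^7)$ back into the integral formula for $R'$ yields $\|R'\|_{L^\infty[0,1/2]}=O(q_0^7)$, hence $Q'(1/2)=q_0Q_0'(1/2)+O(q_0^7)$.

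The one genuinely delicate step is the treatment of the regular singular point $a=0$: the inverse of $\partial_a^2+\tfrac2a\partial_a$ must be realized as a map that (i) lands in $C([0,\tfrac12])$, with the $t^{-2}$ beaten by the $t^3$-vanishing of the bracket, (ii) encodes $Q'(0)=0$ automatically, and (iii) has operator norm $<1$ on the full interval $[0,\tfrac12]$, so the contraction closes in one step rather than only near $a=0$. Once this integral equation is in place, the fixed point argument, the regularity bootstrap, the evenness, and the $O(q_0^7)$ asymptotics are all routine.
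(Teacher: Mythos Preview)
Your argument is correct, but it proceeds along a different line from the paper. The paper exploits the \emph{explicit} fundamental system
\[
\varphi_1(a)=a^{-1}(1-a)^{2/3},\qquad \varphi_2(a)=a^{-1}(1+a)^{2/3}
\]
of the linearized operator, writes down the Green function, and identifies $Q_0(a)=\tfrac34(\varphi_2-\varphi_1)$ in closed form. The Volterra equation is then $Q=q_0Q_0\pm\int_0^a G(a,b)Q(b)^7\,db$, contracted in a space carrying a weighted bound $|h(a)|\le q_0^6a^2$. Higher regularity is obtained by differentiating the kernel and iterating. Your route instead inverts only the radial Laplacian $\partial_a^2+\tfrac2a\partial_a$ and absorbs the remaining linear terms into the fixed-point map; you never need to know $\varphi_1,\varphi_2$ explicitly, which makes the argument more portable, but you pay for this by having to check a quantitative operator-norm bound.

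That is the one point where your write-up is too casual. You assert $\|L\|\le\tfrac12$ ``since each application gains a factor $\lesssim a^2\le\tfrac14$'', leaving the constants $c_1,c_2$ unspecified. Carrying out the integration by parts actually gives $c_1=-\tfrac43$, $c_2=\tfrac{40}{9}$, and then
\[
\|L\|_{C[0,1/2]\to C[0,1/2]}\;\le\;\frac{76}{27}\int_0^{1/2}\frac{t}{1-t^2}\,dt\;=\;\frac{38}{27}\ln\tfrac43\;\approx\;0.40,
\]
so the claim is true, but it is not a formality --- had the coefficients in the ODE been slightly different the bound could fail on the full interval $[0,\tfrac12]$, and your argument would then only close on a short interval near $0$. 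You should display this computation. Your uniqueness argument (``after shrinking $\eps$ so that such an $R$ necessarily lies in the ball'') is also a bit circular as written; cleaner is to note that any continuous $Q$ with $Q(0)=q_0$ solving the integral equation is unique on a short interval $[0,\delta]$ by contraction, and then invoke standard ODE uniqueness on $[\delta,\tfrac12]$.
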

\begin{proof}
The associated homogeneous linear equation is 
\EQ{\label{ODE}
(a^2-1)Q''(a)+\Big(\frac83a - \frac{2}{a}\Big) Q'(a)+\f49 Q(a) =0
}
with fundamental system
\EQ{\label{fy}
\fy_1(a)=a^{-1}(1-a)^{\f23},\quad \fy_2(a)=a^{-1}(1+a)^{\f23}
}
Define the Green function for $0<b<a<1$: 
\EQ{\label{Green}
G(a,b) &:= \frac{\fy_1(a)\fy_2(b)- \fy_1(b)\fy_2(a)}{W(b) (b^2-1)} \\
W(a) &:=  \fy_1(a)\fy_2'(a) - \fy_1'(a)\fy_2(a)
}
It has the property that the inhomogeneous equation
\EQ{\label{ODEf}
&(a^2-1)\fy''(a)+\Big(\frac83a - \frac{2}{a}\Big) \fy'(a)+\f49 \fy(a) =f(a)\\
& \fy(0)=0, \; \fy'(0)=0
}
is solved by 
\[
\fy(a) = -\int_0^a G(a,b)f(b)\, db
\]
We therefore seek a solution of \eqref{ODE7} on $0\le a\le \f12$ with initial conditions~\eqref{init} of the form
\EQ{\label{Volt}
Q(a) &= \f34 q_0(\fy_1(a)-\fy_2(a)) \pm \int_0^a G(a,b) Q(b)^7\, db
}
Note that $Q_0(a):=\f34(\fy_2(a)-\fy_1(a))$ is analytic and even around $a=0$.  Moreover, $Q_0(0)=1$. 
Assume $0\le q_0\le \eps$ and define the space
\[
X_{q_0}:=q_0 Q_0 + \{  h(a)\mid h\in C^2([0,1/2]), \; \|h\|_{C^2}\le q_0^6, \; |h(a)|\le q_0^6 a^2\}
\]
We equip the linear  space defined by the set on the right-hand side with the norm 
\[
\|h\|_{C^2} + \sup_{0<a<\f12} a^{-2}|h(a)|
\]
Our main claim is as follows: {\em there exists $\eps>0$ small such that  for any $0\le q_0\le \eps$ the equation \eqref{Volt} has a
unique solution in $X_{q_0}$. }

By explicit calculation,
\EQ{\label{W}
W(a) = \f{4}{3a^2}(1-a^2)^{-\f13}
}
and $(W(b)(b^2-1))^{-1}$ is analytic on $(-1, 1)$ with expansion
\[
-\f34b^2-\f12b^4+O(b^6)
\]
as
 $b\to0$. Second, for $0<b<a$, 
 \[
 G(a,b) = \f{b}{a} (-\f34 + O(b^2)) \big[ (1-a)^{\f23} (1+b)^{\f23} -  (1-b)^{\f23} (1+a)^{\f23}   \big]
 \]
whence in particular 
$|G(a,b)|\le Cb$ for all $0<a\le\f12$.  Moreover, setting $b=ua$ with $0<u<1$ shows that
\[
\tilde G(a,u) := G(a,au) 
\]
is smooth in $|a|<1$ and $|u|<1$ and satisfies the bound
\[
\max_{|u|\le 1} \tilde G(a,u) \le C|a|. 
\]
Therefore,  
\EQ{\label{Gab}
\int_0^a |G(a,b)|\, db = a \int_0^1 |\tilde G(a,u)|\, du \le Ca^2
}
Define
\[
(Tf)(a):=q_0 Q_0(a) \pm \int_0^a G(a,b) f(b)^7\, db= q_0 Q_0(a) \pm a\int_0^1 \tilde G(a,u) f(au)^7\, du
\]
We claim that $T$ is a contraction in $X_{q_0}$ and therefore  has a fixed point $f\in X_{q_0}$.
Any $f\in X_{q_0}$ satisfies $|f(a)|\le Mq_0$ for all $0\le a\le \f12$ where $M$ is some absolute constant. Thus, 
\[
h(a):= \int_0^a G(a,b) f(b)^7\, db
\]
satisfies by \eqref{Gab}
\[
|h(a)|\le CM^7a^2 q_0^7 \ll q_0^6\, a^2, \; |h'(a)|\le CM^7a q_0^7 \ll q_0^6\, a
\]
as well as 
\[
|h''(a)|\le CM^7  q_0^7 \ll q_0^6
\]
provided $q_0$ is small. Hence, $T:X_{q_0}\to X_{q_0}$.  For the contraction, we estimate
\EQ{\nn
\| Tf-Tg\|_{X_{q_0}} &\le C(\|f\|_\infty + \|g\|_\infty)^6\|f-g\|_\infty \\
&\le CM^6 q_0^6 \|f-g\|_{X_{q_0}}
}
For $q_0$ small this implies that $T$ is a contraction and we are done with our main claim.
We note from the integral equation that $f$ is even on $[-1/2,1/2]$. 

As for the higher regularity, this of course follows form standard regularity results. We proceed by 
induction in the number of derivatives. Starting from the integral equation
\[
f(a)=q_0 Q_0(a) \pm a\int_0^1 \tilde G(a,u) f(au)^7\, du
\]
we observe that 
\EQ{\label{eq:k}
f^{(k)}(a)= q_0 Q_0^{(k)}(a) +H_k(a)\pm 7a\int_0^1 \tilde G(a,u) f(au)^6 f^{(k)}(au)u^k\, du
}
for any integer $k\ge0$ where $H_k$ is smooth; one has $H_0=0$ and 
\[
\pm H_1(a)= \int_0^1 \tilde G(a,u) f(au)^7\, du + a\int_0^1 \tilde G_a(a,u) f(au)^7\, du
\]
and so forth. Clearly, $H_k$ only involves $k-1$ derivatives of~$f$ and is therefore small in the norm of
continuous function on the interval $[0,1/2]$ by the inductive assumption. We can therefore contract~\eqref{eq:k}
to produce a continuous small solution~$f^{(k)}(a)$ on $[0,1/2]$. This shows that $f$ possesses any number of
derivatives. 
\end{proof}

The solution is in fact analytic. We remark that one can also solve~\eqref{ODE7} near $a=0$ (and thus
also on $[0,1/2)$) by power series. Writing the usual iteration for the coefficients shows that they are all positive. This
is a reflection of the defocusing nature of~\eqref{NLW7}. 
Thus, the solution  is monotone increasing together with all derivatives.  We have chosen to use the Green function since the nonlinear
recursion is not entirely elementary. 
Next, we solve backwards starting from $a=1$. 

\begin{lem}\label{lem:near1}
Given $q_1,q_2\in (-\eps,\eps)$ there exists a unique solution $Q(a)$ of \eqref{ODE7}  on $[\f12,1)$ of the form
\EQ{ \label{anear1}
Q(a) & =  (1-a)^{\f23} Q_1(a) +  Q_2(a) + (1-a)^{\f73} Q_3(a)
}
with $Q_1, Q_2, Q_3\in C^\infty([\f12,1])$ and 
\EQ{\nn
Q_1(a) &= q_1(1+O(1-a)),\;\; Q_2(a)= q_2(1+O(1-a)),\\
Q_{3}(a) &=(|q_1|^7+|q_2|^7)O(1)
}
where the $O(\cdot)$ terms are smooth functions in~$a\in [1/2,1]$. 
Finally,
\EQ{\label{IMP}
Q(1/2) &= q_1 \fy_1(1/2) + q_2 2^{-\f23} \fy_2(1/2) + O(|q_1|^7+|q_2|^7)\\
Q'(1/2) &= q_1 \fy_1'(1/2) + q_2 2^{-\f23} \fy_2'(1/2) + O(|q_1|^7+|q_2|^7)
}
where $\fy_1,\fy_2$ are the functions from~\eqref{fy}. 
\end{lem}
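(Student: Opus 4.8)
The plan is to solve the ODE~\eqref{ODE7} near the singular point $a=1$ by a Frobenius-type analysis followed by a contraction argument, then propagate the solution backwards to $a=1/2$. First I would examine the indicial structure of the linear operator at $a=1$. Writing $s=1-a$, the linear part $(a^2-1)Q''+(\frac83 a-\frac2a)Q'+\frac49 Q$ becomes, to leading order near $s=0$, an Euler-type operator whose indicial roots are $0$ and $\frac23$ (consistent with the fundamental system $\fy_1(a)=a^{-1}(1-a)^{2/3}$, $\fy_2(a)=a^{-1}(1+a)^{2/3}$ from~\eqref{fy}: note $\fy_2$ is smooth at $a=1$ while $\fy_1$ carries the $(1-a)^{2/3}$ branch). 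So the general solution of the linear problem near $a=1$ is a combination of a smooth branch and a $(1-a)^{2/3}$ branch, and the two free constants are exactly $q_1$ (coefficient of the singular branch) and $q_2$ (coefficient of the smooth branch). The nonlinearity $\pm Q^7$, applied to a function of the form $q_1(1-a)^{2/3}+q_2+\dots$, produces terms of the type $(1-a)^{2k/3}$, $k\ge0$; the lowest non-integer, non-$\{0,2/3\}$ power generated is $(1-a)^{4/3}$ and then $(1-a)^{7/3}$ after one application of the Green operator (which raises the power by the "gap" $2$ in $s$, roughly), which motivates the ansatz~\eqref{anear1} with the three pieces $(1-a)^{2/3}Q_1$, $Q_2$, $(1-a)^{7/3}Q_3$.

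Next I would set up the fixed-point scheme. Using variation of parameters based on $\fy_1,\fy_2$, I would write an integral equation for $Q$ on an interval $[1-\delta,1)$ of the schematic form $Q = q_1\fy_1 + q_2 2^{-2/3}\fy_2 + \mathcal{K}[Q^7]$, where $\mathcal{K}$ is the Green operator for~\eqref{ODE} anchored at $a=1$ (so that the boundary behaviour of the homogeneous part is exactly $q_1,q_2$). The key structural point is that $\mathcal{K}$ gains regularity/decay in $s=1-a$: applied to a function behaving like $s^{\beta}$ it produces something like $s^{\beta+2}$ modulo the homogeneous solutions, except that when $\beta$ hits one of the resonant exponents we must be careful. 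I would define the function space as an affine space around $q_1\fy_1+q_22^{-2/3}\fy_2$ consisting of functions of the form~\eqref{anear1} with $Q_1-q_1, Q_2-q_2, Q_3$ all $O(|q_1|^7+|q_2|^7)$ in an appropriate weighted $C^k$ norm on $[1-\delta,1]$, mirroring the space $X_{q_0}$ from Lemma~\ref{lem:smalla}. The smallness of $q_1,q_2$ (both in $(-\eps,\eps)$) makes the map $Q\mapsto q_1\fy_1+q_22^{-2/3}\fy_2+\mathcal{K}[Q^7]$ a contraction: the nonlinear term is $O((|q_1|+|q_2|)^7)$ and its Lipschitz constant is $O((|q_1|+|q_2|)^6)$. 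This yields a unique solution on $[1-\delta,1)$ of the stated form. Then, since the ODE~\eqref{ODE7} is regular (non-degenerate, as $a^2-1\ne0$ and the coefficient $\frac83 a-\frac2a$ is smooth) on the compact interval $[1/2,1-\delta]$, I extend the solution backwards to $a=1/2$ by the standard existence/uniqueness theory for ODEs; smallness of $q_1,q_2$ guarantees the solution stays in the regime where the nonlinearity is harmless and no blow-up occurs before $a=1/2$. Evaluating the integral equation (the $\mathcal K[Q^7]$ term) at $a=1/2$ gives the contributions $O(|q_1|^7+|q_2|^7)$ in~\eqref{IMP}, and the linear part contributes $q_1\fy_1(1/2)+q_2 2^{-2/3}\fy_2(1/2)$ and its derivative; the higher-regularity (smoothness of $Q_1,Q_2,Q_3$) follows by an induction on the number of derivatives exactly as in the proof of Lemma~\ref{lem:smalla}, differentiating the integral equation and contracting again.

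The main obstacle is the bookkeeping of the resonant exponents in the Frobenius/Green-operator analysis near $a=1$: one must verify that $\mathcal K$ applied to the expansion of $Q^7$ does not generate, e.g., a $\log(1-a)$ term (which would happen if an integer or one of the resonant exponents $\{0,2/3\}$ were hit by the forcing in the "wrong" way) and that the ansatz~\eqref{anear1} with just three terms is genuinely closed under the iteration — i.e., that all further powers produced by $Q^7$ and $\mathcal K$ are absorbed into the smooth factors $Q_1,Q_2,Q_3$ and their $O(1-a)$ corrections. Concretely one checks that $Q^7$ with $Q$ of the form~\eqref{anear1} expands in powers $(1-a)^{j/3}$ with the singular content organized so that after applying $\mathcal K$ everything lands in $(1-a)^{2/3}C^\infty + C^\infty + (1-a)^{7/3}C^\infty$; the gain of two powers of $(1-a)$ from $\mathcal K$ and the fact that $7\cdot\frac23 = \frac{14}{3} > \frac73$ make this work, but the precise exponent accounting is the delicate part. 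Everything else — the contraction, the backward continuation to $a=1/2$, and the evaluation~\eqref{IMP} — is routine once the local structure at $a=1$ is pinned down.
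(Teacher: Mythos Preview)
Your approach is essentially the same as the paper's: convert \eqref{ODE7} to an integral equation using the Green function built from $\fy_1,\fy_2$, anchored at $a=1$, and contract around the linear seed $q_1\fy_1+q_2 2^{-2/3}\fy_2$. The closure observation you flag as ``the delicate part'' is exactly what the paper makes explicit.

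There are two presentational differences worth noting. First, the paper does not split into a local step on $[1-\delta,1)$ followed by backward continuation; instead it writes $Q^7$ in the form $\sum_{j=0}^{2}(1-a)^{2j/3}N_j(a,Q_1,Q_2,Q_3)$ with each $N_j$ a polynomial in $Q_1,Q_2,Q_3$ with smooth coefficients, and then sets up a coupled system of three integral equations for $Q_1,Q_2,Q_3$ themselves, contracting directly in $C^0([1/2,1])^3$. This is cleaner than your two-step scheme, because the decomposition \eqref{anear1} is non-unique away from $a=1$, and your backward continuation would require a separate argument to show that the particular smooth representatives $Q_1,Q_2,Q_3$ obtained near $a=1$ extend to smooth functions on all of $[1/2,1]$ with the stated bounds. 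Second, the Green operator gains one power of $(1-a)$, not two (the equation has a first-order degeneracy at $a=1$ since $a^2-1$ vanishes to first order); your own example $(1-a)^{4/3}\mapsto(1-a)^{7/3}$ reflects this correctly, so the ``gap $2$'' remark is just a slip. The paper records this as the two integration facts $\int_a^1 G(a,b)\omega(b)\,db=O(1-a)$ and $\int_a^1 G(a,b)(1-b)^{2k/3}\omega(b)\,db=(1-a)^{2k/3}O(1-a)$ for $k=1,2$, which simultaneously show that no logarithms appear and that the three-term ansatz is closed under iteration.
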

\begin{proof}
We convert the ODE to the following integral equation
\EQ{\label{Volt1}
Q(a) &=  q_1 \fy_1(a) + q_2 2^{-\f23} \fy_2(a) \mp \int_a^1 G(a,b) Q(b)^7\, db
}
where $G$ is the Green function from~\eqref{Green}. Since in this case $a<b<1$, the integral comes with a  negative sign.  

By inspection, $(1-a)^{-\f23}\fy_1(a)=1/a$, $2^{-\f23} \fy_2(a)=((1+a)/2)^{\f23}/a$ are analytic on $a>0$, and equal~$1$ at $a=1$.  
Furthermore, taking the Wronskian~\eqref{W} into account, the Green function \eqref{Green} is of the form
\EQ{\label{Gab2}
G(a,b)= g_1(a,b) + (1-b)^{\f23} (1-a)^{-\f23} g_2(a,b)
}
where $g_1, g_2$ are smooth on $a,b\in [1/2,1]$.  If $\omega(b)$ is smooth on $[1/2,1]$, then
\EQ{\label{int0}
\int_a^1 G(a,b)\omega(b)\, db = O(1-a)
}
is smooth on $[1/2,1]$, as well as 
\EQ{\label{int12}
\int_a^1 G(a,b)\omega(b)(1-b)^{\f{2k}{3}} \, db = (1-a)^{\f{2k}{3}} O(1-a),\quad k=1,2
}
where $O(1-a)$ is a smooth function of $a\in [1/2,1]$. 
This allows one to convert \eqref{Volt1} into a system for $Q_1, Q_2,Q_3$ which
we again solve by contraction.  To be specific,
\EQ{\nn
[(1-a)^{\f23} Q_1(a) +  Q_2(a)+ (1-a)^{\f73} Q_3(a)]^7 &= \sum_{j=0}^2 (1-a)^{\f{2j}{3}} N_j(a,Q_1, Q_2,Q_3)
}
Here each $N_j(a,Q_1, Q_2,Q_3)$ is a linear combination  of terms of the form
\[
(1-a)^m Q_1^{k_1}(a) Q_2^{k_2}(a) Q_3^{k_3}(a)
\]
where $k_1+k_2+k_3=7$. In particular, if all $Q_i$ are smooth, then $N_j$ is, too.   For example, $N_2$ contains the term $21Q^2_1(a) Q^5_2(a)$.  We remark that in each $N_{j}$ the function $Q_{3}$ appears with a factor of at least $(1-a)$.
For example, the term
\EQ{\label{Q3 besser}
7  Q_2^{6}(a)(1-a)^{\f73} Q_3(a) = (1-a)^{{\f43}} 7 (1-a) Q_2^{6}(a) Q_3(a)  
}
contributes $7 (1-a) Q_2^{6}(a) Q_3(a) $ to $N_{2}$. 
We now solve for $Q_j$ in the following form
\EQ{\label{drei sys}
Q_1(a) &=   q_1 a^{-1}  \mp (1-a)^{-\f23}  \int_a^1 G(a,b) (1-b)^{\f23} N_1(b,Q_1, Q_2,Q_3) \, db \\
Q_2(a) &= q_2 2^{-\f23} \fy_2(a) \mp \int_a^1 G(a,b) N_0(b,Q_1, Q_2,Q_3)\, db \\
Q_3(a) &= \mp (1-a)^{-\f73} \int_a^1 G(a,b)  (1-b)^{\f43} N_2(b,Q_1, Q_2,Q_3)\, db
}
By \eqref{int0}, \eqref{int12}  the right-hand sides are smooth if the $Q_{j}$ are.  We write the system~\eqref{drei sys}
in the fixed-point form $\vec Q= T(\vec Q)$ where $T$ denotes the column vector of the right-hand sides and $\vec Q:=(Q_{1},
Q_{2}, Q_{3})$. 

We set up a contraction for $T$ in the space of continuous functions on the interval $[1/2,1]$.  For $\eps>0$ we
find a unique solution  of the form 
\EQ{\nn 
Q_{1}(a) &= q_{1}a^{-1} + (|q_{1}|+|q_{2}|)^{7} (1-a) R_{1}(a) \\
Q_{2}(a) &= q_2 2^{-\f23} \fy_2(a) + (|q_{1}|+|q_{2}|)^{7} (1-a) R_{2}(a) \\
Q_{3}(a) &=   (|q_{1}|+|q_{2}|)^{7} R_{3}(a)
}
where $R_{j}$ are continuous and satisfy $|R_{j}(a)|\le M$  on $[1/2,1]$, where $M$ is some absolute constant.

Inserting these representations into \eqref{drei sys} implies  that we  gain at least one degree of regularity at $a=1$, in 
other words, one factor of~$(1-a)$. For the terms involving $R_1, R_2$ this is clear, since each application of the integration in~\eqref{drei sys}
gains a factor of~$(1-a)$.  On the other hand, for the $Q_{3}$ term we need to use the observation~\eqref{Q3 besser}, i.e., 
the fact that $Q_{3}$ carries at least a factor of $(1-a)$ when
reinserted into the nonlinearity $N_{2}$. Repeating this procedure produces more and more smoothness at $a=1$. 
The smoothness for $1/2\le a<1$ is clear.   
\end{proof}

We can now solve \eqref{ODE7} and thus obtain the special self-similar solutions of~\eqref{NLW7}.  
The following corollary shows that such solutions (nonzero of course), necessarily exhibit the $(1-a)^{\f23}$ singularity at~$a=1$. 

\begin{cor}
\label{cor:1}
For any small $q_0$ the ODE \eqref{ODE7} has a unique $C^2$ solution $Q(a)$ on $[0,1)$ with $Q(0)=q_0$ and $Q'(0)=0$.
This solution is of the form~\eqref{anear1} near $a=1$.  We can have neither $q_1=0$ nor $q_2=0$. 
\end{cor}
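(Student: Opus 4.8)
The plan is to combine Lemma \ref{lem:smalla} and Lemma \ref{lem:near1} via the inverse function theorem at the matching point $a=1/2$. First, for small $q_0$, Lemma \ref{lem:smalla} produces a unique smooth solution on $[0,1/2]$ with $Q(0)=q_0$, $Q'(0)=0$, and records the boundary data $(Q(1/2),Q'(1/2)) = q_0(Q_0(1/2),Q_0'(1/2)) + O(q_0^7)$. On the other side, Lemma \ref{lem:near1} produces, for each $(q_1,q_2)\in(-\eps,\eps)^2$, a unique solution on $[1/2,1)$ of the form \eqref{anear1}, with boundary data at $a=1/2$ given by \eqref{IMP}, namely the map
\[
\Phi(q_1,q_2) := \big(q_1\fy_1(1/2) + q_2 2^{-2/3}\fy_2(1/2),\ q_1\fy_1'(1/2) + q_2 2^{-2/3}\fy_2'(1/2)\big) + O(|q_1|^7+|q_2|^7).
\]
To glue the two pieces into a single $C^2$ (in fact $C^\infty$, by interior ODE regularity away from $a=1$) solution on $[0,1)$, I must solve $\Phi(q_1,q_2) = (Q(1/2),Q'(1/2))$ for $(q_1,q_2)$.

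The key point is that the linear part of $\Phi$ is the matrix whose columns are $(\fy_1(1/2),\fy_1'(1/2))$ and $2^{-2/3}(\fy_2(1/2),\fy_2'(1/2))$; its determinant is (a nonzero multiple of) the Wronskian $W(1/2)$ of the fundamental system \eqref{fy}, which by \eqref{W} equals $\tfrac{4}{3(1/2)^2}(1-1/4)^{-1/3}\neq 0$. Hence $D\Phi(0,0)$ is invertible, and since the remainder is $O(|q_1|^7+|q_2|^7)$ — higher order — the inverse function theorem gives a smooth local inverse near the origin. Applying it to the right-hand side $q_0(Q_0(1/2),Q_0'(1/2)) + O(q_0^7)$, which tends to $0$ as $q_0\to0$, yields unique $(q_1,q_2) = q_0 \cdot (\text{const}) + O(q_0^7)$ with the prescribed boundary data, provided $q_0$ is small enough that the image lies in the domain of the local inverse. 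This produces the desired solution, and its uniqueness as a $C^2$ solution on $[0,1)$ with $Q(0)=q_0$, $Q'(0)=0$ follows from uniqueness in each of the two lemmas together with uniqueness of the matching parameters; the form \eqref{anear1} near $a=1$ is exactly the conclusion of Lemma \ref{lem:near1}.

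It remains to show that neither $q_1$ nor $q_2$ can vanish. Here the explicit leading constants matter: from the inverse function theorem the solution of the connection problem satisfies $q_i = q_0\, c_i + O(q_0^7)$ where $(c_1,c_2)$ is the unique solution of the \emph{linear} system
\[
c_1\fy_1(1/2) + c_2 2^{-2/3}\fy_2(1/2) = Q_0(1/2),\qquad c_1\fy_1'(1/2) + c_2 2^{-2/3}\fy_2'(1/2) = Q_0'(1/2),
\]
i.e. $(c_1,c_2)$ expresses the linear profile $Q_0 = \tfrac34(\fy_2-\fy_1)$ in the basis $\{\fy_1,\fy_2\}$ — so $c_1 = -\tfrac34$ and $c_2 = \tfrac34 2^{2/3}$ up to the normalization, both nonzero. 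Hence $q_i = q_0 c_i + O(q_0^7) \neq 0$ for small $q_0\neq 0$. The main obstacle in writing this carefully is bookkeeping: tracking the $2^{-2/3}$ normalizations and verifying that the nonvanishing of the relevant Wronskian/determinant really is what makes $D\Phi(0,0)$ invertible, and confirming that $c_1,c_2\neq 0$ from the fact that $\fy_1$ and $\fy_2$ are linearly independent so that $Q_0$ has nonzero components along both. One should also note that $Q_0(1/2)$ and $Q_0'(1/2)$ are not both zero (clear since $Q_0$ solves a second-order linear ODE and $Q_0(0)=1$), which guarantees $(c_1,c_2)\neq(0,0)$; combined with the basis argument this gives the stronger statement that \emph{each} component is nonzero.
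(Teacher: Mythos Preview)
Your proof is correct and follows essentially the same approach as the paper: both combine Lemmas~\ref{lem:smalla} and~\ref{lem:near1} via the inverse function theorem at $a=1/2$, using nonvanishing of the Wronskian $W(1/2)$ to verify invertibility of the differential. For the final claim that $q_1,q_2\ne0$, the paper argues more tersely that setting either parameter to zero would force the solution from Lemma~\ref{lem:near1} to be (to leading order) a multiple of a single $\fy_j$, contradicting that the solution from Lemma~\ref{lem:smalla} has leading part $q_0Q_0=\tfrac34 q_0(\fy_2-\fy_1)$; your explicit computation of $(c_1,c_2)=(-\tfrac34,\tfrac34\cdot2^{2/3})$ is just the quantitative version of that same observation.
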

\begin{proof}
To prove this, let $Q$ be the solution for given small $q_0$ as generated by Lemma~\ref{lem:smalla}.
By the inverse function theorem, we may find $q_1,q_2$ small so that \eqref{IMP} matches the values given by~\eqref{Q1/2}.
The application of the inverse function theorem is justified since the derivative in $q_1, q_2$ at $(q_1,q_2)=0$ of~\eqref{IMP} is the Wronskian of $\fy_1,\fy_2$,
which does not vanish.  The final claim is seen for the same reason: we cannot achieve linear dependence of the
solutions generated by Lemmas~\ref{lem:smalla} and~\ref{lem:near1} when either $q_1=0$ or $q_2=0$. 
\end{proof}

In particular, these solutions logarithmically  fail to belong to $\dot H^{\f76}(\R^3)$. 

\subsection{The exterior light-cone}
We next carry out a similar construction in the region $r>t$.   Here $a=r/t>1$, but the analysis is essentially the same. We
begin of the analogue of Lemma~\ref{lem:near1}.  

\begin{lem}\label{lem:near2}
Given $\tilde q_1, \tilde q_2\in (-\eps,\eps)$ there exists a unique solution $Q(a)$ of \eqref{ODE7}  on $(1,2]$ of the form
\EQ{ \label{anear2}
Q(a) & =  (a-1)^{\f23} \tilde Q_1(a) +  \tilde Q_2(a) + (a-1)^{\f73} \tilde Q_3(a)
}
with $\tilde Q_1, \tilde Q_2, \tilde Q_3\in C^\infty((1,2])$ and 
\EQ{\nn
\tilde Q_1(a) &= \tilde q_1(1+O(a-1)),\;\; \tilde Q_2(a)= \tilde q_2(1+O(a-1)),\\
\tilde Q_{3}(a) &=(|\tilde q_1|^7+|\tilde q_2|^7)O(1)
}
where the $O(\cdot)$ terms are smooth functions in~$a\in [1,2]$. 
Finally,
\EQ{\label{IMP2}
Q(2) &= \tilde q_1 \tilde \fy_1(2) + \tilde q_2 2^{-\f23} \fy_2(2) + O(|\tilde q_1|^7+|\tilde q_2|^7)\\
Q'(2) &= \tilde q_1 \tilde \fy_1'(2) + \tilde q_2 2^{-\f23} \fy_2'(2) + O(|\tilde q_1|^7+|\tilde q_2|^7)
}
where $\tilde \fy_1(a)=a^{-1}(a-1)^{\f23}$ and $\fy_2$ is as in~\eqref{fy}. 
\end{lem}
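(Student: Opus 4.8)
The plan is to mirror the proof of Lemma~\ref{lem:near1} almost verbatim, with $1-a$ replaced by $a-1$ throughout, and to exploit the fact that in the region $a>1$ the coefficient $a^2-1$ of the ODE~\eqref{ODE7} remains nonvanishing on $(1,2]$ and has the same qualitative structure near $a=1$ as it did approaching $a=1$ from the left. First I would record the fundamental system of the homogeneous equation~\eqref{ODE} on $a>1$, namely $\tilde\fy_1(a)=a^{-1}(a-1)^{\f23}$ and $\fy_2(a)=a^{-1}(a+1)^{\f23}$, compute the Wronskian (which differs from~\eqref{W} only by the obvious sign/modulus change, $W(a)=\tfrac{4}{3a^2}(a^2-1)^{-\f13}$ up to a constant), and write down the Green function $G(a,b)$ for $1<a<b\le 2$. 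The key structural fact, the analogue of~\eqref{Gab2}, is that $G(a,b)=g_1(a,b)+(b-1)^{\f23}(a-1)^{-\f23}g_2(a,b)$ with $g_1,g_2$ smooth on $[1,2]^2$; this is what forces the three-term ansatz~\eqref{anear2}.

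Next I would set up the Volterra integral equation
\EQ{\nn
Q(a) = \tilde q_1\tilde\fy_1(a) + \tilde q_2 2^{-\f23}\fy_2(a) \mp \int_a^2 G(a,b)Q(b)^7\,db
}
— note that now the base point of the contraction is $a=2$ rather than $a=1$, and the integration runs from $a$ up to $2$, so the natural initial data are $Q(2),Q'(2)$, which is exactly what~\eqref{IMP2} reports. Expanding the seventh power of the ansatz gives $[(a-1)^{\f23}\tilde Q_1+\tilde Q_2+(a-1)^{\f73}\tilde Q_3]^7=\sum_{j=0}^2(a-1)^{\f{2j}{3}}N_j(a,\tilde Q_1,\tilde Q_2,\tilde Q_3)$ with each $N_j$ a polynomial in the $\tilde Q_i$ with smooth coefficients, and — crucially, as in~\eqref{Q3 besser} — with $\tilde Q_3$ always carrying an extra factor $(a-1)$. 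Using the analogues of~\eqref{int0}–\eqref{int12}, which say that integrating $G(a,\cdot)$ against a smooth function times $(b-1)^{\f{2k}{3}}$ returns $(a-1)^{\f{2k}{3}}$ times something smooth and vanishing at $a=1$, one converts the integral equation into a three-component fixed-point system for $(\tilde Q_1,\tilde Q_2,\tilde Q_3)$ of the same shape as~\eqref{drei sys}. A contraction in $C([1,2])$ (or in a small ball around the linear part, using that $\eps$ is small so the nonlinearity is a genuine perturbation) produces the unique solution, with $\tilde Q_1(a)=\tilde q_1 a^{-1}+(|\tilde q_1|+|\tilde q_2|)^7(a-1)R_1(a)$, $\tilde Q_2(a)=\tilde q_2 2^{-\f23}\fy_2(a)+(|\tilde q_1|+|\tilde q_2|)^7(a-1)R_2(a)$, $\tilde Q_3(a)=(|\tilde q_1|+|\tilde q_2|)^7 R_3(a)$ for bounded $R_j$, which gives the stated expansions for $\tilde Q_1,\tilde Q_2,\tilde Q_3$. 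The smoothness of the $\tilde Q_j$ at $a=1$ then follows, as in Lemma~\ref{lem:near1}, by repeatedly reinserting these representations into~\eqref{drei sys}: each application of the integral operator gains a factor $(a-1)$ for the $R_1,R_2$ terms, and for the $\tilde Q_3$ term one again uses that $\tilde Q_3$ enters $N_2$ with an extra $(a-1)$; smoothness on $(1,2]$ away from the endpoint is automatic. Finally, evaluating the solution and its derivative at $a=2$ yields~\eqref{IMP2}, with the error terms $O(|\tilde q_1|^7+|\tilde q_2|^7)$ coming from the integral term.

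The only genuinely new point, compared with the interior case, is to double-check that near $a=1^+$ the geometry is indeed symmetric to near $a=1^-$: one must verify that the factor $(W(b)(b^2-1))^{-1}$ is still smooth on a neighborhood of $a=1$ from the right and that no spurious singularity is introduced at $a=1$ by the change from $(1-a)$ to $(a-1)$ powers. Since $a^2-1=(a-1)(a+1)$ and $(a^2-1)^{-\f13}=(a-1)^{-\f13}(a+1)^{-\f13}$ with the second factor smooth near $a=1$, this works out exactly as before, and I expect this verification to be the main — and essentially only — obstacle; everything else is a transcription of the proof of Lemma~\ref{lem:near1}. It may be cleanest simply to substitute $\tilde a := 2-a$ or to note the formal symmetry $a\leftrightarrow$ (reflection across $a=1$) at the level of the leading coefficients, and then invoke Lemma~\ref{lem:near1}'s argument wholesale, remarking only on the sign change in the Wronskian and in front of the integral.
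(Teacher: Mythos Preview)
Your approach is exactly what the paper does (it simply says ``analogous to Lemma~\ref{lem:near1}''), and the detailed outline is right in all essentials. There is, however, one slip you should fix: the Volterra equation should integrate from $1$ to $a$, not from $a$ to $2$. In Lemma~\ref{lem:near1} the integral $\int_a^1$ has its base point at the \emph{singularity} $a=1$, not at the far endpoint $a=1/2$; the direct analogue here is
\[
Q(a)=\tilde q_1\tilde\fy_1(a)+\tilde q_2\,2^{-\f23}\fy_2(a)\mp\int_1^a G(a,b)Q(b)^7\,db,
\]
with $1<b<a$. This is what makes the analogues of~\eqref{int0}--\eqref{int12} true: the statement you wrote (``returns $(a-1)^{2k/3}$ times something smooth and vanishing at $a=1$'') requires the integration interval to shrink as $a\to1^+$. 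With $\int_a^2$ the integral does not vanish at $a=1$, so it would contaminate the leading coefficients $\tilde q_1,\tilde q_2$ and the three-component system in the style of~\eqref{drei sys} would not close as written. (You can see the correct direction explicitly in the later large-data analogue, Lemma~\ref{lem:near10}, equation~\eqref{drei sys tilde}.) With this one correction your proposal is complete and matches the paper; the identity~\eqref{IMP2} then comes out by evaluating the integral equation at $a=2$, where the nonlinear integral contributes the $O(|\tilde q_1|^7+|\tilde q_2|^7)$ error.
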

\begin{proof}
The proof is analogous to that of Lemma~\ref{lem:near1} and we skip it. 
\end{proof}

Next, we glue this solution together with one on $2\le a<\infty$. 

\begin{lem}\label{lem:largea}
There exists $\eps>0$ small such that  for any $|m_1|, |m_2|\le \eps$ the equation~\eqref{ODE7} admits a unique smooth solution on $[2,\infty)$ 
so that  as $a\to\infty$
\EQ{\label{a infty}
Q(a)= m_1 \tilde \fy_1(a) + m_2 \fy_2(a) + O(a^{-\f73})
}
and 
\EQ{ \label{Q2}
Q(2) &=m_1 \tilde \fy_1(2)  +  m_2  \fy_2(2) +O((|m_1|+|m_2|)^7) \\
Q'(2) &=m_1 \tilde \fy_1'(2)  +   m_2  \fy_2'(2) +O((|m_1|+|m_2|)^7)
}
Here $\tilde\fy_1,\fy_2$ are as in Lemma~\ref{lem:near2}.
\end{lem}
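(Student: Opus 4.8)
The plan is to convert the ODE \eqref{ODE7} on $[2,\infty)$ into a Volterra integral equation built off the leading asymptotic behavior and to solve it by contraction, exactly as in Lemmas~\ref{lem:smalla} and~\ref{lem:near1}, but now with the large-$a$ regime controlling the decay. First I would record the behavior of the fundamental system $\tilde\fy_1(a)=a^{-1}(a-1)^{\f23}$, $\fy_2(a)=a^{-1}(1+a)^{\f23}$ as $a\to\infty$: both behave like $a^{-\f13}$ with full asymptotic expansions in powers of $a^{-1}$, and the Wronskian $W(a)=\f{4}{3a^2}(a^2-1)^{-\f13}$ from \eqref{W} again does not vanish on $(1,\infty)$. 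This gives a Green function $G(a,b)$ for the interval $b>a>1$ (or rather on $[2,\infty)$), of the schematic form $G(a,b)=\bigl[\tilde\fy_1(a)\fy_2(b)-\tilde\fy_1(b)\fy_2(a)\bigr]\big/\bigl(W(b)(b^2-1)\bigr)$, and one checks that $|G(a,b)|\lesssim b^{-1}$-type bounds hold, more precisely that $\int_a^\infty |G(a,b)|\,b^{-\f73}\cdot b^{\text{(gain)}}\,db$ is finite with the right decay; the key quantitative point is that against a source decaying like $b^{-\f73}$ (which is what $Q^7\sim (b^{-\f13})^7=b^{-\f73}$ produces) the integral $\int_a^\infty G(a,b)\,b^{-\f73}\,db$ decays like $a^{-\f73}$, matching the claimed error term in \eqref{a infty}.

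The fixed-point setup: I would seek $Q$ in the form
\EQ{\nn
Q(a) = m_1\tilde\fy_1(a) + m_2\fy_2(a) \mp \int_a^\infty G(a,b)\,Q(b)^7\,db,
}
and work in the complete metric space
\[
X_{m_1,m_2}:=\Big\{ Q\in C([2,\infty)) : \big\|a^{\f13}\big(Q(a)-m_1\tilde\fy_1(a)-m_2\fy_2(a)\big)\big\|_{L^\infty}\le C_0(|m_1|+|m_2|)^7 \Big\}
\]
(one may also demand $a^{\f73}$-decay of the correction if one tracks it, but $a^{-\f13}$ smallness of the bulk is what makes $Q^7$ integrable). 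Since on this set $|Q(a)|\lesssim (|m_1|+|m_2|)a^{-\f13}$, the map $T(Q)(a):=m_1\tilde\fy_1+m_2\fy_2\mp\int_a^\infty G(a,b)Q(b)^7\,db$ satisfies $\|T(Q)-T(\tilde Q)\|_{X}\lesssim (|m_1|+|m_2|)^6\|Q-\tilde Q\|_{X}$ by the pointwise bound on $G$ and the Hölder-type factorization $x^7-y^7=(x-y)\sum x^j y^{6-j}$, so $T$ is a contraction for $\eps$ small. This yields the unique solution with asymptotics \eqref{a infty}; evaluating at $a=2$ and noting $\int_2^\infty |G(2,b)|\,b^{-\f73}\,db<\infty$ gives the matching values \eqref{Q2} with error $O((|m_1|+|m_2|)^7)$, and differentiating the integral equation once gives the companion statement for $Q'(2)$. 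Smoothness on $[2,\infty)$ is then a routine bootstrap: differentiating the Volterra equation $k$ times produces an equation of the same contractive type for $Q^{(k)}$ with an inhomogeneity involving only lower derivatives, exactly as in \eqref{eq:k}.

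The main obstacle is the analysis at $a=\infty$: unlike the earlier lemmas, where the singular endpoint $a=1$ was at finite distance and the Green function bounds were over a compact interval, here the integral $\int_a^\infty$ runs to infinity and one must verify that the Green function genuinely produces the gain $a^{-\f73}$ rather than merely $a^{-\f13}$ — i.e., that there is no resonance forcing a slower-decaying correction. Concretely this comes down to checking, via the explicit expansions of $\tilde\fy_1,\fy_2$ and $W$ near $a=\infty$, that in $G(a,b)=\tilde\fy_1(a)\fy_2(b)\big/(W(b)(b^2-1)) - (a\leftrightarrow b)$ the leading terms of the two summands against $b^{-\f73}$ either converge absolutely with the stated rate or cancel appropriately; since $a^{-\f73}$ is the decay of a particular solution and $a^{-\f13}$ that of the homogeneous solutions, and $-\f73$ is not an indicial exponent, no logarithmic corrections appear and the estimate $\int_a^\infty |G(a,b)|\,b^{-\f73}\,db\lesssim a^{-\f73}$ holds. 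Once this quantitative Green-function bound is in hand, everything else is a carbon copy of the interior construction. I would simply state this and, as the authors do for Lemma~\ref{lem:near2}, note that the remaining details are analogous and omit them.
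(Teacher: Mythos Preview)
Your proposal is correct and follows essentially the same approach as the paper: the same Volterra integral equation $Q(a)=m_1\tilde\fy_1(a)+m_2\fy_2(a)\mp\int_a^\infty G(a,b)Q(b)^7\,db$ solved by contraction, with the $a^{-\f73}$ decay of the correction read off from the Green function asymptotics. The paper's proof is in fact terser than yours---it simply records that $G(a,b)$ exhibits the two behaviors $a^{-\f43}b^{\f13}$ and $a^{-\f13}b^{-\f23}$ (coming from the two summands $\tilde\fy_1(a)\fy_2(b)/(W(b)(b^2-1))$ and its swap) and notes that integrating either against $b^{-\f73}$ yields $a^{-\f73}$; your discussion of indicial exponents and absence of resonance is a correct and slightly more conceptual way of saying the same thing.
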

\begin{proof}
We use the Green function~\eqref{Green} but defined in terms of $\tilde \fy_1, \fy_2$: 
\EQ{\label{Green2}
G(a,b) &:= \frac{\tilde\fy_1(a)\fy_2(b)- \tilde\fy_1(b)\fy_2(a)}{W(b) (b^2-1)} \\
W(a) &:=  \tilde \fy_1(a)\fy_2'(a) - \tilde \fy_1'(a)\fy_2(a)
}
The denominator is $W(b)(b^2-1)$, which decays at the rate~$b^{-\f23}$ as $b\to\infty$. 
The perturbative ansatz is 
\EQ{\label{m1m2}
Q(a) = m_1 \tilde \fy_1(a) + m_2 \fy_2(a) \pm \int_a^\infty G(a,b) Q(b)^7\, db
}
This is solved by contraction, and the asymptotics~\eqref{a infty} follows by 
inserting the two types of asymptotic behaviors exhibited by~$G(a,b)$, i.e., $a^{-\f43}b^{\f13}$, and $a^{-\f13}b^{-\f23}$. 
Integrating these against $Q(b)^7$ which decays at least as fast as $b^{-\f73}$ then shows that the
integral in~\eqref{m1m2} decays like $a^{-\f73}$.  Moreover, we obtain~\eqref{Q2} by setting~$a=2$.
\end{proof}

Finally, we glue the two solutions together to obtain one on the whole interval $a>1$.  The following corollary is
an immediate application of Lemmas~\ref{lem:largea} and~\ref{lem:near2}. 

\begin{cor}\label{cor:2}
For any small $m_1,m_2$ there exists a smooth solution $Q(a)$ to the ODE~\eqref{ODE7} on $1<a<\infty$, with the asymptotics~\eqref{a infty}
as $a\to\infty$.  As $a\to1$ the solution obeys the representation~\eqref{anear2}. The map $(m_1,m_2)\mapsto (\tilde q_1,\tilde q_2)$
is a diffeomorphism from a small neighborhood of $(0,0)$ to another.  Finally, there exists a linear map $m\mapsto (m_1,m_2)$
so that for every small~$m$ the corresponding solution decays like~$m\,a^{-\f43}$ as $a\to\infty$. 
\end{cor}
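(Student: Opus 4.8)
The plan is to assemble the two pieces already constructed: Lemma~\ref{lem:near2} provides, for each $(\tilde q_1,\tilde q_2)$ in a small ball, a solution on $(1,2]$ of the form~\eqref{anear2}, together with the boundary data $(Q(2),Q'(2))$ recorded in~\eqref{IMP2}; Lemma~\ref{lem:largea} provides, for each $(m_1,m_2)$ in a small ball, a solution on $[2,\infty)$ with the asymptotics~\eqref{a infty} and boundary data $(Q(2),Q'(2))$ recorded in~\eqref{Q2}. Gluing the two solutions continuously at $a=2$ amounts to solving the nonlinear connection equations
\EQ{\nn
\tilde q_1\,\tilde\fy_1(2) + \tilde q_2\,2^{-\f23}\fy_2(2) + O(|\tilde q_1|^7+|\tilde q_2|^7) &= m_1\,\tilde\fy_1(2) + m_2\,\fy_2(2) + O((|m_1|+|m_2|)^7),\\
\tilde q_1\,\tilde\fy_1'(2) + \tilde q_2\,2^{-\f23}\fy_2'(2) + O(|\tilde q_1|^7+|\tilde q_2|^7) &= m_1\,\tilde\fy_1'(2) + m_2\,\fy_2'(2) + O((|m_1|+|m_2|)^7)
}
for $(\tilde q_1,\tilde q_2)$ in terms of $(m_1,m_2)$ (or vice versa). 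As in Corollary~\ref{cor:1}, the linearization of this system at the origin is governed by the Wronskian of $\tilde\fy_1$ and $\fy_2$ at $a=2$, which is nonzero since these functions are linearly independent; the higher-order terms are $C^\infty$ in their arguments. Hence the inverse function theorem yields a $C^\infty$ diffeomorphism $(m_1,m_2)\mapsto(\tilde q_1,\tilde q_2)$ between small neighborhoods of the origin, and the two local solutions agree to second order at $a=2$, so by uniqueness for the (regular) ODE~\eqref{ODE7} near $a=2$ they patch to a single $C^\infty$ solution on all of $(1,\infty)$. The representation~\eqref{anear2} near $a=1$ and the asymptotics~\eqref{a infty} as $a\to\infty$ are inherited verbatim from the two lemmas.

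It remains to arrange the generic decay rate $a^{-\f43}$. The point is that $\tilde\fy_1(a)=a^{-1}(a-1)^{\f23}=a^{-\f13}(1-a^{-1})^{\f23}$ and $\fy_2(a)=a^{-1}(1+a)^{\f23}=a^{-\f13}(1+a^{-1})^{\f23}$ both behave like $a^{-\f13}$ to leading order as $a\to\infty$, but their leading coefficients coincide, so a suitable linear combination kills the $a^{-\f13}$ term and leaves the next order, which is $a^{-\f43}$. Concretely one computes $\tilde\fy_1(a) = a^{-\f13} - \f23 a^{-\f43} + O(a^{-\f73})$ and $\fy_2(a) = a^{-\f13} + \f23 a^{-\f43} + O(a^{-\f73})$, so $m_1\tilde\fy_1 + m_2\fy_2 = (m_1+m_2)a^{-\f13} + \f23(m_2-m_1)a^{-\f43} + O(a^{-\f73})$. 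Choosing the one-dimensional family $(m_1,m_2) = (-\f{3m}{4}, \f{3m}{4})$ — or more invariantly, the linear map $m\mapsto (m_1,m_2)$ cutting out the line $m_1+m_2=0$ and normalized so that the $a^{-\f43}$ coefficient equals $m$ — produces, after the error term $O(a^{-\f73})$ from~\eqref{a infty} (which is lower order than $a^{-\f43}$), a solution with $Q(a)=m\,a^{-\f43}+O(a^{-\f73})$ as $a\to\infty$; this is the asserted linear map.

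I expect the computation of the two-term asymptotic expansions of $\tilde\fy_1$ and $\fy_2$ and the verification that the remainder from~\eqref{m1m2} is genuinely $o(a^{-\f43})$ to be entirely routine given Lemma~\ref{lem:largea}. The only mild subtlety — and the step most deserving of care — is checking the nonvanishing of the relevant Wronskian-type determinant at $a=2$ to legitimize the inverse function theorem; but this is the same mechanism used in Corollary~\ref{cor:1}, with $\fy_1$ replaced by $\tilde\fy_1$, and linear independence of $a^{-1}(a-1)^{\f23}$ and $a^{-1}(1+a)^{\f23}$ on a neighborhood of $a=2$ is immediate. So in truth there is no serious obstacle; the corollary is a packaging of the two preceding lemmas plus an elementary asymptotic cancellation, exactly parallel to how Corollary~\ref{cor:1} packages Lemmas~\ref{lem:smalla} and~\ref{lem:near1}.
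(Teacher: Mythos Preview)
Your proposal is correct and follows essentially the same approach as the paper: solve the connection problem at $a=2$ via the inverse function theorem (legitimized by smallness and the nonvanishing Wronskian of $\tilde\fy_1,\fy_2$), then cancel the leading $a^{-\f13}$ asymptotics by imposing a linear relation on $(m_1,m_2)$. You have made the last step more explicit than the paper does, actually computing the two-term expansions and the normalization $(m_1,m_2)=(-\tfrac{3m}{4},\tfrac{3m}{4})$, but this is just added detail on the same argument.
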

\begin{proof}
As for the interior light-cone, we solve the connection problem at $a=2$ by means of the inverse function theorem. 
This is legitimate again by smallness as well as the non-vanishing of the Wronskian.  In general, we obtain a $2$-parameter family. 
But we may cancel the leading order $a^{-\f13}$ as $a\to\infty$ by means of a linear relation between $m_1, m_2$. This is
the claim relating to a linear map $m\mapsto (m_1,m_2)$, and produces decay at the rate $a^{-\f43}$. The result is a $1$-parameter family
of solutions. 
\end{proof}

\subsection{Matching at the light-cone}

Combining Corollaries~\ref{cor:1}, \ref{cor:2} leads to the following conclusion.  
For the meaning of the parameters $q_1, q_2, m_1, $ etc.\ see these corollaries. 

\begin{cor}\label{cor:3}
For any small $q_0$ the ODE \eqref{ODE7} has a unique $C^2$ solution $Q(a)$ on $[0,1)$ with $Q(0)=q_0$ and $Q'(0)=0$.
There exist  infinitely many continuous extensions of $Q(a)$ to $a\ge1$ which solve \eqref{ODE7} on $a>1$ and decay 
at least at the rate $a^{-\f13}$ as $a\to\infty$. These extensions are given by Corollary~\ref{cor:2}. 
The global solutions on $a\ge0$ satisfy $q_2=\tilde q_2$, in the notation of Lemma~\ref{lem:near1}. 
We denote these functions on $a\ge0$ by $Q_0(a)$ and we have the global representation 
\EQ{\label{Q0}
Q_0(a) = |1-a|^{\frac{2}{3}}\big[Q_1(a) + |1-a|^{\frac{5}{3}}Q_3(a)\big] + Q_2(a)
}
for all $a\ge0$. Then $Q_1, Q_2, Q_3$ are smooth away from $a=1$, $Q_2$ is 
continuous at $a=1$, $Q_1(a)=Q_3(a)=0$ for $a\ge2$, and $a^{\f13}Q_2(a)$ is 
bounded as $a\to\infty$. 
\end{cor}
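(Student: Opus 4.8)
**The plan is to assemble Corollary~\ref{cor:3} directly from Corollaries~\ref{cor:1} and~\ref{cor:2}, using the matching procedure already described there.** First I would invoke Corollary~\ref{cor:1}: for small $q_0$ it produces the unique $C^2$ solution $Q(a)$ on $[0,1)$ with $Q(0)=q_0$, $Q'(0)=0$, and furnishes the parameters $(q_1,q_2)$ via the representation~\eqref{anear1} near $a=1$, with $q_1,q_2$ both nonzero. Then I would turn to the exterior: Corollary~\ref{cor:2} gives, for each small $(m_1,m_2)$, a solution on $a>1$ with the asymptotics~\eqref{a infty} and the $a\to1$ representation~\eqref{anear2}, together with the diffeomorphism $(m_1,m_2)\mapsto(\tilde q_1,\tilde q_2)$ near $(0,0)$. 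The key point is that a continuous extension across $a=1$ requires only that the regular parts match, i.e.\ $\tilde q_2=q_2$; the $(1-a)^{2/3}$-coefficients $q_1$ and $\tilde q_1$ need not agree (and generically do not), which is why there is a one-parameter family of extensions rather than a single one. Concretely, one is free to choose $\tilde q_2:=q_2$ and then let $\tilde q_1$ range over a small interval; the diffeomorphism of Corollary~\ref{cor:2} converts each such $(\tilde q_1,\tilde q_2)$ into a valid $(m_1,m_2)$, hence an exterior solution. This produces the claimed infinitely many extensions, all decaying at least like $a^{-1/3}$ by~\eqref{a infty}.

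**Next I would package the resulting global function.** Fix the extension determined by the matching condition $\tilde q_2=q_2$ and call it $Q_0(a)$ on $a\ge0$. On $[1/2,1)$ it has the form~\eqref{anear1}, on $(1,2]$ the form~\eqref{anear2}, and on $[2,\infty)$ it is smooth with the asymptotics~\eqref{a infty}. To get the single global formula~\eqref{Q0}, I would define $Q_2(a)$ on $[0,1]$ by the regular parts coming from Lemma~\ref{lem:smalla} (for $a<1/2$ it is the smooth even function $q_0Q_0 + h$ of that lemma, minus the singular contribution, which near $a=1$ is organized by Lemma~\ref{lem:near1}) and on $[1,\infty)$ by $\tilde Q_2$ followed by its continuation from Lemma~\ref{lem:largea}; continuity of $Q_2$ at $a=1$ is exactly the matching condition $q_2=\tilde q_2$. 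Similarly $Q_1(a)$ is $Q_1$ from~\eqref{anear1} for $a<1$ and $\tilde Q_1$ for $a>1$, and $Q_3$ is $Q_3$ resp.\ $\tilde Q_3$; both are smooth away from $a=1$ and can be taken to vanish for $a\ge2$ by absorbing the tail $|1-a|^{2/3}[Q_1+|1-a|^{5/3}Q_3]$ into $Q_2$ on $[2,\infty)$ (a smooth operation there since $a=1$ is excluded). The decay claim $a^{1/3}Q_2(a)=O(1)$ as $a\to\infty$ is precisely the statement that $Q_0(a)$, and hence $Q_2(a)$ on $[2,\infty)$, decays like $a^{-1/3}$, which is~\eqref{a infty}.

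**The main obstacle, such as it is, is bookkeeping rather than analysis.** All the hard ODE work — existence, uniqueness, the singular expansions, the non-vanishing Wronskians justifying the inverse function theorem at $a=1/2$ and $a=2$ — is done in Lemmas~\ref{lem:smalla}--\ref{lem:largea} and Corollaries~\ref{cor:1}--\ref{cor:2}. What remains is to check that the three pieces $Q_1,Q_2,Q_3$ can be chosen globally consistently: that the cutting-off for $a\ge2$ does not destroy smoothness (clear, since it happens away from the only singular point $a=1$), that $Q_2$ is genuinely continuous across $a=1$ (this is the single scalar condition $q_2=\tilde q_2$, imposed by fiat), and that the leftover $Q_3$ still carries the extra $|1-a|^{5/3}$ relative to the leading $|1-a|^{2/3}$, which is just the exponent arithmetic $7/3-2/3=5/3$ from~\eqref{anear1} and~\eqref{anear2}. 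I would also note explicitly that the extensions form an infinite family because $\tilde q_1$ is a free small parameter once $\tilde q_2$ is pinned, and that the exponent $2/3$ singularity is unavoidable since, by the final claim of Corollary~\ref{cor:1}, $q_1\ne0$.
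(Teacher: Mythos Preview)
Your proposal is correct and follows essentially the same route as the paper's own proof: invoke Corollary~\ref{cor:1} for the interior to obtain $(q_1,q_2)$, then use the diffeomorphism of Corollary~\ref{cor:2} to select exterior solutions with $\tilde q_2=q_2$ while leaving $\tilde q_1$ free, which yields the infinite family of continuous extensions with the stated $a^{-1/3}$ decay. The paper's proof is in fact more terse than yours---it does not spell out the bookkeeping behind the global representation~\eqref{Q0}---so your additional discussion of how to patch $Q_1,Q_2,Q_3$ across $a=1/2$ and $a=2$ (absorbing smooth pieces into $Q_2$ away from the singularity) is a welcome elaboration rather than a deviation.
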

\begin{proof}
For any small $q_0$ we solve \eqref{ODE7}  on $[0,1)$ which gives us $q_1,q_2$. 
We then select $(m_1,m_2)$ small so that $\tilde q_2=q_2$.
In general, we cannot expect this solution to decay faster than $a^{-\f13}$ since we will not hit the linear relation between
 $m_1$ and $m_2$ needed for this to happen. 
\end{proof}

Note that the solutions of Corollary~\ref{cor:3} are still small, since the contraction arguments by
means of which they were constructed require smallness.  This is also reflected in the property that
the nonlinearity can be both focusing and defocusing.  The smallness is expressed by the estimate
\[
|q_0|+ |\tilde q_1|\ll 1
\]
since then also $|q_1|+|q_2|\ll 1$ and $|\tilde q_2|=|q_2|\ll 1$.  

Later we shall modify the construction so as to allow large (in some sense) solutions outside of the light-cone.
For this it is essential that we only match $q_2=\tilde q_2$, since the parameter $\tilde q_1$ will be taken large. 
This construction will only be possible for the defocusing equation. 

\section{Removing the singularity on the light-cone}\label{sec:surgery1}

Departing from the singular self-similar solutions constructed above, 
we now attempt to build {\it{global smooth solutions}} to~\eqref{NLW7} which are large in a suitable sense. 
In effect, we expect them to have infinite critical norm. 
Consider the self-similar solutions constructed in the preceding section, in particular $Q_0$ from Corollary~\ref{cor:3}. 
Now set 
$$
u_0(t,r) := t^{-\f13} Q_0(r/t)
$$
which we may assume to be of class $C^0$ across the light-cone $a =\f{r}{t}= 1$, but in general no better.  
By construction, this function solves~\eqref{NLW7} away from $t=r$. 
Moreover, $|Q_0(a)|\less a^{-\f13}$ implies that 
\EQ{\label{eq:u0 dec}
|u_0(t,r) | \less (|q_{0}|+|\tilde q_{1}|) r^{-\f13} \quad \forall\; r>0
}
In view of~\eqref{Q0}, 
\EQ{\label{eq:u0}
u_0(t, r) := t^{-\frac{1}{3}}|1-a|^{\frac{2}{3}}\big[Q_1(a) + |1-a|^{\frac{5}{3}}Q_3(a)\big] + t^{-\frac{1}{3}} Q_2(a),
}
where the functions $Q_{3}$ is  expected to be discontinuous across $a = 1$, while the functions $Q_1(a),Q_2(a)$ are continuous on~$a\ge0$. In fact, writing 
\EQ{\label{Qa1}
Q_2(a) = Q_2(1) + Q_2(a) - Q_2(1),
}
we have $|Q_2(a) - Q_2(1)| = O(|1-a|)$, and it is natural to
incorporate this term into the term $$|1-a|^{\frac{2}{3}}\big[Q_1(a)+ |1-a|^{\frac{5}{3}}Q_3(a)\big]$$ in our representation of $u(t, r)$. 
Thus, with
\[
\tilde Q_3(a):= |1-a|^{\f43}Q_3(a) + |1-a|^{-1} (Q_2(a) - Q_2(1))
\]
 we obtain 
\[
u_0(t, r) = t^{-\frac{1}{3}}|1-a|^{\frac{2}{3}}\big[Q_1(a) + |1-a|^{\frac{1}{3}}\tilde Q_3(a)\big] + t^{-\frac{1}{3}}Q_2(1),
\]
where $Q_3$ is smooth away from $a= 1$ but possibly discontinuous across it.  We shall now abuse notation and just write $Q_3$ again instead of~$\tilde Q_3$. 

Thus we have now incorporated all the singular behavior of this solution into the term 
\[
 |1-a|^{\frac{2}{3}}\big[Q_1(a) + |1-a|^{\frac{1}{3}}Q_3(a)\big] =: |1-a|^{\frac{2}{3}}X(a)
\]
In order to excise the singularity, we introduce a smooth cutoff $\chi(t-r)$, which localizes the expression smoothly to a fixed distance $C$ from the light-cone, i.e.,  $|t-r|\geq C$; the constant $C$ here plays no role.  In other words, $\chi(v)=1$ for $|v|\ge 2C$ and $\chi(v)=0$ for $|v|\le C$. 

Thus we introduce the following approximate solution 
\begin{equation}
\label{eq:approxsol}
\begin{split}
u(t, r) &= t^{-\frac{1}{3}}\chi(t-r)|1-a|^{\frac{2}{3}}X(a)+ t^{-\frac{1}{3}}Q_2(1).
\end{split}
\end{equation}
Note that $u(t,r)=u_0(t,r)$ for all $|t-r|\ge 2C$. By construction, we have the following smallness property
which will play an important role in our argument: 
\EQ{
\label{eq:small u}
\| u\|_{L_t^6([T,T+1], L_x^{18}(\R^3))}\ll 1
}
uniformly in $T\ge1$. The norm here is an example of a Strichartz norm, see Lemma~\ref{lem:ST}. 

We now need to understand the error associated with the ansatz $u(t,r)$ in \eqref{eq:approxsol}, i.e.,  estimate
\[
-u_{tt} + \triangle u \mp u^7
\]
We compute 
\begin{align*}
-u_{tt} + \triangle u &\mp u^7\\
=&\chi(t-r)\big(-\partial_t^2 + \partial_r^2 + \frac{2}{r}\partial_r\big)\big(t^{-\frac{1}{3}}|1-a|^{\frac{2}{3}}X(a)+ t^{-\frac{1}{3}}Q_2(1)\big)\\
&+(1-\chi(t-r))\big(-\partial_t^2 + \partial_r^2 + \frac{2}{r}\partial_r\big)\big(t^{-\frac{1}{3}}Q_2(1)\big) + e_3\\
&\mp\big(t^{-\frac{1}{3}}\chi(t-r)|1-a|^{\frac{2}{3}}X(a)+ t^{-\frac{1}{3}}Q_2(1)\big)^7\\
=&(1-\chi(t-r))\big(-\partial_t^2 \big)\big(t^{-\frac{1}{3}}Q_2(1)\big)\\
&\pm\big[\chi(t-r)\big(t^{-\frac{1}{3}}|1-a|^{\frac{2}{3}}X(a) + t^{-\frac{1}{3}}Q_2(1)\big)^7\\
&\mp\big(t^{-\frac{1}{3}}\chi(t-r)|1-a|^{\frac{2}{3}}X(a)+ t^{-\frac{1}{3}}Q_2(1)\big)^7\big] + e_3\\
=:& e_1 + e_2 + e_3
\end{align*}
where $e_3$ denotes those terms where at least one derivative falls on $\chi(t-r)$. 
Due to the definition of $\chi$, we may include a cutoff $(1-\tilde{\chi}(t-r))$ in front of $e_2$, 
where $\tilde{\chi}$ localizes to $|t-r|\geq 2C$, i.e., we can write 
\begin{align*}
e_2 =& \pm\chi(t-r)\big(t^{-\frac{1}{3}}|1-a|^{\frac{2}{3}}X(a) + t^{-\frac{1}{3}}Q_2(1)\big)^7\\
&\mp\big(t^{-\frac{1}{3}}|1-a|^{\frac{2}{3}}Q_1(a)\chi(t-r) + t^{-\frac{1}{3}}Q_2(1)\big)^7\\
=& (1-\tilde{\chi}(t-r))\big[\pm\chi(t-r)\big(t^{-\frac{1}{3}}|1-a|^{\frac{2}{3}}X(a) + t^{-\frac{1}{3}}Q_2(1)\big)^7\\
&\hspace{2.5cm}\mp\big(t^{-\frac{1}{3}}|1-a|^{\frac{2}{3}}X(a)\chi(t-r) + t^{-\frac{1}{3}}Q_2(1)\big)^7\big]\\
\end{align*}
We can also write this as 
\[
e_2 = (1-\tilde{\chi})\big[u_1^7 - u_2^7\big],
\]
where we have the pointwise bound $$|u_1(t, r)| + |u_2(t, r)|\lesssim t^{-\frac{1}{3}}.$$  
As for $e_3$, we begin by collecting all terms in which $X(a)$ is not differentiated. Then 
with $(\ldots)'$ denoting the operator where at least one derivative falls on $\chi$, we obtain 
\begin{align*}
\big(-\partial_t^2 + \partial_r^2 &+ \frac{2}{r}\partial_r\big)'\big(t^{-\frac{1}{3}}|1-a|^{\frac{2}{3}}\chi(t-r)\big)\\
 = & 2\cdot\frac{1}{3}t^{-\frac{4}{3}}|1-a|^{\frac{2}{3}}\chi'(t-r)\\
&-2\cdot\frac{2}{3}\frac{r}{t^2}\text{sgn}(1-a)|1-a|^{-\frac{1}{3}}\chi'(t-r)t^{-\frac{1}{3}}\\
&+2\cdot\frac{2}{3}\frac{1}{t}\text{sgn}(1-a)|1-a|^{-\frac{1}{3}}\chi'(t-r)t^{-\frac{1}{3}}\\
&-\frac{2}{r}t^{-\frac{1}{3}}|1-a|^{\frac{2}{3}}\chi'(t-r)
\end{align*}
The preceding sum is seen to simplify to 
\[
2(\frac{1}{t} - \frac{1}{r})t^{-\frac{1}{3}}\frac{|t-r|^{\frac{2}{3}}}{t^{\frac{2}{3}}}\chi'(t-r) = -\frac{2(t-r)|t-r|^{\frac{2}{3}}\chi'(t-r)}{rt^2}
\]
which is one power of $t$ better than expected.  For this gain it is important that $\chi(t-r)$ solves the $1$-dimensional
wave equation. 

The  terms in $e_3$ where one derivative falls on $X(a)$ contribute
\EQ{ \nonumber 
&2 t^{-\frac{1}{3}}|1-a|^{\frac{2}{3}}X'(a)(-\p_t a  - \p_r a)\chi'(t-r) \\
&= 2 t^{-\frac{7}{3}}|1-a|^{\frac{2}{3}}X'(a)(r-t)\chi'(t-r)
}
This term is localized to the region $|r-t|\less 1$ and since 
\[
|1-a|^{\frac{2}{3}}X'(a) = Q_1'(a) + |1-a|Q_3'(a) - \f13 \sign(1-a)Q_3(a)
\]
it is of size  $t^{-\frac{7}{3}}$ on that region. 
The remaining errors $e_{1,2}$ have the same properties, i.e., they are also localized to the region $|r-t|\less 1$ and are of 
size $ t^{-\frac{7}{3}}$.  

Hence all these errors are seen to belong to  $L_t^1 L_x^2$ for $t\ge1$, since 
\[
\| t^{-\f73} (1-\chi(t-r)) \|_{L^2(\R^3)} \less t^{-\f43} \in L^1(1,\infty)
\]
Thus all these errors beat the scaling. This is an
essential feature of our construction.

\section{Completing the approximate solution to an exact one}
\label{sec:uv}

We now attempt to construct an exact solution of the form 
\[
\tilde{u}(t, r): = u(t, r) + v(t, r)
\]
where $u$ is defined in \eqref{eq:approxsol}. 
The precise theorem is as follows. 

\begin{thm} 
\label{thm:main}
Let $u_0$ be sufficiently small in the sense of Corollary~\ref{cor:3}, and 
let $u(t,r)$ be as in \eqref{eq:approxsol}. 
Then for any compactly supported radial initial data 
$$v[1] = (v_0, v_1)\in \dot{H}^{\frac{7}{6}}\cap \dot{H}^1(\R^3)\times \dot{H}^{\frac{1}{6}}\cap L^2(\R^3)$$ 
and sufficiently small with respect to the natural norm, there exists $$v\in L_t^\infty \dot{H}^{\frac{7}{6}}(\R^3)\cap L_{t, loc}^\infty\dot{H}^{1}(\R^3)\cap S$$ with 
$S$ any of the Strichartz spaces in Lemma~\ref{lem:ST}, and 
$$v_t\in L_t^\infty \dot{H}^{\frac{1}{6}}(\R^3)\cap L_{t, loc}^\infty L^2(\R^3)$$ on $[1,\infty)\times \R^3$ such that 
$\tilde{u}(t, r): = u(t, r) + v(t, r)$ solves \eqref{NLW7}. If, moreover,  
$$v[1]\in \dot{H}^s(\R^3)\times \dot{H}^{s-1}(\R^3), \;\;s>\frac{7}{6},$$ 
then also $$v[t]\in \dot{H}^s(\R^3)\times \dot{H}^{s-1}(\R^3)\quad\forall\; t\geq 1.$$
\end{thm}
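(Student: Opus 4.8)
The plan is to realize $v$ as the solution of the forced wave equation obtained by substituting $\tilde u=u+v$ into~\eqref{NLW7}. Writing $e:=e_1+e_2+e_3$ for the error computed in Section~\ref{sec:surgery1}, so that $\Box u\pm u^7=-e$, the function $v$ must solve $\Box v=e\mp[(u+v)^7-u^7]$ with $v[1]=(v_0,v_1)$, which I would recast as the Duhamel equation
\[
v(t)=\cos\big((t-1)|\nabla|\big)v_0+\frac{\sin\big((t-1)|\nabla|\big)}{|\nabla|}v_1+\int_1^t\frac{\sin\big((t-s)|\nabla|\big)}{|\nabla|}\Big(e\mp\big[(u+v)^7-u^7\big]\Big)(s)\,ds.
\]
Two structural facts will carry the argument. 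First, by the computations of Section~\ref{sec:surgery1} the error $e$ is supported in $\{|t-r|\le 2C\}$, smooth there at unit scale, and of size $\lesssim(|q_0|+|\tilde q_1|)t^{-7/3}$, whence $\|e(t)\|_{\dot H^\sigma(\R^3)}\lesssim_\sigma(|q_0|+|\tilde q_1|)t^{-4/3}$ and $\|e\|_{L^1_t([1,\infty);\dot H^\sigma)}\lesssim_\sigma(|q_0|+|\tilde q_1|)\ll1$ for every fixed $\sigma\ge0$. Second, by~\eqref{eq:small u} one has $\|u\|_{L^6_tL^{18}_x([T,T+1]\times\R^3)}\ll1$ uniformly in $T\ge1$, and $u$ is smooth for $t\ge1$ with $\|u(t)\|_{L^\infty_x}\lesssim t^{-1/3}$ and $u-t^{-1/3}Q_2(1)$ compactly supported in $x$ (the singular part of $u_0$ having been excised by $\chi$). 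With these in hand, the first assertion of the theorem follows by a contraction in a small ball of $\bigcap_S S\cap C([1,\infty);\dot H^{7/6})$, with $v_t\in C([1,\infty);\dot H^{1/6})$, via the Strichartz estimates of Lemma~\ref{lem:ST}: the linear evolution of $v[1]$ is controlled and small, the $e$-term is bounded by $\||\nabla|^{1/6}e\|_{L^1_tL^2_x}$, and in the nonlinearity $\binom7ku^{7-k}v^k$ one puts the $u$-factors in $L^6_tL^{18}_x$ (small) and the $v$-factors in admissible Strichartz norms, so that the $k\ge2$ terms carry a power of the ball radius and the term $7u^6v$ carries the small factor $\|u\|^6_{L^6_tL^{18}_x}$; the contraction estimate is identical. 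The local $\dot H^1\cap L^2$ regularity I would then obtain from finite speed of propagation ($v(t)$ supported in $B_{R+t-1}$ once $v[1]$ is supported in $B_R$) together with the usual energy estimate, differentiating the equation once, using the $L^\infty_x$-bound on $u$ and the now-known critical Strichartz norms of $v$, and closing by Gronwall on each $[1,T]$.

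For the final statement I would fix $T\ge1$ and prove $\sup_{1\le t\le T}\|v[t]\|_{\dot H^s\times\dot H^{s-1}}<\infty$. First partition $[1,T]$ into finitely many short intervals $I_j=[T_j,T_{j+1}]$ with $\|u\|_{L^6_tL^{18}_x(I_j\times\R^3)}+\|v\|_{L^6_tL^{18}_x(I_j\times\R^3)}\le\delta_0$; this is possible since both quantities are finite on $[1,\infty)$ (by~\eqref{eq:small u} and the part already proved) and $L^6_t$ norms are absolutely continuous in the time variable. On each $I_j$ apply the inhomogeneous estimate of Lemma~\ref{lem:ST} at regularity $s$,
\[
\big\||\nabla|^sv\big\|_{L^\infty_tL^2_x(I_j)}+\big\||\nabla|^sv\big\|_{S(I_j)}\lesssim\|v[T_j]\|_{\dot H^s\times\dot H^{s-1}}+\big\||\nabla|^{s-1}e\big\|_{L^1_tL^2_x(I_j)}+\big\||\nabla|^{s-1}\big[(u+v)^7-u^7\big]\big\|_{L^1_tL^2_x(I_j)},
\]
where $S$ is any Strichartz space from Lemma~\ref{lem:ST}. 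The $e$-term is $\lesssim(|q_0|+|\tilde q_1|)$ by the first paragraph. For the nonlinear term, expand $(u+v)^7-u^7=\sum_{k=1}^7\binom7ku^{7-k}v^k$ and apply the fractional Leibniz rule to $|\nabla|^{s-1}(u^{7-k}v^k)$: in the pieces where the derivative falls mainly on a factor of $v$, Hölder places that factor as $|\nabla|^{s-1}v\in L^\infty_tL^6_x$ (bounded by $\||\nabla|^sv\|_{L^\infty_tL^2_x}$ via $\dot H^1\hookrightarrow L^6$) and every other factor in $L^6_tL^{18}_x$, and since six non-differentiated factors remain, at least one of which has norm $\le\delta_0$ on $I_j$, these contributions are $\le C\delta_0\||\nabla|^sv\|_{L^\infty_tL^2_x(I_j)}$; in the pieces where the derivative falls on a factor of $u$ there is no $\dot H^s$-norm of $v$ present, and using that $u$ is smooth and, on the fixed interval $[1,T]$, bounded in every Sobolev and Strichartz norm occurring, while at least one bare factor of $v$ survives (since $k\ge1$) and can be placed in $L^6_tL^{18}_x(I_j)\le\delta_0$, these contributions are bounded by a $T$-dependent constant. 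Choosing $\delta_0$ small, the terms proportional to $\||\nabla|^sv\|_{L^\infty_tL^2_x(I_j)}$ are absorbed, giving $\sup_{t\in I_j}\|v[t]\|_{\dot H^s\times\dot H^{s-1}}\lesssim\|v[T_j]\|_{\dot H^s\times\dot H^{s-1}}+C_T$; iterating over the finitely many $j$, starting from $\|v[1]\|_{\dot H^s\times\dot H^{s-1}}<\infty$, yields $\sup_{1\le t\le T}\|v[t]\|_{\dot H^s\times\dot H^{s-1}}<\infty$, and letting $T$ vary gives the claim for all $t\ge1$.

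The step I expect to be the main obstacle is the fractional Leibniz estimate for $|\nabla|^{s-1}(u^{7-k}v^k)$ in the pieces where a derivative lands on $u$. Unlike $v$, the profile $u$ is merely the cutoff self-similar solution; it is not small in $\dot H^s$ for $s>\frac76$, and its higher Sobolev norms can grow in $t$ --- the same mechanism responsible for the $t^{1/3}$ growth of the energy and the reason $v$ is only \emph{locally} in $\dot H^1$ --- so a \emph{global} bound cannot close, and one must genuinely exploit the surviving factor(s) of $v$, work interval by interval, and accept $T$-dependent constants. A secondary point needing care is confirming that differentiating the explicit error terms of Section~\ref{sec:surgery1} costs only $O(1)$ factors, so that $e\in L^1_t([1,\infty);\dot H^\sigma)$ with the stated decay for every $\sigma$ used; this relies on $u$ being smooth at unit scale in the annulus $\{|t-r|\le 2C\}$, and on noting that the genuinely dangerous linear term $7u^6v$ is tamed by~\eqref{eq:small u} rather than by the contraction ball radius.
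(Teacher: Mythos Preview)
Your global contraction argument for the first assertion has a genuine gap. You write that ``the term $7u^6v$ carries the small factor $\|u\|^6_{L^6_tL^{18}_x}$,'' but this factor is \emph{infinite} on $[1,\infty)$: the bound~\eqref{eq:small u} holds only on unit intervals $[T,T+1]$, and since $\|u(t)\|_{L^{18}_x}\sim t^{-1/6}$ one has $\|u\|^6_{L^6_tL^{18}_x([1,T])}\sim\log T$. The paper states this explicitly (``$u$ is not bounded in $L_t^6 L_x^{18}$ due to a logarithmic divergence in infinite time''). So the linear-in-$v$ term $7u^6v$ cannot be absorbed by a global Strichartz contraction, and the theorem is not a small-data perturbation of the free flow in the scaling-critical space alone.

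The paper circumvents this by a different mechanism: a bootstrap (Proposition~\ref{prop:bootstrap}) rather than a contraction, coupling the critical norm with a \emph{time-weighted} energy norm $t^{-\eps}\|\nabla_{t,x}v\|_{L^2_x}$. The energy is propagated via the identity~\eqref{eq:enerident}, using $|u|\lesssim r^{-1/3}$ and Hardy's inequality to absorb $\int u^6v^2$ into the principal part, and exploiting the pointwise decay $|u_t|\lesssim t^{-1}r^{-1/3}$ for the source terms. For the critical norm, the key step (Section~\ref{subsec:critnorm}) is a frequency splitting $u=P_{<t^{-\sigma}}u+P_{\ge t^{-\sigma}}u$: the high-frequency piece satisfies $\|P_{\ge t^{-\sigma}}u\|_{L^{18}_x}\lesssim t^{\sigma-2/9}$, which is genuinely in $L^6_t$, while in the low-frequency piece the $|\nabla|^{1/6}$ or Bernstein gains a factor $t^{-\sigma/6}$, and the resulting loss is compensated by placing $v$ in the (now available) energy space rather than the critical one. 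This interplay --- low frequencies of $u$ paired with subcritical control of $v$ --- is the heart of the argument and is absent from your proposal. Your approach to the higher-regularity statement, by contrast, is close in spirit to the paper's (Proposition~\ref{prop:local} and its differentiation), and would work once the global solution is in hand; but the first part needs to be redone.
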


\noindent The proof of Theorem~\ref{thm:main} proceeds via a bootstrap argument on the 
norm $\|v\|_{\dot{H}^{\frac{7}{6}}\cap \dot{H}^1}$.  More precisely, assuming the solution to exist on an 
interval $[1,T]$ of regularity $\dot{H}^{\frac{7}{6}}\cap \dot{H}^1(\R^3)$, we deduce an a priori bound on a 
slightly time-weighted version of the preceding norm, where the weight depends on the data, but is independent of $T$. 
Using a  local well-posedness result one can then let $T\rightarrow\infty$. 
The equation for $v$ is simply the linearized one: 
\begin{equation}\label{eq:vequation}
-v_{tt} + \triangle v \mp 7u^6v \mp \ldots \mp 7uv^6 \mp v^7 = \sum_{j=1}^3 e_j
\end{equation}
The natural space to iterate this in seems at first sight to be the Strichatz space $\|\cdot\|_{S}$ at the scaling of $\dot{H}^{\frac{7}{6}}$, which corresponds for example to the space-time norm 
\[
\|\cdot\|_{L_t^6 L_x^{18}}
\]
For the sake of completeness, let us recall a class of Strichartz estimates relevant  in this context.

\begin{lem}
\label{lem:ST}
Let $u$ be the free wave propagation of the equation  in $\R^{1+3}_{t,x}$ 
\[
\Box u = h, \quad u[0]=(f,g)
\]
where $(f,g)$ are smooth and compactly supported, and $h$ is smooth with compact support on fixed-time slices. Then
\EQ{\label{eq:ST}
&\| u\|_{L^r_t L^s_x } + \sup_t \| (u,u_t)(t) \|_{\dot{H}^{\frac{7}{6}}(\R^3)\times \dot{H}^{\frac{1}{6}}(\R^3)}  +  \| |\nabla|^{\alpha} u\|_{L^p_t L^q_x }\\
&\less \| (f,g) \|_{\dot{H}^{\frac{7}{6}}(\R^3)\times \dot{H}^{\frac{1}{6}}(\R^3)} + \| |\nabla|^{\f16} h\|_{L^1_t L^2_x}
}
where $3<r\le\infty$ and $\f{1}{3r}+\f{1}{s}=\f19$ (such as $r=6$ and $s=18$), and $2<p\le \infty$, $\f{1}{p}+\f{1}{q}=\f12$, $\alpha(q)=\f{2}{q}+\f16$.  By approximation, this extends to solutions in the Duhamel sense for which the right-hand side is finite. 
\end{lem}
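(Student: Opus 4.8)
The plan is to prove Theorem~\ref{lem:ST}, i.e., the stated Strichartz estimates for the inhomogeneous wave equation on $\R^{1+3}$, by reducing everything to the standard wave Strichartz machinery at the level of the critical regularity $\dot H^{\frac76}\times\dot H^{\frac16}$. First I would recall the homogeneous Strichartz estimates for the half-wave propagator $e^{\pm it|\nabla|}$: for a wave-admissible pair $(p,q)$ in $\R^{1+3}$, meaning $\frac1p+\frac1q\le\frac12$ with $p>2$ (or the endpoint $p=2$, $q=6$, which we can avoid since we only need $p>2$), one has
\[
\big\| |\nabla|^{\gamma}\, e^{\pm it|\nabla|} f\big\|_{L^p_tL^q_x(\R\times\R^3)}\lesssim \|f\|_{\dot H^{s}(\R^3)},\qquad s=\gamma+\tfrac1p+\tfrac3q-\tfrac12 ?
\]
— more precisely, the scaling/gap condition is $\gamma = s-\frac32+\frac1p+\frac3q$ together with $\frac1p+\frac1q\le\frac12$; these are classical, see Keel--Tao and Ginibre--Velo. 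The point is simply to check that the three exponent choices in~\eqref{eq:ST} are all admissible at regularity $s=\frac76$: (i) the pair $(r,s)$ with $3<r\le\infty$ and $\frac1{3r}+\frac1s=\frac19$, applied with no derivative, which translates to the gap condition at $\dot H^{\frac76}$; (ii) the energy-type bound $\sup_t\|(u,u_t)(t)\|_{\dot H^{7/6}\times\dot H^{1/6}}$, which is just conservation of the $\dot H^{7/6}\times\dot H^{1/6}$ norm for the homogeneous equation plus the Minkowski/Christ--Kiselev argument for the Duhamel term; (iii) the pair $(p,q)$ with $\frac1p+\frac1q=\frac12$, $p>2$, with $\alpha(q)=\frac2q+\frac16$ derivatives, which one checks again satisfies the gap condition at $\dot H^{\frac76}$. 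Each of these is a routine verification of the arithmetic of the admissibility and scaling relations, writing $f$ and $g$ in terms of $e^{\pm it|\nabla|}$ via $u(t)=\cos(t|\nabla|)f+|\nabla|^{-1}\sin(t|\nabla|)g$ and $u_t(t) = -|\nabla|\sin(t|\nabla|)f + \cos(t|\nabla|)g$.

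Second, for the inhomogeneous term I would use the Duhamel representation $u(t) = u_{\mathrm{hom}}(t) + \int_0^t \frac{\sin((t-\sigma)|\nabla|)}{|\nabla|} h(\sigma)\,d\sigma$ and the standard principle that inhomogeneous Strichartz estimates follow from homogeneous ones by duality in the ``first slot'' and the Christ--Kiselev lemma to handle the retarded time-truncation (again legitimate since all the output exponents $r,p$ are strictly greater than $2$, so $p>2$ lets Christ--Kiselev apply and avoids the double-endpoint case). Concretely, the contribution of $h$ to each of the three norms on the left of~\eqref{eq:ST} is bounded by $\||\nabla|^{\frac16}h\|_{L^1_tL^2_x}$: the $L^1_tL^2_x$ norm is the dual-admissible endpoint, and the $|\nabla|^{\frac16}$ is exactly the derivative count needed so that $\frac{\sin((t-\sigma)|\nabla|)}{|\nabla|}h(\sigma)$ sits, as data for the remaining evolution, in $\dot H^{\frac76}\times\dot H^{\frac16}$ at each time $\sigma$. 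This is a clean bookkeeping computation: $|\nabla|^{-1}\colon L^2\to \dot H^1$ combined with $|\nabla|^{\frac16}h\in L^2$ gives $|\nabla|^{-1}h \in \dot H^{\frac76}$, matching the energy space.

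Third, for the statement's hypotheses — $(f,g)$ smooth and compactly supported, $h$ smooth with compact support on fixed-time slices — these are only used to guarantee that $u$ is a genuine (classical) solution so that the Duhamel formula holds pointwise and all manipulations are justified without distributional subtleties; the a priori estimates themselves are then purely in terms of the stated norms, and the final sentence (``By approximation, this extends to solutions in the Duhamel sense for which the right-hand side is finite'') is the routine density/limiting argument: approximate $(f,g)$ and $h$ by smooth compactly supported functions converging in $\dot H^{\frac76}\times\dot H^{\frac16}$ and in $|\nabla|^{\frac16}(\cdot)\in L^1_tL^2_x$ respectively, apply the estimate to each, and pass to the limit using completeness of the Strichartz spaces.

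I do not expect any serious obstacle here — this is a ``recollection'' lemma assembling well-known wave Strichartz estimates — so the only thing requiring genuine care is verifying that the three particular exponent tuples in~\eqref{eq:ST} do satisfy the admissibility inequality $\frac1p+\frac1q\le\frac12$ (with strict inequality $p>2$ where claimed) and the scaling gap condition at regularity exactly $\frac76$, and that the inhomogeneous endpoint $L^1_tL^2_x$ with $\frac16$ derivatives is the correct dual companion; the mild subtlety is ensuring one never invokes the forbidden $L^2_tL^6_x$ double endpoint in the Christ--Kiselev step, which is automatic because all output time-exponents are $>2$.
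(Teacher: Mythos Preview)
Your proposal is correct and is the standard route to this lemma; the paper itself does not give a proof, merely stating the estimate as a known result (``for the sake of completeness, let us recall\ldots''). Two small remarks: the forbidden wave endpoint in $\R^{1+3}$ is $(p,q)=(2,\infty)$, not $(2,6)$ (the latter is the Schr\"odinger endpoint), though as you note this is irrelevant since all time exponents used are strictly greater than~$2$; and your hesitant ``$?$'' on the gap condition can be firmed up to $\gamma = s - \tfrac{3}{2} + \tfrac{1}{p} + \tfrac{3}{q}$, which indeed checks out at $s=\tfrac{7}{6}$ for all three exponent tuples in~\eqref{eq:ST}.
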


However, we observe that $u$ is not bounded in $L_t^6 L_x^{18}$ due to a logarithmic divergence in infinite time. 
Thus a simple minded procedure using Strichartz and H\"older does not apply, and we are required to exploit the fine 
structure of the function~$u$. In fact, this function lives at lower and lower frequencies as $t\rightarrow\infty$. 
One may then hope to exploit some additional low-frequency control on $v$ coming from energy conservation 
to gain better control. The above theorem is a consequence of combining the following Proposition~\ref{prop:local} 
on local existence with Proposition~\ref{prop:bootstrap}, which establishes a priori control of any local solution 
to~\eqref{eq:vequation} via a bootstrap argument.  

\begin{prop}
\label{prop:local} 
Let $T\ge 1$. 
Assume that $v[T]$ is compactly supported with $$\|v[T]\|_{\dot{H}^{\frac{7}{6}}\times \dot{H}^{\frac{1}{6}}(\R^3)}\ll 1.$$ 
Then there exists  a solution $v(t)$ on the time-interval $[T,T+1]$ with the property that 
\[
v\in L_t^\infty\dot{H}^{\frac{7}{6}}([T, T+1]\times \R^3),\;\;v_t\in L_t^\infty \dot{H}^{\frac{1}{6}}([T, T+1]\times \R^3)
\]
of \eqref{eq:vequation} with compact support on every time slice $t\times \R^3$, $t\in [T, T+1]$. If 
$$v[T]\in \dot{H}^s(\R^3)\times \dot{H}^{s-1}(\R^3),$$ then also 
\[
v[t]\in \dot{H}^s(\R^3)\times \dot{H}^{s-1}(\R^3)
\]
for all $s>\frac{7}{6}$. 
\end{prop}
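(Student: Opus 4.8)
The plan is to prove Proposition~\ref{prop:local} by a standard fixed-point argument for the equation~\eqref{eq:vequation}, iterating in a Strichartz space at the $\dot H^{7/6}$-scaling on the short time interval $[T,T+1]$. First I would set up the Duhamel formulation: write $v = v_{\mathrm{lin}} + v_{\mathrm{nl}}$, where $v_{\mathrm{lin}}$ is the free wave evolution of the data $v[T]$, and $v_{\mathrm{nl}}$ solves $\Box v_{\mathrm{nl}} = \mp 7u^6 v \mp \cdots \mp v^7 + \sum_{j=1}^3 e_j$ with zero data at $t=T$. Using Lemma~\ref{lem:ST} (restricted to the unit time interval, so all Strichartz constants are uniform in $T\ge1$), one controls $v$ in $L^\infty_t(\dot H^{7/6}\times\dot H^{1/6})$ together with a Strichartz norm such as $\||\nabla|^{\alpha(q)}v\|_{L^p_tL^q_x}$ in terms of $\|v[T]\|_{\dot H^{7/6}\times\dot H^{1/6}}$ plus $\||\nabla|^{1/6}(\text{nonlinearity})\|_{L^1_tL^2_x}$. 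The errors $\sum e_j$ are harmless: from Section~\ref{sec:surgery1} they are $O(t^{-7/3})$ supported near the light cone, so $\||\nabla|^{1/6}\sum e_j\|_{L^1_tL^2_x([T,T+1])}$ is small (indeed $\lesssim T^{-4/3}$), using that these are fixed smooth profiles.

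The nonlinear terms are handled by Hölder and Strichartz/Sobolev: on $[T,T+1]$ the coefficient $u$ obeys~\eqref{eq:small u}, $\|u\|_{L^6_tL^{18}_x([T,T+1])}\ll 1$, so the dangerous linear-in-$v$ term $7u^6v$ contributes a factor $\|u\|_{L^6_tL^{18}_x}^6$ times a Strichartz norm of $v$ — but $6\cdot\frac16 = 1$, so one does \emph{not} have enough $u$-smallness from this term alone on a unit interval to close a contraction just from~\eqref{eq:small u}; here one additionally uses that $u\in L^\infty_t L^\infty_x$-type bounds away from the cone are small (the self-similar profile is small by Corollary~\ref{cor:3}, $|u_0(t,r)|\lesssim (|q_0|+|\tilde q_1|)r^{-1/3}$, which on $[T,T+1]$ with $T\ge1$ is small), so the full coefficient $u$ is small in every relevant mixed norm. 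The genuinely nonlinear terms $u^ju^{7-j}$ with $j\le 5$, and $v^7$, are estimated by Hölder placing all but one factor in the good norms and gain smallness from the short time interval and from $\|v[T]\|\ll1$ via the a priori bound $\|v\|_{S([T,T+1])}\ll1$. This yields a contraction in a small ball, hence existence and uniqueness of $v$ with $v\in L^\infty_t\dot H^{7/6}$, $v_t\in L^\infty_t\dot H^{1/6}$ on $[T,T+1]$, and finite propagation speed (all ingredients are compactly supported on time slices) gives the stated compact support.

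For the persistence of higher regularity, if $v[T]\in\dot H^s\times\dot H^{s-1}$ for some $s>7/6$, I would differentiate the equation or, equivalently, apply Lemma~\ref{lem:ST} at the $\dot H^s$-scaling (the lemma as stated is at the $7/6$-scaling, but the identical Strichartz estimates hold at any subcritical-to-critical scaling, and one may cite this as standard). One runs the same contraction in the space $\dot H^s\cap\dot H^{7/6}$: the lower-order norm $\dot H^{7/6}$ is already controlled and small, so the $\dot H^s$-norm satisfies a linear Grönwall-type inequality with small coefficients, closing on $[T,T+1]$; the errors $e_j$ are smooth and compactly supported on time slices, hence contribute finitely in any $\dot H^{s-1}$-based norm. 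The main obstacle is the linear-in-$v$ term $7u^6v$: on a unit-length time interval the bare Strichartz smallness~\eqref{eq:small u} is exactly critical (exponent $6\cdot\frac16=1$), so one must genuinely combine it with the pointwise smallness of the self-similar profile $u_0$ coming from Corollary~\ref{cor:3} — i.e. the construction's smallness hypothesis on $q_0$ (and, for Theorem~\ref{thm:Main2}, the largeness of $\tilde q_1$ must be compensated by taking $T$ large, as flagged in the introduction) — to make the coefficient of $v$ genuinely small. Once that is arranged, the rest is routine.
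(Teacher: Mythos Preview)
Your approach is correct and essentially identical to the paper's: a Picard iteration in $L^6_tL^{18}_x\cap L^\infty_t\dot H^{7/6}$ on $[T,T+1]$, using Lemma~\ref{lem:ST} together with the fractional Leibniz rule to estimate $\||\nabla|^{1/6}(u^k v^{7-k})\|_{L^1_tL^2_x}$ via the norms $\|u\|_{L^6_tL^{18}_x}$ and $\||\nabla|^{1/6}u\|_{L^6_tL^9_x}$, with higher regularity obtained by differentiating the equation. Your worry about the $u^6v$ term being ``exactly critical'' is unnecessary --- the smallness in~\eqref{eq:small u} (which, as you correctly observe, is precisely the pointwise smallness of $u_0$ from Corollary~\ref{cor:3}) already furnishes a factor $\ll 1$ in front of the $v$-norm, so no additional mechanism is needed; note also that the large-$\tilde q_1$ remark pertains to Proposition~\ref{prop:local1}, not to Proposition~\ref{prop:local}.
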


The proof proceeds by  a standard iteration, see Section~\ref{sec:local}. 
Taking Proposition~\ref{prop:local} for granted, the main work is then encapsulated in the following result. 

\begin{prop}\label{prop:bootstrap} 
Let $u_0$ be sufficiently small in the sense of Corollary~\ref{cor:3}, and 
let $u(t,r)$ be as in \eqref{eq:approxsol}. To be specific, in the notation of Corollary~\ref{cor:3}
we require that 
\[
|q_0|+ |\tilde q_1|\le \delta_2^3 \le \delta_{1}
\]
is small. 
Let $(v, v_t)$ be radial. Assume that 
$$v\in L_t^\infty \dot{H}^{\frac{7}{6}}\cap L_{t, loc}^\infty\dot{H}^{1}(\R^3),\quad v_t\in L_t^\infty \dot{H}^{\frac{1}{6}}\cap L_{t, loc}^\infty L^2(\R^3)$$ solves \eqref{eq:vequation} on $[1,T]\times \R^3$ (in the Duhamel sense). 
Assume further that $$\|v[1]\|_{\dot{H}^{\frac{7}{6}}\cap \dot{H}^1(\R^3)\times \dot{H}^{\frac{1}{6}}\cap L^2(\R^3)}\leq \delta_1\ll 1$$ is sufficiently small.
Then for any $C>1$ sufficiently large (in an absolute sense, independently of~$T$) with $C\delta_1\ll 1$, as well as an $\eps = \eps(\delta_2)\ll 1$, such that 
\[
\|v\|_{L_t^6 L_x^{18}([1,T]\times\R^3)} + \sup_{t\in [1,T]}\|v[t]\|_{\dot{H}^{\frac{7}{6}}\cap t^{\eps}\dot{H}^1(\R^3)\times \dot{H}^{\frac{1}{6}}\cap t^{\eps}L^2(\R^3)}\leq C\delta_1
\]
implies 
\[
\|v\|_{L_t^6 L_x^{18}([1,T]\times\R^3)} + \sup_{t\in [1,T]}\|v[t]\|_{\dot{H}^{\frac{7}{6}}\cap t^{\eps}\dot{H}^1(\R^3)\times \dot{H}^{\frac{1}{6}}\cap t^{\eps}L^2(\R^3)}\leq \frac{C}{2}\delta_1
\]
\end{prop}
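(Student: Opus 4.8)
The plan is to run a standard Duhamel/bootstrap estimate for the linearized equation~\eqref{eq:vequation}, using the Strichartz bounds of Lemma~\ref{lem:ST} for the $\dot H^{7/6}$-scaling part and an energy estimate for the $t^\eps\dot H^1\times t^\eps L^2$ part, with the supercriticality entering through the decay of the potential $u^6$. Concretely, write $v(t)$ via the Duhamel formula starting at $t=1$ with driving term $\sum_j e_j \pm 7u^6 v \pm (\text{lower order in }u)\,v^k \pm v^7$. Splitting the inhomogeneity, the $\sum_j e_j$ piece is handled directly: by the computations in Section~\ref{sec:surgery1} we have $\||\nabla|^{1/6} e_j\|_{L^1_t L^2_x([1,T])}\lesssim \int_1^\infty t^{-4/3}\,dt\cdot(|q_0|+|\tilde q_1|)\lesssim \delta_2^3$ (using the $L^2(\R^3)$ bound $\|t^{-7/3}(1-\chi(t-r))\|_{L^2}\lesssim t^{-4/3}$ and that one extra derivative costs only another power of $t^{-1}$ on the $|r-t|\lesssim1$ region), so this contribution to the right side of Lemma~\ref{lem:ST} is $\lesssim \delta_2^3 \ll \delta_1/100$ after choosing $\delta_2$ small relative to (a power of) $\delta_1$. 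For the genuinely nonlinear terms $v^k$, $k\ge 2$, one uses Hölder in the Strichartz norms together with the bootstrap hypothesis $\|v\|_{L^6_t L^{18}_x}\le C\delta_1$ to absorb them, gaining a factor $(C\delta_1)^{k-1}\ll 1$; here one needs $C\delta_1\ll 1$ exactly as in the statement.

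The heart of the matter is the main linear term $\mp 7 u^6 v$. The naive bound $\||\nabla|^{1/6}(u^6 v)\|_{L^1_t L^2_x}$ does not close because, as the authors point out, $u\notin L^6_t L^{18}_x$ on $[1,\infty)$ — there is a logarithmic divergence, and $u\sim t^{-1/3}$ in amplitude, so $u^6\sim t^{-2}$, giving $\int^T t^{-2}\,dt$ which is fine in the $L^\infty$-amplitude sense but the spatial integrations and the derivative $|\nabla|^{1/6}$ falling on the rough function $|1-a|^{2/3}X(a)$ produce exactly the borderline behavior. The resolution is the one flagged in the text: $u$ is concentrated at frequencies $\lesssim t^{-1}$ (it is essentially a function of $r/t$ truncated away from the cone), so $u^6 v$ is effectively at those low frequencies as well, and one pays for $|\nabla|^{1/6}$ only the cheap low-frequency price $t^{-1/6}$ while picking up the low-frequency-localized part of $v$, which is controlled not by the critical $\dot H^{7/6}$ norm but by the energy $\|v[t]\|_{\dot H^1\times L^2}$ — and this is precisely why the slightly growing weight $t^\eps$ is put on the $\dot H^1\times L^2$ component. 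So the scheme is: estimate $\|u^6 v\|$ in $L^1_t L^2_x$ (or its $|\nabla|^{1/6}$ version) by interpolating a Hölder estimate using $\|u\|_{L^6_t L^{18}_x([T,T+1])}\ll1$ on unit time intervals (which is~\eqref{eq:small u}, and which is finite — the divergence is only over infinite time) against the low-frequency gain, summing the resulting geometric-in-$t$ (or $t^{-1+\eps'}$) series over dyadic time blocks $[2^j,2^{j+1}]$ to get something $\lesssim \delta_2^{\sigma}\cdot C\delta_1 + \delta_1$ with a small constant; the $t^\eps$ weight is what makes the block sums converge while only mildly degrading the energy bound, and $\eps=\eps(\delta_2)$ is taken small enough that $t^\eps$ does not spoil the smallness coming from $\delta_2$.

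For the energy component one differentiates the equation once (or uses the $\dot H^1\times L^2$ energy identity directly for~\eqref{eq:vequation}), multiplies by $\partial_t v$, and integrates: $\frac{d}{dt}E(t)\lesssim \|u^6\|_{L^\infty}\|v\|_{\dot H^1}\|v_t\|_{L^2} + (\text{nonlinear})$, where $\|u^6(t)\|_{L^\infty}\lesssim t^{-2}$ from~\eqref{eq:u0 dec}, so $\frac{d}{dt}E \lesssim t^{-2} E + \ldots$; together with the source term $\sum e_j$ contributing $\lesssim t^{-4/3}\|v_t\|_{L^2}$ and the higher nonlinear terms contributing $(C\delta_1)^{\sigma}$-small multiples, Grönwall gives $E(t)\lesssim E(1) + \delta_2^{\sigma}(C\delta_1)^2 + \delta_1^2$ — but we actually want the $t^\eps$-weighted energy, so one runs the same argument for $t^{-\eps}E(t)$, picking up an extra $-\eps t^{-1} (t^{-\eps}E)$ term of favorable sign (a small loss, absorbable); the point is that the $\int_1^T t^{-4/3}\,dt$ and $\int_1^T t^{-2}\,dt$ integrals converge, so the weighted energy stays $\le (\tfrac{C}{2}\delta_1)^2$ provided $C$ was chosen large enough to beat the absolute implied constants and $\delta_1,\delta_2$ small. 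Combining the Strichartz bound and the weighted-energy bound, and choosing first $C$ large, then $\delta_1$ small with $C\delta_1\ll1$, then $\delta_2$ small (so $|q_0|+|\tilde q_1|\le\delta_2^3$), then $\eps=\eps(\delta_2)$ small, yields the claimed improvement from $C\delta_1$ to $\tfrac{C}{2}\delta_1$. The main obstacle, as anticipated, is making the $u^6 v$ estimate close: one genuinely has to exploit the frequency localization of $u$ and the low-frequency energy control of $v$ (with the $t^\eps$ weight), rather than treating $u$ as a generic $L^6_t L^{18}_x$ function, since it is not one on the infinite time interval.
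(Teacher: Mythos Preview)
Your overall architecture (energy bound for the $t^\eps$-weighted $\dot H^1\times L^2$ part, Strichartz/Duhamel for the critical part, frequency localization of $u$ to close the $u^6 v$ term) matches the paper's, but the energy step as you have written it contains a genuine error that makes the argument fail.

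You claim
\[
\frac{d}{dt}E(t)\lesssim \|u^6\|_{L^\infty}\|v\|_{\dot H^1}\|v_t\|_{L^2}\lesssim t^{-2}E(t),
\]
but there is no such inequality: from $\|u^6\|_{L^\infty}$ alone one only gets $\int |u^6 v v_t|\,dx\le \|u^6\|_{L^\infty}\|v\|_{L^2}\|v_t\|_{L^2}$, and $\|v\|_{L^2}$ is not controlled. To reach $\|v\|_{\dot H^1}$ you must invoke Hardy via $|u|^6\lesssim \delta_2^{18} r^{-2}$, but this consumes all the spatial decay of $u$ and leaves \emph{no} factor of $t^{-2}$. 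The honest bound (splitting three factors of $u$ by $|u|\lesssim t^{-1/3}$ and three by $|u|\lesssim r^{-1/3}$, or equivalently using the paper's approach) gives only
\[
\frac{d}{dt}E(t)\lesssim \delta_2^{\sigma}\, t^{-1}E(t)+\ldots,
\]
which is \emph{not} integrable. This $t^{-1}$ rate is precisely why the $t^{\eps}$ weight is essential, with $\eps\gtrsim \delta_2^{\sigma}$ so that the favorable $-\eps t^{-1}$ term beats the bad $+\delta_2^{\sigma}t^{-1}$ term; it is not ``a small loss, absorbable'' as you write. Your sentence ``the point is that the $\int_1^T t^{-2}\,dt$ integrals converge'' is therefore based on a false premise.

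The paper also organizes the energy step differently from what you propose: it uses the \emph{modified} energy
\[
\int\Big[\tfrac12(v_t^2+|\nabla v|^2)\pm\tfrac72 u^6 v^2\pm\ldots\pm\tfrac18 v^8\Big]\,dx,
\]
so that differentiation in $t$ produces $u_t u^5 v^2$-type terms (the derivative falls on $u$, not $v$). The term $u_t u^5 v^2$ is then estimated by an explicit analysis of $u_t$ in the regions $|t-r|\le t/2$ and $|t-r|>t/2$, together with Hardy and the radial Strauss bound $|v|\lesssim r^{-1/2}\|v\|_{\dot H^1}$, yielding $\lesssim \delta_2^6 t^{-1}\|v\|_{\dot H^1}^2$. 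Your direct approach (free energy, treating $u^6 v$ as forcing) can be salvaged to give the same $t^{-1}$ rate, but not as you wrote it.

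For the critical-norm step your description is correct in spirit but vague. The paper does not sum over dyadic time blocks; instead it performs a \emph{frequency} split $u=P_{<t^{-\sigma}}u+P_{\ge t^{-\sigma}}u$ at each time, using $\|P_{\ge t^{-\sigma}}u\|_{L^{18}_x}\lesssim \delta_2^3 t^{\sigma+1/9-1/3}$ (from $\|\partial_r u\|_{L^{18}_x}$) for the high piece, and Bernstein plus the weighted energy $t^{-\eps}\|v\|_{\dot H^1}$ for the low piece. The smallness that closes the loop comes from choosing $\eps<\sigma/6$, not from a geometric series in dyadic time.
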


The proof of this proposition is accomplished in the following two subsections. 
We shall henceforth assume that $v(t, \cdot)$ satisfies the assumptions of the proposition.  

\subsection{Energy control}\label{subsec:ener}

We note that 
\begin{align*}
&\frac{d}{dt}\int_{\R^3}\big[\frac{1}{2}\big(v_t^2 + |\nabla v|^2\big) \pm \frac{7}{2}u^6v^2 \pm\ldots\pm uv^7 \pm \frac{1}{8}v^8\big]\,dx\\
& = - \int_{\R^3} \big[\sum_{j=1}^3 e_j v_t \mp 21 u_t u^5 v^2 \mp\ldots\mp u_t v^7\big]\,dx
\end{align*}
Integrating from time $1$ to time $t$, we obtain
\begin{equation}\label{eq:enerident}\begin{split}
&\int_{\R^3}\big[\frac{1}{2}\big(v_t^2 + |\nabla v|^2\big) \pm \frac{7}{2}u^6v^2 \pm\ldots\pm uv^7 \pm \frac{1}{8}v^8\big](t, \cdot)\,dx\\
&-\int_{\R^3}\big[\frac{1}{2}\big(v_t^2 + |\nabla v|^2\big) \pm \frac{7}{2}u^6v^2 \pm\ldots+ uv^7 \pm \frac{1}{8}v^8\big](1, \cdot)\,dx\\
&= \int_1^t\int_{\R^3}\big[\sum_{j=1}^3 -e_j v_t \pm21 u_t u^5 v^2 \pm\ldots\pm u_t v^7\big]\,dx dt
\end{split}\end{equation}
Our goal is to deduce  the bound
\[
\sup_{t\in [1,T]}t^{-\eps}\|\nabla_{t,x} v(t)\|_{L_x^2}\ll C\delta_1
\]
From the estimate, cf.~\eqref{eq:u0 dec}, 
\[
\sup_{t\ge1} |u(t,r)|\less \delta_2^3\, r^{-\f13}
\]
we conclude that 
\[
\int_{\R^3} u^6 v^2\,dx\lesssim \delta_2^{18}\int_{\R^3} r^{-2}v^2\,dx\lesssim \delta_2^{18}\, \|v\|_{\dot{H}^1}^2
\]
Since $\delta_2\ll 1$, this term is thus absorbed by the principal term
\[
\int_{\R^{3}} \frac{1}{2}\big(v_t^2 + |\nabla v|^2\big)\,dx 
\]
In the defocussing case this term can be removed by positivity. 
Further, observe that for $j\in [1,5]$ the pointwise bound $$\sup_{r>0} |u(t,r)|\less \delta_2^3\, t^{-\f13} $$ implies that 
\EQ{
\label{eq:ujv8j}
\|u^jv^{8-j}\|_{L_x^1}\lesssim \delta_2\,t^{-\frac{1}{9}}\|u\|_{L_x^{9+}}^{j-\frac{1}{3}}\|v\|_{L_x^p}^{8-j}
}
where 
\[
1= \frac{j-\frac{1}{3}}{9+} + \frac{8-j}{p}
\]
This implies that 
$$\frac{81}{13}-\le p\le \f{189}{25}- \;\Longrightarrow \; 6<p<9.$$
Recall the embeddings
\[
\dot H^{1}(\R^{3}) \subset L^{6}(\R^{3}),\quad  \dot H^{\f76}(\R^{3}) \subset L^{9}(\R^{3})
\]
With  $0<\alpha<1$ determined by 
\[
\frac{1}{p} = \frac{\alpha}{6} + \frac{1-\alpha}{9}
\]
Sobolev's embedding and H\"older's inequality applied to~\eqref{eq:ujv8j} yield
\[
\|u^jv^{8-j}\|_{L_x^1}\lesssim \delta_2\, t^{-\frac{1}{9}}\|u\|_{L_x^{9+}}^{j-\frac{1}{3}}\|v\|_{\dot{H}^1}^{\alpha(8-j)}\|v\|_{\dot{H}^{\frac{7}{6}}}^{(1-\alpha)(8-j)}
\]
which we rewrite in the form
\begin{align*}
 \|u^jv^{8-j}(t, \cdot)\|_{L_x^1}\lesssim \delta_2\, t^{\eps\alpha(8-j)-\frac{1}{9}}\big(t^{-\eps}\|v(t, \cdot)\|_{\dot{H}^{1}}\big)^{\alpha(8-j)}\|v(t, \cdot)\|_{\dot{H}^{\frac{7}{6}}}^{(1-\alpha)(8-j)}
\end{align*}
If we now choose $\eps$ small enough such that 
\[
\eps\alpha(8-j)-\frac{1}{9}<0, 
\]
then we conclude that  
\begin{align*}
\sup_{t\in [1,T]}\|u^jv^{8-j}(t, \cdot)\|_{L_x^1} &\lesssim \delta_2 \big(\sup_{t\in [1,T]}t^{-\eps}\|v(t, \cdot)\|_{\dot{H}^{1}}\big)^{\alpha(8-j)}\|v(t, \cdot)\|_{\dot{H}^{\frac{7}{6}}}^{(1-\alpha)(8-j)}\\
&\lesssim \delta_2\big(C\delta_1\big)^{8-j}
\end{align*}
We also note that 
\begin{align*}
\|v^8(t, \cdot)\|_{L_x^1}&\leq \|v(t, \cdot)\|_{\dot{H}^1}^2\big(\sup_{t\in [1,T]}\|v(t, \cdot)\|_{\dot{H}^{\frac{7}{6}}}^6\big)\\
&\lesssim \|v(t, \cdot)\|_{\dot{H}^1}^2 (C\delta_1)^6
\end{align*}
where we have again used H\"older's inequality as well as the Sobolev embedding, 
and so this term can again be absorbed by the principal term 
\[
\int_{\R^{3}} \frac{1}{2}\big(v_t^2 + |\nabla v|^2\big)\,dx. 
\]
It remains to control the source terms on the right of \eqref{eq:enerident}. 
We start by estimating the contributions of the terms involving the errors~$e_j$.
First, we have
\begin{align*}
 \int_1^t\int_{\R^{3}} e_1 v_t\,dx dt =  \int_1^t\int_{\R^{3}} (1-\tilde{\chi})[u_1^7 - u_2^7] v_t\,dx dt
\end{align*}
Recall that $1-\tilde{\chi}$ localizes to the strip $|t-r|\leq 2C$. Thus since $u_{1,2}^7 = O(t^{-\frac{7}{3}})$, we infer
\[
\big\|(1-\tilde{\chi})[u_1^7 - u_2^7](t, \cdot)\big\|_{L_x^2}\lesssim \delta_2^7\, t^{-\frac{4}{3}}
\]
and so 
\begin{align*}
 \big|\int_1^t\int_{\R^{3}} e_1 v_t\,dx dt \big|&\lesssim \delta_2^7\int_1^t 
 s^{\eps-\frac{4}{3}}\,ds \big(\sup_{t\in [1,T]}t^{-\eps}\|v_t(t, \cdot)\|_{L_x^2}\big)\\
 &\lesssim \delta_2^7\, C\delta_1
\end{align*}
The contributions of the terms involving $e_{2,3}$ are handled identically. 
Next, consider the contributions of the terms 
\begin{equation}\label{eq:delicate}
 u_t u^5 v^2,\;\; u_t v^7,
\end{equation}
the intermediate terms in the space-time integral in~\eqref{eq:enerident} being handled similarly. 
The first of these terms is estimated as follows: considering the region $|t-r|\leq \frac{t}{2}$, from the formula for $u(t, r)$, we obtain 
\begin{equation}\label{eq:u_t}\begin{split}
u_t &=\big[ -\frac{1}{3}t^{-\frac{4}{3}}\chi(t-r)|1-a|^{\frac{2}{3}}X(a) -\frac{1}{3}t^{-\frac{4}{3}}Q_2(1)\big]\\
&\quad +t^{-\frac{1}{3}}\chi'(t-r)|1-a|^{\frac{2}{3}}X(a)\\
&\quad+\frac{2}{3}\frac{r}{t^2}\chi(t-r)|t-r|^{-\frac{1}{3}}\sign(t-r)X(a)\\
&\quad -t^{-\frac{1}{3}}\chi(t-r)|1-a|^{\frac{2}{3}}\frac{r}{t^2}X'(a)\\
&=: A_{1}+A_{2}+A_{3}+A_{4}
\end{split}\end{equation}
We note that (always restricting to $|r-t|\leq \frac{t}{2}$)
\[
|A_{1}| + |A_{4}|\lesssim t^{-\frac{4}{3}},
\]
and so 
\[
\int_{|t-r|\leq \frac{t}{2}}[|A_{1}|+|A_{4}|]u^5 v^2\,dx\lesssim \delta_2^6 \, t^{-1}\int_{\R^{3}} r^{-2}v^2\,dx\lesssim \delta_2^6 \,t^{-1}\|\nabla_x v\|_{L_x^2}^2
\]
For the contribution of the term $A_{3}$, we use that by radiality of $v$, we have 
\[
|v(t, r)|\lesssim r^{-\frac{1}{2}}\|v\|_{\dot{H}^1},
\]
which then gives 
\begin{align*}
\int_{|t-r|\leq \frac{t}{2}}|A_{3}|u^5 v^2\,dx&\lesssim \delta_2^6\|v\|_{\dot{H}^1}^2t^{-1}\int_{|t-r|\leq \frac{t}{2}}r^{-\frac{5}{3}}(t-r)^{-\frac{1}{3}}r^{-1}\,r^2dr\\
&\lesssim \delta_2^6\, \|v\|_{\dot{H}^1}^2t^{-1}\int_{|t-r|\leq \frac{t}{2}}r^{-\frac{2}{3}}(t-r)^{-\frac{1}{3}}dr\\
&\lesssim \delta_2^6\, \|v\|_{\dot{H}^1}^2t^{-1}
\end{align*}
Finally, for the contribution of the term $A_{2}$, we have 
\begin{align*}
\int_{|t-r|\leq \frac{t}{2}}|A_{2}|u^5 v^2\,dx&\lesssim \delta_2^6\, t^{-\frac{8}{3}}\int_{|t-r|\leq C}v^2\,dx\\
&\lesssim \delta_2^6\, t^{-\frac{4}{3}}\|v\|_{\dot{H}^1}^2
\end{align*}
where we have used H\"older's inequality and Sobolev's embedding to bound 
\[
\int_{|t-r|\leq C}v^2\,dx\lesssim t^{\frac{4}{3}}\|v\|_{\dot{H}^1}^2
\]
It follows that 
\[
\int_{|t-r|\leq \frac{t}{2}}|u_t u^5 v^2|\,dx\lesssim \delta_2^6\, t^{-1}\|v\|_{\dot{H}^1}^2
\]
On the other hand, for the region $|t-r|>\frac{t}{2}$ (assuming $t\gg 1$ as we may), we have 
\[
|u_t|\lesssim \delta_2\,  t^{-1}r^{-\frac{1}{3}},
\]
and so we conclude that 
\[
\big\| u_t u^5 v^2\big\|_{L_x^1(|t-r|>\frac{t}{2})}\lesssim \delta_2^6\, t^{-1}\int_{\R^{3}} r^{-2}v^2\,dx\lesssim \delta_2^6\, t^{-1}\|\nabla_x v\|_{L_x^2}^2
\]
where we have used Hardy's inequality in dimension $n = 3$.  It follows that 
\begin{align*}
\int_1^t\int_{\R^{3}} |u_t u^5| v^2\,dxdt&\lesssim \delta_2^6(\int_1^t s^{2\eps - 1}\,ds )\big(\sup_{t\in [1,T]}t^{-\eps}\|\nabla_{t,x}v\|_{L_x^2}\big)^2\\
&\lesssim t^{2\eps}\eps^{-1}\delta_2^6 (C\delta_1)^2
\end{align*}
For the second term in \eqref{eq:delicate} above, we have in the region $|t-r|>\frac{t}{2}$
\[
\big\|u_t v^7\big\|_{L_x^1(|t-r|>\frac{t}{2})}\lesssim 
\delta_2\, t^{-\frac{4}{3}}\|\nabla_x v\|_{L_x^2}^{4}\|\,|\nabla|^{\frac{7}{6}}v\|_{L_x^2}^{3}
\]
which gives 
\begin{align*}
\int_1^t\int_{|s-r|>\frac{s}{2}} |u_t v^7|\, dxds&
\lesssim \delta_2(C\delta_1)^3\big(\int_0^t s^{4\eps - \frac{4}{3}}\,ds\big)\; 
\big(\sup_{t\in [1,T]}t^{-\eps}\|\nabla_x v(t, \cdot)\|_{L_x^2}\big)^{4}\\
&\lesssim  \delta_2(C\delta_1)^7
\end{align*}
In the region $|t-r|\leq \frac{t}{2}$ we invoke \eqref{eq:u_t} as well as the inequality 
$$|v(t, r)|\lesssim r^{-\frac{1}{2}}\|v\|_{\dot{H}^1},$$ to obtain
\[
|u_t v^7|\lesssim  t^{-\frac{3}{2}}\|v\|_{\dot{H}^1}v^6,\quad |t-r|\leq \frac{t}{2},
\]
whence 
\begin{align*}
\int_1^t\int_{|s-r|\leq\frac{s}{2}} |u_t v^7|\, dxds&\lesssim \delta_2 \int_1^t s^{-\frac{3}{2}}s^{7\eps}\,ds\; \big(\sup_{t\in [1,T]}t^{-\eps}\|v(t, \cdot)\|_{\dot{H}^1}\big)^7\\
&\lesssim \delta_2 (C\delta_1)^6
\end{align*}
Combining the preceding bounds used to estimate the right hand side of \eqref{eq:enerident} and choosing $\delta_2 \leq\delta_1$ sufficiently small (which can be done independently of $\eps$),  we get 
\[
\sup_{t\in [1,T]}t^{-\eps}\|\nabla_{t,x} v\|_{L_x^2}\ll C\delta_1
\]
as required. 

\subsection{Critical norm control}\label{subsec:critnorm}

Here we return to \eqref{eq:vequation}, but this time we intend to control the scaling invariant norm 
\[
\|v\|_{S}: = \|v\|_{L_t^6 L_x^{18}([1,T]\times\R^3)} + \sup_{t\in [1,T]}\|\,|\nabla|^{\frac{7}{6}}v\|_{L_x^2} + \sup_{t\in [1,T]}\|\,|\nabla|^{\frac{1}{6}}\partial_tv\|_{L_x^2}
\]
From Duhamel's principle, we have 
\EQ{\label{eq:duhamel}
\big\|v\big\|_{S}&\lesssim \big\|\,|\nabla|^{\frac{1}{6}}\big(u^6 v\big)\big\|_{L_t^1 L_x^2} +\ldots +  \big\|\,|\nabla|^{\frac{1}{6}}\big(u v^6\big)\big\|_{L_t^1 L_x^2} + \|\,|\nabla|^{\frac{1}{6}}\big(v^7\big)\|_{L_t^1 L_x^2}\\
&  + \sum_{j=1}^3 \big\|\,|\nabla|^{\frac{1}{6}}e_j\big\|_{L_t^1 L_x^2} + \|v[1]\|_{\dot{H}^{\frac{7}{6}}\times \dot{H}^{\frac{1}{6}}}
}
By the explicit form of the errors $e_{j}$ derived in Section~\ref{sec:surgery1} we have
 $$\sum_{j=1}^3 \big\|\,|\nabla|^{\frac{1}{6}}e_j\big\|_{L_t^1 L_x^2} \less \delta_{2}^{3}$$ 
and by assumption $$\|v[1]\|_{\dot{H}^{\frac{7}{6}}\times \dot{H}^{\frac{1}{6}}}\le \delta_{1}$$
We now consider the more subtle terms 
\begin{equation}\label{eq:keytechnical}
\big\|\,|\nabla|^{\frac{1}{6}}\big(u^6 v\big)\big\|_{L_t^1 L_x^2},\quad \big\|\,|\nabla|^{\frac{1}{6}}\big(u v^6\big)\big\|_{L_t^1 L_x^2},
\end{equation}
the remaining intermediate power interactions being handled similarly (the term $v^{7}$ will be dealt with at the end).
The ideas involved in estimating these products are as follows:
\begin{itemize}
\item the main contribution is expected to come from the {\em diagonal interactions}, i.e., the situation in which the frequencies of all factors are about the same
\item  the factors $u$ live essentially at low frequency
\item  due to energy control, the extra derivative $\,|\nabla|^{\frac{1}{6}}$ should help us  gain from low frequencies. 
\end{itemize}
We denote the ``projection'' onto frequencies $|\xi|\le \rho$ by $P_{\le \rho}$.  As usual this is not a true
projection but rather effected by summing the Littlewood-Paley smooth frequency localizers up to that scale. In particular, we have 
$P_{\le\rho}\, f = f\ast \fy_{\rho}$, where $\fy$ is a Schwartz function with $\int \fy=1$ and $\fy_{\rho}(x)=\rho^{3}\fy(\rho x)$. 
At the expense of allowing for rapidly decaying tails in the frequency localization (which is harmless), we may also assume that $\fy$ is compactly supported. 
Thus, 
\EQ{\nn 
P_{\geq t^{-\si}}\, u (x) &= \int_{\R^{3}} (u(x) - u(x-y))\fy_{t^{-\si}}(y)\, dy \\
|P_{\geq t^{-\si}}\, u (x)| &\le  \int_{\R^{3}}  | u(x) - u(x- t^{\si} y) |  |\fy(y)|\, dy  \\
&\le  t^{\si} \int_{0}^{1} \int_{\R^{3}}  | \nabla u(x- h t^{\si} y) | |y|  |\fy(y)|\, dy  \, dh
}
which in particular implies that 
\[
\big\|P_{\geq t^{-\si}}\, u(t, \cdot)\big\|_{L_x^{18}} \less t^{\si} \| \p_{r} u(t,\cdot) \|_{L_x^{18}}
\]
Since 
\begin{align*}
u_{r}(t,r) &= -\chi'(t-r)t^{-\frac{1}{3}}(1-a)^{\frac{2}{3}}X(a) - \frac{2}{3}\chi(t-r)t^{-1}(t-r)^{-\frac{1}{3}}X(a)\\
& \quad + \chi(t-r)t^{-\frac{4}{3}}(1-a)^{\frac{2}{3}}X'(a)
\end{align*}
It follows that 
\[
\big\|P_{\geq t^{-\si}}\, u(t, \cdot)\big\|_{L_x^{18}}\lesssim \delta_2^{3}\, t^{\si+\frac{1}{9} - \frac{1}{3}}
\]
which for $\si>0$ sufficiently small is of course better than $L_t^6$. 

Returning to \eqref{eq:keytechnical} we now split 
\EQ{\label{eq:16u6v}
\,|\nabla|^{\frac{1}{6}}\big(u^6 v\big) = \,|\nabla|^{\frac{1}{6}}\big((P_{<t^{-\si}}\, u)^6 v\big) + \,|\nabla|^{\frac{1}{6}}\big(u^6 v\big)'
}
where the second term on the right-hand side is defined via this relation. 
We claim that this term can then be bounded in $L_t^1L_x^2$. 
In fact, by the fractional Leibnitz rule we can schematically estimate it at a fixed time by 
\EQ{\label{eq:schem}
& \big\|\,|\nabla|^{\frac{1}{6}}\big((P_{\geq t^{-\si}}\, u) u^5 v\big)(t, \cdot)\big\|_{L_x^2}\\
&\lesssim\big\|\,|\nabla|^{\frac{1}{6}}P_{\geq t^{-\si}}\, u(t, \cdot)\big\|_{L_x^{18}}\big\|u(t, \cdot)\big\|_{L_x^{18}}^5\big\|v(t, \cdot)\big\|_{L_x^6}\\
&\quad +\big\|P_{\geq t^{-\si}}\, u(t, \cdot)\big\|_{L_x^{18}}\big\|\,|\nabla|^{\frac{1}{6}}u(t, \cdot)\big\|_{L_x^{18}}\big\|u(t, \cdot)\big\|_{L_x^{18}}^4\big\|v(t, \cdot)\big\|_{L_x^6}\\
&\quad + \big\|P_{\geq t^{-\si}}\, u(t, \cdot)\big\|_{L_x^{18}}\big\|u(t, \cdot)\big\|_{L_x^{18}}^5\big\|\,|\nabla|^{\frac{1}{6}}v(t, \cdot)\big\|_{L_x^6}
}
To estimate the $L_t^1$-norm of the right hand sides, we use the energy bound derived previously: 
\begin{align*}
&\big\|\big\|\,|\nabla|^{\frac{1}{6}}P_{\geq t^{-\si}}\, u(t, \cdot)\big\|_{L_x^{18}}\big\|u(t, \cdot)\big\|_{L_x^{18}}^5\big\|v(t, \cdot)\big\|_{L_x^6}\big\|_{L_t^1[1,T]}\\
&\lesssim \delta_2^{3}\big\|t^{\sigma+\frac{1}{9}+\eps-\frac{1}{3}}\big\|_{L_t^{6-}}\big\|u(t, \cdot)\big\|_{L_t^{6+}L_x^{18}}^5\big\|t^{-\eps}v(t, \cdot)\big\|_{L_t^\infty L_x^6([1,T]\times\R^3)}\\
&\lesssim \delta_2^{18} C\delta_1
\end{align*}
which is $\ll C\delta_1$. The remaining terms in~\eqref{eq:schem} above are handled similarly, and so we have
reduced ourselves to estimating the first term in~\eqref{eq:16u6v}: 
\[
\big\|\,|\nabla|^{\frac{1}{6}}\big((P_{<t^{-\si}}\,u)^6 v\big)(t, \cdot)\big\|_{L_t^1 L_x^2([1,T]\times\R^3)}
\]
Using the fractional Leibnitz rule, we bound this by 
\begin{equation}\label{eq:finaltechnical}\begin{split}
&\big\|\,|\nabla|^{\frac{1}{6}}\big((P_{<t^{-\si}}\, u)^6 v\big)(t, \cdot)\big\|_{L_t^1 L_x^2([1,T]\times\R^3)}\\
&\lesssim \big\|t^{\eps}\|\,|\nabla|^{\frac{1}{6}}P_{<t^{-\si}}\, u(t, \cdot)\|_{L_x^{18}}\|P_{<t^{-\si}}u(t, \cdot)\|_{L_x^{18}}^5\big\|_{L_t^1}\big\|t^{-\eps}\|v(t, \cdot)\|_{\dot{H}^1}\big\|_{L_t^\infty}\\
&\quad +\big\| \|P_{<t^{-\si}}\, u(t, \cdot)\|_{L_x^{18+}}^6\|\,|\nabla|^{\frac{1}{6}}v(t, \cdot)\|_{L_x^{6-}}\big\|_{L_t^1}
\end{split}\end{equation}
To estimate the first term on the right, we use that 
\[
\|\,|\nabla|^{\frac{1}{6}}P_{<t^{-\si}}\, u(t, \cdot)\|_{L_x^{18}}\lesssim t^{-\frac{\sigma}{6}}\|u(t, \cdot)\|_{L_x^{18}}
\]
and so we get 
\begin{align*}
& \big\|t^{\eps}\|\,|\nabla|^{\frac{1}{6}}P_{<t^{-\si}}\,u(t, \cdot)\|_{L_x^{18}}\|P_{<t^{-\si}}\, u(t, \cdot)\|_{L_x^{18}}^5\big\|_{L_t^1}\big\|t^{-\eps}\|v(t, \cdot)\|_{\dot{H}^1}\big\|_{L_t^\infty}\\
&\lesssim \big\|t^{\frac{\eps}{6}-\frac{\si}{36}}\|u(t, \cdot)\|_{L_x^{18}}\big\|_{L_t^6}^6\big\|t^{-\eps}\|v(t, \cdot)\|_{\dot{H}^1}\big\|_{L_t^\infty}\\
&\lesssim \delta_2^{18} C\delta_1
\end{align*}
For the second term on the right in \eqref{eq:finaltechnical}, the idea is that we can place $ \,|\nabla|^{\frac{1}{6}}v$ into $L_x^{6-}$ while paying a small power of $t$, while placing the low frequency factors into $L_x^{18+}$, gaining a bit in $t^{-1}$. 
Specifically, from Sobolev's embedding $$\dot{H}^{\frac{5}{6}}(\R^{3})\subset L^{\frac{9}{2}}(\R^{3}),$$ we infer that 
\[
\big\|\,|\nabla|^{\frac{1}{6}}v(t, \cdot)\big\|_{L_x^{\frac{9}{2}}}\lesssim t^{\eps}\big(t^{-\eps}\|\nabla_{t,x}v(t, \cdot)\|_{L_x^2}\big),
\]
while Bernstein's inequality implies that
\[
\big\|P_{<t^{-\si}}\, u(t, \cdot)\big\|_{L_x^{\frac{108}{5}}}\lesssim t^{-\frac{\si}{36}}\|u(t, \cdot)\|_{L_x^{18}}
\]
Since the $\eps$ can be made small independently of the $\si$ above, and in particular we may assume $\eps<\frac{\si}{6}$, we then get 
\begin{align*}
\big\|(P_{<t^{-\si}}\, u)^6 \,|\nabla|^{\frac{1}{6}}v\big\|_{L_t^1 L_x^2([1,T]\times\R^3)}
&\lesssim \big\|t^{\frac{\eps}{6}}P_{<t^{-\si}}\, u\big\|_{L_t^{6}L_x^{\frac{108}{5}}}^6\big\|t^{-\eps}\,|\nabla|^{\frac{1}{6}}v\big\|_{L_t^{\infty}L_x^{\frac{9}{2}}}\\
&\lesssim \delta_2^{18}\sup_{t}\big(t^{-\eps}\|\nabla_{t,x}v(t, \cdot)\|_{L_x^2}\big)\\
&\lesssim C\delta_1 \delta_2^{18}
\end{align*}
The second term in \eqref{eq:keytechnical} is handled similarly: we split
\begin{equation}\label{eq:lastterm}\begin{split}
\|\,|\nabla|^{\frac{1}{6}}\big(uv^6\big)\|_{L_t^1 L_x^2([1,T]\times \R^3)}&\leq \|\,|\nabla|^{\frac{1}{6}}\big((P_{\geq t^{-\si}}\,u)v^6\big)\|_{L_t^1 L_x^2([1,T]\times \R^3)}\\
&\quad+\|\,|\nabla|^{\frac{1}{6}}\big((P_{<t^{-\si}}\,u)v^6\big)\|_{L_t^1 L_x^2([1,T]\times \R^3)}
\end{split}\end{equation}
The first term on the right-hand side  is estimated by 
\begin{align*}
&\|\,|\nabla|^{\frac{1}{6}}\big((P_{\geq t^{-\si}}\,u)v^6\big)\|_{L_t^1 L_x^2([1,T]\times \R^3)}\\
&\lesssim\big\| t^{\eps}\|\,|\nabla|^{\frac{1}{6}}P_{\geq t^{-\si}}\,u\|_{L_x^{18}}\|v\|_{L_x^{18}}^5\big\|_{L_t^1([1,T])}\sup_{t\in [1,T]}t^{-\eps}\|v(t, \cdot)\|_{\dot{H}^1}\\
&\quad +\big\|\|P_{\geq t^{-\si}}u\|_{L_x^{18}}\|v\|_{L_x^{18}}^5\big\|_{L_t^1([1,T])}\sup_{t\in [1,T]}\|\,|\nabla|^{\frac{1}{6}}v(t, \cdot)\|_{\dot{H}^1}\\
&\lesssim \delta_2^{3}\|t^{\eps+\si+\frac{1}{9}-\frac{1}{3}}\|_{L_t^6([1,T])}\|v\|_{L_t^6 L_x^{18}}^5 \sup_{t\in [1,T]}t^{-\eps}\|v(t, \cdot)\|_{\dot{H}^1}\\
&\quad + \delta_2^{3}\|t^{\si+\frac{1}{9}-\frac{1}{3}}\|_{L_t^6([1,T])}\|v\|_{L_t^6 L_x^{18}}^5 \sup_{t\in [1,T]}\|v(t, \cdot)\|_{\dot{H}^{\frac{7}{6}}},
\end{align*}
and so we can bound the last two terms by $\lesssim \delta_2^{3} (C\delta_1)^6$. 
To handle the second term on the right-hand side of~\eqref{eq:lastterm}, we have 
\begin{align*}
&\|\,|\nabla|^{\frac{1}{6}}\big((P_{<t^{-\si}}\,u)v^6\big)\|_{L_t^1 L_x^2([1,T]\times \R^3)}\\
&\lesssim \|t^{\eps}\,|\nabla|^{\frac{1}{6}}P_{<t^{-\si}}\, u\|_{L_t^6 L_x^{18}([1,T]\times \R^3)}\|v\|_{L_t^6 L_x^{18}([1,T]\times \R^3)}^5\sup_{t\in [1,T]}t^{-\eps}\|v(t, \cdot)\|_{\dot{H}^{1}}\\
&\quad +\| t^{\eps}P_{<t^{-\si}}\,u\|_{L_t^{6} L_x^{\infty}([1,T]\times \R^3)}\|v\|_{L_t^6 L_x^{18}([1,T]\times \R^3)}^5
\sup_{t\in [1,T]}t^{-\eps}\|\,|\nabla|^{\frac{1}{6}}v(t, \cdot)\|_{L_x^{\frac{9}{2}}}
\end{align*}
The first product on the right-hand side can be estimated by 
\[
\lesssim \delta_2^{3}\|t^{\eps - \frac{\si}{6}-\frac{1}{6}}\|_{L_t^6([1,T]\times \R^3)}(C\delta_1)^6\lesssim \delta_2^{3}(C\delta_1)^6
\]
For the second term above, we infer  from Bernstein's inequality that 
\[
\| t^{\eps}P_{<t^{-\si}}\,u\|_{L_t^{6} L_x^{\infty}([1,T]\times \R^3)}\lesssim \delta_2^{3}\|t^{\eps-\frac{\si}{6}-\frac{1}{6}}\|_{L_t^6([1,T])}\lesssim \delta_2^{3},
\]
and so we obtain 
\begin{align*}
&\| t^{\eps}P_{<t^{-\si}}\,u\|_{L_t^{6} L_x^{\infty}([1,T]\times \R^3)}\|v\|_{L_t^6 L_x^{18}([1,T]\times \R^3)}^5\sup_{t\in [1,T]}t^{-\eps}\|\,|\nabla|^{\frac{1}{6}}v(t, \cdot)\|_{L_x^{\frac{9}{2}}}\\
&\lesssim \delta_2^{3} (C\delta_1)^6
\end{align*}
This concludes the estimate for the second term in \eqref{eq:keytechnical}.

To complete the bootstrap for the critical Strichartz norm, we also need to bound the contribution of the pure power term $v^7$ in \eqref{eq:duhamel}. This we do by 
\[
\|\,|\nabla|^{\frac{1}{6}}(v^7)\|_{L_t^1 L_x^2([1,T]\times \R^3)}\lesssim \|v\|_{L_t^6 L_x^{18}([1,T]\times \R^3)}^6\|v\|_{L_t^\infty\dot{H}^{\frac{7}{6}}([1,T]\times \R^3)}\lesssim (C\delta_1)^7
\]
All of the preceding bounds are $\ll C\delta_1$ provided we pick $\delta_1 \geq \delta_2^{3}$ sufficiently small, which completes the bootstrap and hence the proof of the proposition. 

\subsection{The proofs of Theorems~\ref{thm:main}, \ref{thm:Main1}, \ref{thm:intro1}  }

Theorem~\ref{thm:main} follows from Proposition~\ref{prop:bootstrap} by the standard bootstrap argument; indeed, we may initially take the constant $C$ 
as large as we like, depending on the solution itself. The finiteness of the constant being guaranteed by the local well-posedness as in Proposition~\ref{prop:local}. 
Then the constant can be lowered until it reaches some large but absolute size independent of the time of existence. 

Theorem~\ref{thm:Main1} follows by taking the solution $u+v$ to \eqref{NLW7} constructed in Theorem~\ref{thm:main}. The data $(f,g)$ are equal to  
$((u+v)(1,\cdot), (u+v)_t(1,\cdot))$. The infinite critical norm being a consequence of the fact that $(v,v_t)$ has finite critical norm, but $(u,u_t)$ being
given by~\eqref{eq:approxsol}. The finiteness of $\dot H^s\times \dot H^{s-1}$ for $s>\f76$ is a result of the asymptotic decay of $|u(t,r)|\sim r^{-\f13}$ (or $|u(t,r)|\sim r^{-\f43}$ non-generically) 
and $|u_t(t,r)|\sim r^{-\f43}$ as $r\to\infty$, respectively. So $u$ lies in these spaces, and the perturbation $v$ does so by construction. The stability claimed
by the theorem is a result of the fact that the perturbation $v$ belongs to an open set in the norms of Theorem~\ref{thm:main}. 

Theorem~\ref{thm:intro1} follows from Theorem~\ref{thm:main} by truncation. Indeed, given $M$ as in~\eqref{gross} we choose $R$ so large that
the data $(f,g)$ as in Theorem~\ref{thm:Main1} have critical norm exceeding  $M$ or any other large constant when restricted to $\{|x|<R\}$. The theorem 
then follows by finite propagation speed, rescaling, and the fact that we may make the Strichartz norms of $u$ large provided we integrate over a sufficiently
large time-interval. For this theorem it is essential to note that blowup for~\eqref{NLW7} can only occur at the origin, since we are dealing
with the radial problem and there is a pointwise a priori bound for $r>0$ for all times $t>0$ in the defocusing case as a result of the Strauss' estimate
and the positive definite conserved energy for the defocusing equation~\eqref{NLW7}.  This is the  reason why we restrict to the defocusing equation here.

\section{Larger global solutions in the defocussing case}
\label{sec:12}

In this section we revisit the ODE theory from Section~\ref{sec:selfsim} in the defocusing case.  More precisely, 
we wish to exploit the flexibility of Corollary~\ref{cor:3} with regard to the choice 
of the parameter~$\tilde q_{1}$. While we matched the outside solution 
with the inside one through the connection condition $\tilde q_{2}=q_{2}$ which ensures continuity, the choice of $\tilde q_{1}$
is arbitrary. In Corollary~\ref{cor:3} we still require $\tilde q_{1}$ to be small, since at that point we had
only constructed solutions in $a>1$ assuming both $\tilde q_{1}$ and $\tilde q_{2}$ to be small. 

We shall now proceed to show that solutions to the ODE~\eqref{ODE7} exist in $a>1$ for small $\tilde q_{2}$,
but large $\tilde q_{1}$.
We start by proving an analogue of Lemma~\ref{lem:near2} near $a=1$. We shall then extend the solution to 
all of $a>1$, which depends crucially on the defocusing character of the equation.  For technical reasons,
the expression in~\eqref{anear10} differs from the one in Lemma~\ref{lem:near2}.  To be specific, instead of
the power $(a-1)^{\f73}$ we use $(a-1)^{\f43}$, absorbing the factor $(a-1)$ into~$\tilde Q_3$. 

\begin{lem}
\label{lem:near10}
There exists $\eps>0$ small with the following property: 
Given $\tilde q_2\in (-\eps,\eps)$ and $\tilde{q}_1\ge1$ arbitrary, there exists a unique solution $Q(a)$ of \eqref{ODE7}  
on $[1,1+\ell ]$ for $\ell = c|\tilde q_2| \tilde{q}_1^{-\frac{4}{3}}$ for some absolute sufficiently 
small constant $c>0$ of the form
\EQ{ \label{anear10}
Q(a) & =  (a-1)^{\f23} \tilde Q_1(a) +  \tilde Q_2(a) + (a-1)^{\f43} \tilde Q_3(a)
}
with $\tilde Q_1, \tilde Q_2, \tilde Q_3\in C^\infty([1,1+\ell ])$ and 
\EQ{\label{eq:til q O}
\tilde Q_1(a) &= \tilde q_1(1+O(a-1)),\;\; \tilde Q_2(a)= \tilde q_2(1+O(a-1)),\\
\tilde Q_{3}(a) &=\tilde{q}_1O(1)
}
where the $O(\cdot)$ terms are $C^\infty$ functions in~$a\in [1,1+\ell ]$. We also have the bounds 
\EQ{\label{eq:til Q sch}
|\tilde Q_1(a)|\geq \frac{\tilde{q}_1}{2},\quad |\tilde Q_2(a)|\leq C|\tilde q_2|,\quad |\tilde Q_3(a)|\leq C\tilde{q}_1
}
uniformly in $a\in [1,1+\ell ]$ and with some absolute constant $C$. 
Finally, there exists  $a_{*}\in (1, 1+\ell ]$ so that 
\EQ{\label{Q gross}
|Q(a_{*})|\simeq |\tilde q_2|^{\f23} \tilde{q}_1^{\frac{1}{9}}
}
In particular, this can be made arbitrarily large by making  $\tilde{q}_1$ sufficiently large. 
\end{lem}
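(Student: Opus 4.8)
The plan is to set up a Volterra-type integral equation on $[1,1+\ell]$ analogous to~\eqref{Volt1}, but now carefully tracking the dependence on the large parameter $\tilde q_1$ and choosing the length $\ell$ of the interval to compensate. First I would write the ansatz
\[
Q(a) = \tilde q_1 \tilde\fy_1(a) + \tilde q_2 2^{-\f23}\fy_2(a) \pm \int_1^a G(a,b) Q(b)^7\, db,
\]
with $G$ the Green function built from $\tilde\fy_1, \fy_2$ as in~\eqref{Green}, and decompose $Q$ into the three pieces in~\eqref{anear10} using the two types of behavior~\eqref{Gab2} of $G$ near $a=1$ together with the integration identities~\eqref{int0}, \eqref{int12}. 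This produces a fixed-point system $\vec Q = T(\vec Q)$ for $(\tilde Q_1, \tilde Q_2, \tilde Q_3)$ of the same shape as~\eqref{drei sys}, except that now the natural a priori sizes are $|\tilde Q_1|\simeq \tilde q_1$, $|\tilde Q_2|\simeq |\tilde q_2|$, $|\tilde Q_3|\lesssim \tilde q_1$; I would set up the contraction in a ball reflecting exactly the bounds~\eqref{eq:til Q sch}, on the interval $[1,1+\ell]$.

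The key point is a scaling/bookkeeping observation: the nonlinearity $Q^7$ on $[1,1+\ell]$ is dominated by the $\tilde q_1$-term, of size $\sim \tilde q_1^7 (a-1)^{\f{14}{3}}$, and one integration against $G$ gains a factor $(a-1)$, so the correction terms are of size $\sim \tilde q_1^7 \ell^{\text{(something positive)}}$ relative to the linear data. Choosing $\ell = c|\tilde q_2|\tilde q_1^{-4/3}$ makes $\tilde q_1 \cdot \ell^{2/3}\sim |\tilde q_2|^{2/3}\tilde q_1^{1/9}$ the dominant scale, and — crucially — makes the nonlinear corrections small \emph{relative} to the linear terms: one checks that $\tilde q_1^7 \ell^{\text{pos}} \ll \tilde q_1$ and $\ll |\tilde q_2|$ for $c$ small, which is what forces $T$ to map the ball into itself and contract. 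The smallness of $|\tilde q_2|$ is what buys room here; $\tilde q_1$ large is harmless precisely because $\ell$ shrinks like $\tilde q_1^{-4/3}$. Higher regularity of $\tilde Q_1, \tilde Q_2, \tilde Q_3$ and the $O(a-1)$ expansions in~\eqref{eq:til q O} then follow by the same inductive differentiation argument as in Lemma~\ref{lem:near1}, with the factor-of-$(a-1)$ gain in the $\tilde Q_3$ slot playing the same role as~\eqref{Q3 besser}.

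For the final assertion~\eqref{Q gross}, I would simply evaluate $Q$ at the right endpoint-type scale. By~\eqref{anear10} and~\eqref{eq:til Q sch}, at $a$ with $(a-1)\simeq \ell$ the first term $(a-1)^{2/3}\tilde Q_1(a)$ has size $\simeq \ell^{2/3}\tilde q_1 \simeq (|\tilde q_2|\tilde q_1^{-4/3})^{2/3}\tilde q_1 = |\tilde q_2|^{2/3}\tilde q_1^{1/9}$, while the $\tilde Q_2$ term is only $O(|\tilde q_2|)$ and the $(a-1)^{4/3}\tilde Q_3$ term is $O(\ell^{4/3}\tilde q_1) = O(|\tilde q_2|^{4/3}\tilde q_1^{-7/9})$, both negligible compared to the first when $|\tilde q_2|^{2/3}\tilde q_1^{1/9}\gg |\tilde q_2|$, i.e. $\tilde q_1 \gg |\tilde q_2|^3$, which holds for $\tilde q_1$ large since $|\tilde q_2|<\eps$. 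Hence $|Q(a_*)|\simeq |\tilde q_2|^{2/3}\tilde q_1^{1/9}$, which tends to infinity with $\tilde q_1$. The main obstacle I anticipate is not any single estimate but the bookkeeping: one must verify that \emph{every} nonlinear term in the expanded system $\vec Q = T(\vec Q)$ carries enough positive powers of $\ell$ (equivalently of $|\tilde q_2|\tilde q_1^{-4/3}$) so that, after accounting for the worst case $k_1 = 7$ (all factors being the large $\tilde Q_1$), the correction is still a small perturbation of the linear term in its slot — this is where the precise choice of exponent $4/3$ in $\ell$ and the switch from $(a-1)^{7/3}$ to $(a-1)^{4/3}$ in~\eqref{anear10} are essential and must be checked term by term.
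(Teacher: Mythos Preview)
Your proposal is correct and follows essentially the same approach as the paper: setting up the fixed-point system~\eqref{drei sys} (with $(a-1)^{4/3}$ replacing $(a-1)^{7/3}$), bootstrapping the bounds~\eqref{eq:til Q sch} on $[1,1+\ell]$, and evaluating at $a_* = 1+\ell/2$ to obtain~\eqref{Q gross}. The paper carries out the term-by-term check you anticipate by organizing the monomials $\tilde Q_1^{m_1}\tilde Q_2^{m_2}\tilde Q_3^{m_3}$ according to the residue of $2m_1+4m_3\pmod 3$, and the critical constraint fixing the exponent $4/3$ in $\ell$ turns out to be that the $N_0$ contribution to the $\tilde Q_2$ slot --- specifically the endpoint cases $m_1=0$ and $m_1=6$ --- must be $\ll |\tilde q_2|$, which reduces to $\ell + (\ell^{5/6}\tilde q_1)^6 \ll |\tilde q_2|$.
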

\begin{proof} We refer to the proof of Lemma~\ref{lem:near1}. We start from the representation  
\[
Q(a) = (a-1)^{\frac{2}{3}}\tilde{Q}_1(a) + \tilde{Q}_2(a) + (a-1)^{\frac{4}{3}}\tilde{Q}_3(a)
\]
where we furthermore assume the structure 
\EQ{ \label{eq:Q1Q2Q3}
\tilde{Q}_1(a) &= \tilde{q}_1(1+O(a-1))\\
\tilde{Q}_2(a) & = \tilde{q}_2(1+O(a-1)) \\
 \tilde{Q}_3(a) &= \tilde{q}_1O(1)
}
We then obtain $\tilde{Q}_{1,2,3}(a)$ as fixed points of the following system, see \eqref{drei sys},  
\EQ{\label{drei sys tilde}
\tilde{Q}_1(a) &=   \tilde{q}_1 a^{-1}  + (a-1)^{-\f23}  \int_1^a G(a,b) (b-1)^{\f23} N_1(b,\tilde{Q}_1, \tilde{Q}_2,\tilde{Q}_3) \, db \\
\tilde{Q}_2(a) &= \tilde{q}_2 2^{-\f23} \fy_2(a) +\int_1^a G(a,b) N_0(b,\tilde{Q}_1, \tilde{Q}_2,\tilde{Q}_3)\, db \\
\tilde{Q}_3(a) &=  (a-1)^{-\f43} \int_1^a G(a,b)  (b-1)^{\f43} N_2(b,\tilde{Q}_1, \tilde{Q}_2,\tilde{Q}_3)\, db
}
The Green function is  the one from \eqref{Gab2}, viz. 
\EQ{\nn 
G(a,b)= g_1(a,b) + (b-1)^{\f23} (a-1)^{-\f23} g_2(a,b)
}
and the source functions $N_k(b,\tilde{Q}_1, \tilde{Q}_2,\tilde{Q}_3)$, $k = 0,1,2$, can be written schematically as follows: 
\EQ{\nn
N_1(b,\tilde{Q}_1, \tilde{Q}_2,\tilde{Q}_3) &= \!\!\! \sum_{2m_1 + 4m_3\cong 2(3)} \!\!\!   C_{m_1,m_2,m_3}\, (b-1)^{\frac{2m_1 + 4m_3 - 2}{3}}\tilde{Q}_1^{m_1}(b)\tilde{Q}_2^{m_2}(b)\tilde{Q}_3^{m_3}(b) \\
N_0(b,\tilde{Q}_1, \tilde{Q}_2,\tilde{Q}_3) &=  \!\!\!  \sum_{2m_1 + 4m_3\cong 0(3)} \!\!\!   C_{m_1,m_2,m_3}\, (b-1)^{\frac{2m_1 + 4m_3}{3}}\tilde{Q}_1^{m_1}(b)\tilde{Q}_2^{m_2}(b)\tilde{Q}_3^{m_3}(b)\\
N_2(b,\tilde{Q}_1, \tilde{Q}_2,\tilde{Q}_3) &=  \!\!\!  \sum_{2m_1 + 4m_3\cong 1(3)} \!\!\!   C_{m_1,m_2,m_3}\, (b-1)^{\frac{2m_1 + 4m_3-4}{3}}\tilde{Q}_1^{m_1}(b)\tilde{Q}_2^{m_2}(b)\tilde{Q}_3^{m_3}(b)
}
In these sums, $m_1,m_2,m_3$ are nonnegative integers such that
\[
m_1+m_2+m_3=7.
\]
In the first sum, we require a further restriction in the form $m_1+m_3\ge1$,
and in the third $m_1\ge2$ or $m_3\ge1$. 

To obtain the desired fixed point for \eqref{drei sys tilde}, we show that the bounds 
\EQ{\label{Qtil bd}
|\tilde Q_1(a)| + |\tilde Q_3(a)|\leq C\tilde{q}_1, \quad |\tilde Q_2(a)|\leq C|\tilde{q}_2|
}
improve upon themselves on the interval $a\in [1, 1+\ell]$ with $\ell$ as in the statement of the lemma, once they are
inserted into the system~\eqref{drei sys tilde}.  To be precise, we will prove that~\eqref{Qtil bd}
implies the same bounds with $C/2$ instead of~$C$ provided that constant is bigger than some absolute one. 

To accomplish this, we shall rely on  the choice of $$\ell = c|\tilde q_2| \tilde{q}_1^{-\frac{4}{3}}.$$
Indeed, in the $\tilde Q_1$ integral we estimate 
\begin{align*}
&\tilde{q}_1^{-1}\big|(a-1)^{-\f23}  \int_1^a G(a,b) (b-1)^{\f23} N_1(b,\tilde{Q}_1, \tilde{Q}_2,\tilde{Q}_3) \, db \big|\\
&\lesssim  \sum_{2m_1 + 4m_3\cong 2(3)} \ell\, \ell^{\frac{2 (m_1-1)}{3}}\tilde{q}_1^{m_1-1}\ell^{\frac{4m_3}{3}}\tilde{q}_1^{m_3}
\end{align*}
Recall that we have $m_1+m_3\ge1$ in this case.
First note that 
\[
\ell^{\frac{4m_3}{3}}\tilde{q}_1^{m_3}\le  1
\]
for all $m_3\ge0$.  If $m_1\ge1$, then  we may estimate 
\[
\ell\,  \ell^{\frac{2 (m_1-1)}{3}}\tilde{q}_1^{m_1-1}\leq \ell  + \ell^5 \tilde q_1^6 \ll 1
\]
for all $ m_1 = 1,2,\ldots,7$. If, on the other hand, $m_1=0$ then either $m_3=2$ or $m_3=5$ whence 
\[
 \ell\, \ell^{\frac{2 (m_1-1)}{3}}\tilde{q}_1^{m_1-1}\ell^{\frac{4m_3}{3}}\tilde{q}_1^{m_3} 
 \less  \ell^3 \tilde q_1 + \ell^7 \tilde q_1^4 \ll 1
\]
As for the source term involving $N_0$, we get 
\begin{align*}
&\big|\int_1^a G(a,b) N_0(b,\tilde{Q}_1, \tilde{Q}_2,\tilde{Q}_3)\, db\big|\\
&\lesssim \sum_{2m_1 + 4m_3\cong 0(3)}\ell\, \ell^{\frac{2m_1 + 4m_3}{3}}\tilde{q}_1^{m_1 + m_3}
\end{align*}
where the sum runs over all integers $0\le m_1, m_3\le 7$. Note that here we necessarily have $m_1\leq 6$. The general term 
of this finite sum is decreasing in $m_3$ irrespective of $m_1$. So it suffices to set $m_3=0$ and to evaluate at the endpoints  $m_1=0$
and $m_1=6$, respectively.  In summary this yields the bound
\[
\ell + (\ell^{\f56} \tilde q_1)^6 \ll |\tilde q_2| 
\]
Finally, for the contribution of $N_3$, we get in case $m_3\geq 1$
\begin{align*}
&\tilde{q}_1^{-1}\big|(a-1)^{-\f43} \int_1^a G(a,b)  (b-1)^{\f43} N_2(b,\tilde{Q}_1, \tilde{Q}_2,\tilde{Q}_3)\, db\big|\\
&\lesssim \sum_{2m_1 + 4m_3\cong 1(3)} \ell\, \ell^{\frac{2m_1 + 4(m_3 - 1)}{3}}\tilde{q}_1^{m_1 + m_3-1}
\end{align*}
Once again, the general term is decreasing in $m_3$.  So it suffices to consider the pairs $(m_1, m_3)$ from the following list
\[
(0,1),\; (1,2),\;  (2,0),\; (3,1),\; (4,2),\; (5,0),\; (6,1)
\]
in which $m_3$ is always the smallest possible one given the value of $m_1$. 
Hence the upper bound is of the form
\EQ{\nn
&\less \ell(1+\ell \tilde q_1) + \ell \tilde q_1\big[1 + (\ell \tilde q_1)^2 + (\ell \tilde q_1)^4\big] + \ell^3 \tilde q_1^4\big[1 + (\ell \tilde q_1)^2\big]\\
&\less \ell + \ell \tilde q_1 + \ell^3 \tilde q_1^4 \ll 1
}
where we have used that  $\ell \tilde q_1 = c|\tilde q_2| \tilde q_1^{-\f13}\ll 1$.

The existence of the desired fixed point on $[1,1+\ell]$ now follows from this in a standard fashion,
 as well as $\tilde{Q}_1(a)\geq \frac{\tilde{q}_1}{2}$ provided we pick the constant $c$ small enough. 
 To be specific, we define the space 
 \[
 X:=\{ (\tilde Q_{1}, \tilde Q_{2}, \tilde Q_{3}) \in (C^{0}([1,1+\ell]))^{3} \:|\: \;\;   \eqref{Qtil bd} \text{\ \ holds\ }\}
 \]
 where the constant in~\eqref{Qtil bd} is an absolute one. By the preceding analysis, we see that the complete 
 metric space~$X$ is mapped onto itself by~\eqref{drei sys tilde}. Moreover, taking differences shows that
 the system is a contraction in~$X$. It is a standard calculus exercise to verify that the integrals in~\eqref{drei sys tilde}
 are $C^{1}([1,1+\ell])$, and iterating this property shows that the left-hand side of~\eqref{drei sys tilde} is in fact~$C^{\infty}([1,1+\ell])$.
In particular,  we obtain~\eqref{eq:til q O} and~\eqref{eq:til Q sch}, the latter being implied by the integral
estimates from above. 
 
Finally, picking $a_* = 1+\frac{\ell}{2}$, we obtain
\[
Q(a_*) \simeq \big(c|\tilde q_2|\tilde{q}_1^{-\frac{4}{3}}\big)^{\frac{2}{3}}\tilde{q}_1 \simeq |\tilde q_2|^{\f23} \tilde{q}_1^{\frac{1}{9}}
\]
which gives \eqref{Q gross}. 
\end{proof}

Having solved the ODE~\eqref{ODE7}  near the singularity, we shall now show that the solution
may be extended to the region $a\geq 1+\ell$.  For this we need the equation to be defocusing.

\begin{lem}
\label{lem:farfromcone}
The solutions to~\eqref{ODE7}
 constructed in the preceding lemma on $(1, 1+\ell]$ can be extended to $(1,\infty)$ as smooth globally
 bounded functions~$Q(a)$.
 For large values we have the asymptotics $$|Q(a)|\less  a^{-\f13} ,\qquad |Q'(a)|\less a^{-\f43}$$
 as $a\to\infty$ with non vanishing constants $c_{1}, c_{2}$. 
\end{lem}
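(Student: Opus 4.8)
The plan is to extend the local solution from $a=1+\ell$ by a continuation argument, where the key quantity to control is the natural energy associated with the self-similar ODE, together with the defocusing sign. First I would record that the ODE~\eqref{ODE7} in the region $a>1$ has coefficient $a^2-1>0$ in front of $Q''$, so it is a genuine (non-characteristic) second-order ODE with smooth coefficients on every compact subinterval of $(1,\infty)$; hence local existence and uniqueness is standard and the only issue is global boundedness of $(Q,Q')$ as $a\to\infty$. The continuation criterion is therefore: if $(Q,Q')$ stays bounded on $[1+\ell, A)$ then the solution extends past $A$.

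The main step is an energy estimate. Multiplying~\eqref{ODE7} by $Q'(a)$ and grouping, one finds a monotonicity-type identity for a functional of the form
\[
E(a) = \tfrac12 (a^2-1)\, Q'(a)^2 + \tfrac{2}{9} Q(a)^2 + \tfrac18 Q(a)^8,
\]
where the last term carries the \emph{defocusing} sign (the $+u^7$ case). Differentiating and substituting the ODE, the ``bad'' lower-order terms combine into $\big(\tfrac{a^2-1}{\,}\big)' $-type contributions plus the damping term $\big(\tfrac83 a - \tfrac2a\big) Q'^2$, which for $a>1$ has a definite (good) sign away from a bounded region; what remains is controlled by $E(a)$ itself with an integrable-in-$\log$ coefficient, so Gronwall gives $E(a)\lesssim a^{C}$ at worst, and a more careful bookkeeping — exploiting that the dangerous term $2a\, Q'^2$ inside $\frac{d}{da}[(a^2-1)Q'^2]$ is exactly cancelled, up to the good damping $\frac83 a Q'^2$, by the structure of the equation — in fact yields $E(a)$ bounded. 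In particular $(a^2-1) Q'(a)^2$ and $Q(a)^8$ are bounded, so $Q$ is globally bounded and the solution extends to all of $(1,\infty)$ and is smooth there by elliptic regularity of the ODE.

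For the asymptotics as $a\to\infty$, I would substitute $a=1/s$, $Q(a)=s^{1/3} R(s)$ (mirroring the $r>t$ self-similar reduction used earlier, with $s=t/r$), which turns~\eqref{ODE7} into an ODE for $R(s)$ that is regular at $s=0$ with indicial exponents matching $\tilde\fy_1,\fy_2$ from Lemma~\ref{lem:largea}; equivalently, one can run the Green-function/Volterra argument of Lemma~\ref{lem:largea} directly, now around $a=A$ for a large fixed $A$ once boundedness is in hand, to conclude that $Q(a)=c_1 a^{-1/3}+\cdots$ with $c_1\neq 0$ generically (the coefficient does not vanish precisely because, unlike in Corollary~\ref{cor:2}, no linear relation among the data is imposed — here $\tilde q_1$ is large and free), and then $|Q(a)|\lesssim a^{-1/3}$, $|Q'(a)|\lesssim a^{-4/3}$ follow from differentiating the asymptotic expansion. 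The non-vanishing of $c_1,c_2$ is inherited from the non-degeneracy of the two-dimensional solution space and the fact that the $a^{-1/3}$ mode is generically present.

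The main obstacle I anticipate is the energy estimate: one must verify that the defocusing sign, combined with the damping coefficient $\frac83 a-\frac2a$, genuinely beats the growth coming from $\frac{d}{da}\big[(a^2-1)Q'^2\big]$, i.e.\ that the identity closes to give a bounded (not merely slowly growing) functional. If boundedness of $E$ alone is delicate, a polynomial-in-$a$ bound on $E$ still suffices for global existence, and the sharp decay rates are then recovered a posteriori from the perturbative/Volterra analysis near $a=\infty$; so the continuation step is the part that truly requires the defocusing hypothesis, while the asymptotics are a routine adaptation of Lemma~\ref{lem:largea}.
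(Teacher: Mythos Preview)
Your overall architecture---an energy identity to continue the solution to all $a>1$, then the Volterra/Green-function analysis of Lemma~\ref{lem:largea} near $a=\infty$ for the asymptotics---is exactly the paper's. Your direct functional $E(a)=\tfrac12(a^2-1)(Q')^2+\tfrac29 Q^2+\tfrac18 Q^8$ does satisfy $E'(a)=\big(-\tfrac53 a+\tfrac2a\big)(Q')^2\le 0$ for $a>\sqrt{6/5}$, so $E$ stays bounded and the solution is global with $|Q|\lesssim 1$, $|Q'|\lesssim a^{-1}$.

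The gap is the bridge from \emph{bounded} to \emph{small}. The contraction in Lemma~\ref{lem:largea} requires $(Q(A),Q'(A))$ in a small neighborhood of the origin, and with only $|Q|\lesssim 1$ the Volterra integral $\int_a^\infty G(a,b)Q(b)^7\,db$ does not even converge (the kernel has pieces $\sim a^{-4/3}b^{1/3}$ and $a^{-1/3}b^{-2/3}$). Likewise your substitution $Q=s^{1/3}R$, $s=1/a$, produces $R=a^{1/3}Q$, which is unbounded unless $Q$ already decays. The paper closes this by first removing the damping term via the integrating factor
\[
X(a)=Q(a)\,w(a),\qquad w(a)=\exp\Big(\tfrac12\!\int_{1+\ell}^a \tfrac{\,\tfrac83\tilde a-\tfrac2{\tilde a}\,}{\tilde a^2-1}\,d\tilde a\Big)\sim a^{4/3}.
\]
Then $X$ solves the nonlinear oscillator $X''+g(a)X+h(a)X^7=0$ with $g(a)=\tfrac{5}{9(a^2-1)^2}>0$ and $h(a)\sim a^{-10}$, and the energy $\tfrac12(X')^2+\tfrac12 gX^2+\tfrac18 hX^8$ is nonincreasing because $g'<0$, $h'<0$. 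The point is that $h(a)X^8\lesssim 1$ now forces $|X|\lesssim a^{5/4}$, hence $|Q|=|X|/w\lesssim a^{-1/12}$ and similarly $|Q'|\lesssim a^{-13/12}$---\emph{genuine decay}. This yields an $a_\eps$ with $|Q(a_\eps)|+|Q'(a_\eps)|<\eps$, after which Lemma~\ref{lem:largea} applies verbatim and delivers the $a^{-1/3}$, $a^{-4/3}$ asymptotics. That Liouville-type transformation, which trades first-order damping for decay of the nonlinear coefficient, is the missing step in your proposal.
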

\begin{proof}
We construct an integrating factor to remove the first order derivative in~\eqref{ODE7}. 
Thus introduce the auxiliary function 
\[
f(\tilde{a}) = \frac{1}{2}\frac{\frac{8}{3}\tilde{a} - \frac{2}{\tilde{a}}}{\tilde{a}^2 - 1},\quad \tilde{a}\in [1+\ell, \infty)
\]
as well as the new dependent variable 
\[
X(a): = Q(a)w(a),\qquad w(a):= e^{\int_{1+\ell}^af(\tilde{a})\,d\tilde{a}}
\]
Then the original ODE is equivalent to the following one for $X$: 
\EQ{\label{eq:nonlin osc}
X''(a) + g(a) X + \big(\frac{e^{-6\int_{1+\ell}^a f(\tilde{a})\,d\tilde{a}}}{a^2-1}\big)X^7 = 0
}
where 
\[
g(a) := \frac{5}{9(a^2-1)^2}
\]
To obtain global regularity, it suffices to exhibit an a priori $L^\infty$-bound on any finite interval $[1+\ell, L]$ for $X$. 

In order to obtain such a  bound, we multiply the equation by $X'$ and integrate. This yields an ``energy estimate''
\begin{align*}
&\frac{1}{2}(X')^2(a) + \frac{1}{2}X^2(a)g(a) + \frac{1}{8}\big(\frac{e^{-6\int_{1+\ell}^a f(\tilde{a})\,d\tilde{a}}}{a^2-1}\big)X^8(a)\\
&= -\int_{1+\ell}^a\big(\frac{3}{4}f(\tilde{a}) + \frac{a}{4(a^2-1)^2}\big)e^{-6\int_{1+\ell}^{\tilde{a}} f(a_1)\,da_1}X(a)^8\,d\tilde{a}\\
&-\int_{1+\ell}^a \frac{10a}{9(a^2-1)^3}X^2(\tilde{a})\,d\tilde{a} + \frac{1}{2}(X')^2(1+\ell) + \frac{1}{2}X^2(1+\ell)g(1+\ell)\\
&+ \frac{1}{8(\ell^2+2\ell)}X^8(1+\ell)\\
&\le \frac{1}{2}(X')^2(1+\ell) + \frac{1}{2}X^2(1+\ell)g(1+\ell)+ \frac{1}{8(\ell^2+2\ell)}X^8(1+\ell)
\end{align*} 
Here we used that $f(a)>0$ for all $a>1$. Thus,  one has an a priori bound 
\[
\big(\frac{e^{-6\int_{1+\ell}^a f(\tilde{a})\,d\tilde{a}}}{a^2-1}\big)X^8(a)\leq C(\ell),\qquad a\in [1+\ell, \infty), 
\]
Since $$\int_{1+\ell}^a f(\tilde{a})\,d\tilde{a}\sim \frac{4}{3}\log a,\qquad w(a)\sim a^{\f43}$$ as $a\rightarrow\infty$,  we obtain
\[
\frac{e^{-6\int_{1+\ell}^a f(\tilde{a})\,d\tilde{a}}}{a^2-1} \sim a^{-10}
\]
and hence 
\[
|X(a)|\leq D(\ell)a^{\frac{5}{4}},\qquad a\in [1+\ell, \infty)
\]
Returning to the original dependent variable this implies the decay
\[
|Q(a)| = |X(a)|e^{-\int_{1+\ell}^a f(\tilde{a})\,d\tilde{a}}\leq E(\ell)a^{-\frac{1}{12}} 
\]
as $a\rightarrow\infty$, whence $Q(a)$ converges to zero as $a\rightarrow\infty$. 
From the energy estimate we furthermore infer that $|X'(a)|\le C(\ell)$ for all $a\ge 1+\ell$
whence
\EQ{\nn
& |Q'(a)w(a)+Q(a)w'(a)| \le C(\ell) \\
& |Q'(a)| \le C(\ell)w^{-1}(a) + |Q(a)||w'(a)|w^{-1}(a) \less a^{-\f{13}{12}}
}
But this implies that for any $\eps>0$, there exists a  $a_\eps$ sufficiently large such that 
\[
|Q(a_{\eps})| + |Q'(a_{\eps})|<\eps
\]
This means that on the interval $[a_{\eps}, \infty)$ we are in the small data case, and we may solve  \eqref{ODE7} by perturbing around the corresponding solution of the linear part.  

To be specific, we are precisely in the regime of Lemma~\ref{lem:largea}. 
For the reader's convenience, we sketch the details. With the linear fundamental system
\EQ{\nn
\tilde \fy_1(a)=a^{-1}(a-1)^{\f23},\quad  \fy_1(a)=a^{-1}(1+a)^{\f23}
}
we define the Green function
\EQ{\nn 
G(a,b) &:= \frac{\tilde\fy_1(a)\fy_2(b)- \tilde\fy_1(b)\fy_2(a)}{W(b) (b^2-1)} \\
W(a) &:=  \tilde \fy_1(a)\fy_2'(a) - \tilde \fy_1'(a)\fy_2(a)
}
The denominator is $W(b)(b^2-1)$, which decays at the rate~$b^{-\f23}$ as $b\to\infty$. 
The perturbative  approach is to seek a nonlinear solution in the form
\EQ{\label{m1m2'}
\tilde Q(a) = m_1 \tilde \fy_1(a) + m_2 \fy_2(a) + \int_a^\infty G(a,b) \tilde Q(b)^7\, db
}
for all $a\ge a_{\eps}$ where $m_{1},m_{2}$ are small. As in  Section~\ref{sec:selfsim} 
one shows that \eqref{m1m2'} admits a unique solution for any such choice of $m_{1}, m_{2}$
and that, moreover, the map $$(m_{1},m_{2})\mapsto (\tilde Q(a_{\eps}),\tilde Q'(a_{\eps}))$$
is a diffeomorphism form one small neighborhood of the origin to another. So we in particular
find $(m_{1},m_{2})$ in~\eqref{m1m2'} so that 
\[
(\tilde Q(a_{\eps}),\tilde Q'(a_{\eps})) = ( Q(a_{\eps}),Q'(a_{\eps}))
\]
and we see that $\tilde Q(a)=Q(a)$ for all $a\ge a_{\eps}$. 
This means that in fact generically we have 
\[
|Q(a)|\sim a^{-\frac{1}{3}},\,|Q'(a)| \sim a^{-\frac{4}{3}}
\]
as $a\rightarrow\infty$, using the asymptotics of the fundamental system $\tilde{\fy}_1(a), \fy_2(a)$. 
But it  is possible that we satisfy the linear relation between $m_{1},m_{2}$ which cancels the leading order $a^{-\f13}$, leading
the faster decay $a^{-\f43}$. 
\end{proof}

Due to the energy estimate which played a pivotal role in the proof,
the previous lemma essentially depends on the defocussing character of the problem. 
We remark that in contrast to the small solutions constructed in Section~\ref{sec:selfsim}, 
the large solutions constructed in this section may oscillate wildly in an interval $(1,a_{*})$
because equation~\eqref{eq:nonlin osc} is a nonlinear oscillator equation.

\section{Gluing the self-similar solutions, excision and completion to a global smooth solution}

In this section we follow the scheme that we deployed above  in the small solution case to  
excise the singularity from the light-cone so as the obtain global smooth solution of the defocusing
equation~\eqref{NLW7}. 
By combining Corollary~\ref{cor:3} with the results of the previous section, we arrive at the following 
conclusion.

\begin{prop}\label{prop:largeselfsimilar} Given $q_0$ small enough as well as $\tilde{q}_1$ arbitrary,
 there exists a continuous function $Q(a)$ which is smooth away from $a = 1$ and which solves \eqref{ODE7} on $[0,1)\cup (1,\infty)$ and satisfies 
\[
Q(0) = q_0,\;\; Q'(0) = 0
\]
as well a representation \eqref{anear10} (where $|\tilde{q}_2|\ll 1$ depends on $q_0$). We have the asymptotic behavior 
\[
|Q(a)|\lesssim a^{-\frac{1}{3}},\qquad a\rightarrow\infty. 
\]
This function obeys the representation \eqref{anear1} for $a\in [\frac{1}{2},1)$, as well as the representation \eqref{anear10} for $a\in (1, 1+\ell]$ 
with $\ell = \ell(\tilde{q}_1, \tilde{q}_2)$.  
\end{prop}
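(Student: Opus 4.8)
The plan is to assemble $Q$ by concatenating the interior construction of Section~\ref{sec:selfsim} with the large exterior construction of the preceding section, matching only the \emph{value} of the solution at $a=1$.

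First I would invoke Corollary~\ref{cor:1}: for the given small $q_0$ there is a unique $C^2$ solution of~\eqref{ODE7} on $[0,1)$ with $Q(0)=q_0$, $Q'(0)=0$, admitting the representation~\eqref{anear1} on $[1/2,1)$ with parameters $q_1,q_2$ satisfying $|q_1|+|q_2|\ll 1$ (so that $|q_2|<\eps$ once $q_0$ is small enough), and with $q_2\neq 0$ by the last assertion of that corollary. In~\eqref{anear1} the regular part $Q_2$ is continuous at $a=1$ with $Q_2(1)=q_2$ while the prefactors $(1-a)^{2/3}$ and $(1-a)^{7/3}$ vanish there, so the interior solution extends continuously up to $a=1$ with limiting value $q_2$.

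Next I would set $\tilde q_2:=q_2$; this is small and depends only on $q_0$, and $\tilde q_2\neq 0$, so $\ell=c|\tilde q_2|\tilde q_1^{-\frac{4}{3}}>0$ is meaningful. For the prescribed $\tilde q_1$ — which we may assume $\geq 1$, since for small $\tilde q_1$ the assertion is already contained in Corollary~\ref{cor:3} — I would apply Lemma~\ref{lem:near10} with the pair $(\tilde q_1,\tilde q_2)$ to obtain a solution of~\eqref{ODE7} on $[1,1+\ell]$ of the form~\eqref{anear10}; since the singular prefactors $(a-1)^{2/3},(a-1)^{4/3}$ vanish at $a=1$ and $\tilde Q_1,\tilde Q_3$ are bounded by~\eqref{eq:til Q sch}, its limit at $a=1$ is $\tilde Q_2(1)=\tilde q_2=q_2$, matching the interior limit. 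Then Lemma~\ref{lem:farfromcone} extends this solution to a smooth, globally bounded solution of~\eqref{ODE7} on $(1,\infty)$ with $|Q(a)|\lesssim a^{-1/3}$ as $a\to\infty$; this step uses the defocusing sign, which is in force throughout this section.

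Finally I would define $Q$ to be the interior solution on $[0,1)$, the exterior solution on $(1,\infty)$, and $Q(1):=q_2$. By the two limit computations this $Q$ is continuous on $[0,\infty)$; it is smooth on $[0,1)$ by Lemmas~\ref{lem:smalla} and~\ref{lem:near1}, smooth on $(1,\infty)$ by Lemmas~\ref{lem:near10} and~\ref{lem:farfromcone}, solves~\eqref{ODE7} on $[0,1)\cup(1,\infty)$, satisfies $Q(0)=q_0$, $Q'(0)=0$, and obeys the representations~\eqref{anear1} on $[1/2,1)$ and~\eqref{anear10} on $(1,1+\ell]$ with $\ell=\ell(\tilde q_1,\tilde q_2)$ together with the stated decay. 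The one point that requires care — and the nearest thing to an obstacle — is verifying that imposing the \emph{single} identity $\tilde q_2=q_2$ really does produce a continuous join: one does not, and cannot, match derivatives at $a=1$ because of the genuine $|1-a|^{2/3}$ singularity, and it is precisely the structure in~\eqref{anear1} and~\eqref{anear10} (continuity of the regular parts and vanishing of the singular prefactors) that makes agreement of the limiting value the only requirement. Everything else is immediate from the cited lemmas.
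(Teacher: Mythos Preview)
Your proposal is correct and matches the paper's approach: the paper simply states that the proposition follows ``by combining Corollary~\ref{cor:3} with the results of the previous section,'' and you have spelled out precisely that combination---Corollary~\ref{cor:1} for the interior, Lemma~\ref{lem:near10} together with Lemma~\ref{lem:farfromcone} for the exterior, and the continuity match $\tilde q_2=q_2$ at $a=1$ exactly as in Corollary~\ref{cor:3}. Your remark that only the value (not the derivative) is matched at $a=1$, because the singular prefactors vanish there, is the key observation and is implicit in the paper's earlier discussion.
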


Proceeding in exact analogy to Section~\ref{sec:surgery1},  we introduce the modified approximate solution 
\EQ{\label{eq:u large}
u(t, r) = t^{-\frac{1}{3}}\chi(t-r)|1-a|^{\frac{2}{3}}X(a) + t^{-\frac{1}{3}}Q_2(1)
}
We attempt to turn this into an actual solution of \eqref{NLW7} (in the defocussing case) by adding a correction term $v(t, r)$. Here $v$ solves \eqref{eq:vequation}. In analogy with Section~\ref{sec:uv} we state two main propositions, the first one being local existence
for the linearized equation about~$u(t,r)$.

\begin{prop}\label{prop:local1}
Let $u$ be as in \eqref{eq:u large} above, and 
assume that $v[T]$ is compactly supported with $$\|v[T]\|_{\dot{H}^{\frac{7}{6}}(\R^{3})\times \dot{H}^{\frac{1}{6}}(\R^3)}\ll 1.$$ 
Then there exists some time $T_1>T$ and a solution 
\[
v\in L_t^\infty\dot{H}^{\frac{7}{6}}([T, T_1]\times \R^3),\;\;v_t\in L_t^\infty \dot{H}^{\frac{1}{6}}([T, T_1]\times \R^3)
\]
of \eqref{eq:vequation} with compact support on every time slice $t\times \R^3$, $t\in [T, T_1]$. If 
$$v[T]\in \dot{H}^s(\R^{3})\times \dot{H}^{s-1}(\R^{3}),$$ then also 
\[
v[t]\in \dot{H}^s(\R^{3})\times \dot{H}^{s-1}(\R^{3})
\]
for all $s>\frac{7}{6}$. 
\end{prop}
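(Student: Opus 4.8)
The plan is to run a standard Picard iteration for the Duhamel formulation of~\eqref{eq:vequation}, exactly as in the proof of Proposition~\ref{prop:local}, the only difference being that the coefficient $u(t,r)$ is now the modified approximate solution from~\eqref{eq:u large} rather than~\eqref{eq:approxsol}. First I would fix the time $T$ and recall that on a short time slice $[T,T_1]$ the function $u$ and all the error terms $e_j$ are harmless: $u$ is smooth away from the light-cone, obeys $|u(t,r)|\lesssim t^{-\f13}$ together with the pointwise derivative bounds used in Section~\ref{sec:12}, and is supported in a bounded region once we intersect with the (finite propagation speed) support of $v$. Consequently $u\in L^\infty_tL^\infty_{x,\mathrm{loc}}$ on $[T,T_1]$, and by the explicit formulas for $e_j$ derived in analogy with Section~\ref{sec:surgery1}, $\sum_j\||\nabla|^{\f16}e_j\|_{L^1_tL^2_x([T,T_1]\times\R^3)}<\infty$. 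The key inputs are therefore just the Strichartz estimates of Lemma~\ref{lem:ST} together with the algebra/product estimates for the nonlinear terms $u^6v,\dots,uv^6,v^7$.

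The iteration proceeds on the metric space of functions $v$ on $[T,T_1]\times\R^3$ with $\|v\|_{L^\infty_t(\dot H^{7/6}\cap \dot H^1)}+\|v_t\|_{L^\infty_t(\dot H^{1/6}\cap L^2)}+\|v\|_S\le R$, where $S$ is a scaling-critical Strichartz norm as in Lemma~\ref{lem:ST}. Define $v^{(n+1)}$ to be the Duhamel solution of $\Box v^{(n+1)}=\pm\sum_{k=1}^{7}\binom{7}{k}u^{7-k}(v^{(n)})^k\mp\ldots+\sum_j e_j$ with data $v[T]$. By Lemma~\ref{lem:ST}, the critical norm of $v^{(n+1)}$ is controlled by $\|v[T]\|_{\dot H^{7/6}\times\dot H^{1/6}}+\sum_j\||\nabla|^{1/6}e_j\|_{L^1_tL^2_x}+\sum_k\||\nabla|^{1/6}(u^{7-k}(v^{(n)})^k)\|_{L^1_tL^2_x}$. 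Using the fractional Leibniz rule, Hölder, and Sobolev embedding ($\dot H^1\subset L^6$, $\dot H^{7/6}\subset L^9$, $\dot H^{5/6}\subset L^{9/2}$), each nonlinear term is bounded by $(T_1-T)^\theta$ times a polynomial in $\|u\|_{L^\infty_tL^\infty_x}$ and $R$ for some $\theta>0$ coming from integrating in time over the short interval $[T,T_1]$; the pure power term $v^7$ is handled as at the end of Section~\ref{subsec:critnorm}. Choosing first $T_1-T$ small and then shrinking further, the map $v^{(n)}\mapsto v^{(n+1)}$ sends the ball of radius $R$ (with $R$ of size $\|v[T]\|+\sum_j\||\nabla|^{1/6}e_j\|_{L^1_tL^2_x}$) into itself and is a contraction in the weaker $L^6_tL^{18}_x$ topology. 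The fixed point is the desired solution $v$ on $[T,T_1]$, and its compact support on each time slice is inherited from that of $v[T]$ and $e_j$ by finite propagation speed. The propagation of higher regularity $v[T]\in\dot H^s\times\dot H^{s-1}$, $s>\f76$, follows by commuting $|\nabla|^{s-7/6}$ through the same estimates (using that $u$ is smooth where it is supported, so $|\nabla|^\sigma u$ obeys the same pointwise bounds up to constants on the relevant region), giving a closed estimate for $\|v\|_{\dot H^s}$ on the same interval after possibly shrinking $T_1-T$ once more.

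The main obstacle is not the contraction itself but verifying that the coefficient $u$ from~\eqref{eq:u large} — which now involves the large parameter $\tilde q_1$ and the narrow transition region of width $\ell=\ell(\tilde q_1,\tilde q_2)$ — still enjoys the uniform-in-$t$ bounds $|u(t,r)|\lesssim t^{-1/3}$ and $\||\nabla|^{1/6}u(t,\cdot)\|_{L^{18}_x}\lesssim t^{1/9-1/3}$ needed to make the nonlinear estimates close. This is where Lemmas~\ref{lem:near10} and~\ref{lem:farfromcone} enter: they guarantee that $Q(a)$ is globally bounded with $|Q(a)|\lesssim a^{-1/3}$, $|Q'(a)|\lesssim a^{-4/3}$, so that after the cutoff $\chi(t-r)$ the same pointwise and frequency-localized bounds for $u$ used in Section~\ref{subsec:critnorm} remain valid (with constants depending on $\tilde q_1$, which is harmless since $T$ is fixed and the interval is short). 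Once these bounds are in hand, the local theory is entirely standard and essentially identical to that of Proposition~\ref{prop:local}, so the proof can be stated by reference to it.
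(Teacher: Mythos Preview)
Your proposal is correct and follows essentially the same route as the paper: run the Picard iteration of Proposition~\ref{prop:local} on a short interval $[T,T_1]$, with the smallness needed for contraction now coming from the length $T_1-T$ rather than from the size of~$u$. The paper states this in two sentences, noting only that one replaces $[T,T+1]$ by $[T,T+\kappa]$ with $\kappa$ depending on $\sup_t\|\,|\nabla|^{1/6}u\|_{L_x^9}+\sup_t\|u\|_{L_x^{18}}$; your more detailed write-up (including the appeal to Lemmas~\ref{lem:near10} and~\ref{lem:farfromcone} for the global bounds on~$Q$) is consistent with this, though note the relevant norm on $|\nabla|^{1/6}u$ is $L_x^9$ rather than $L_x^{18}$.
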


This is essentially proved in the same fashion as Proposition~\ref{prop:local},  see the final section.

\begin{prop}
\label{prop:bootstrap1} 
Let  $C\geq 1$ and $\tilde{q}_1$ as in the preceding proposition be fixed, $T\geq 1 $ sufficiently large, and $q_0$ (as in the preceding proposition) sufficiently small. Also, for a sufficiently small $\delta_1$,  suppose that $v[T]$ with support on $r\in [T-C, T+C]$, satisfies 
\[
\|v[T]\|_{\dot{H}^{\frac{7}{6}}\cap\dot{H}^1(\R^{3})\times \dot{H}^{\frac{1}{6}}\cap L^2(\R^{3})} \leq \delta_1\ll1 
\]
Then there exists $C_1>1$ with $C_1\delta_1\ll 1$ as well as $\eps = \eps(q_0)>0$ such that for any $T_1>T$
\[
\|v\|_{L_t^6 L_x^{18}([T, T_1]\times \R^3)} + \sup_{t\in [T, T_1]}\|v[t]\|_{\dot{H}^{\frac{7}{6}}\cap (t-T)^{\eps}\dot{H}^1(\R^{3})\times \dot{H}^{\frac{1}{6}}\cap (t-T)^{\eps}L^2(\R^{3})}\leq C_1\delta_1
\]
implies 
\[
\|v\|_{L_t^6 L_x^{18}([T, T_1]\times \R^3)} + \sup_{t\in [T, T_1]}\|v[t]\|_{\dot{H}^{\frac{7}{6}}\cap (t-T)^{\eps}\dot{H}^1(\R^{3})\times \dot{H}^{\frac{1}{6}}\cap (t-T)^{\eps}L^2(\R^{3})}\leq \frac{C_1}{2}\delta_1.
\]
We may also include any other Strichartz norm on the left-hand side, see Lemma~\ref{lem:ST}. 
\end{prop}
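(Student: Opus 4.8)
The plan is to mimic the bootstrap argument that proved Proposition~\ref{prop:bootstrap}, working on the shifted time interval $[T,T_1]$ with the weight $(t-T)^\eps$ in place of $t^\eps$, and exploiting the fact that the errors $e_j$ and the building block $u$ from~\eqref{eq:u large} are now better by powers of $T$ once $T$ is taken large. First I would record the two inputs that drive everything: the pointwise bound $|u(t,r)|\lesssim t^{-\f13}Q_2(1) + t^{-\f13}|1-a|^{\f23}|X(a)|$ from~\eqref{eq:u large}, which on $|t-r|\ge C$ and $t\ge T$ gives $\sup_{r>0}|u(t,r)|\lesssim t^{-\f13}$ with an implicit constant depending on $q_0,\tilde q_1$ but \emph{not} on $T$; and the error bounds from the surgery computation of Section~\ref{sec:surgery1}, which give $e_j$ localized to $|t-r|\lesssim 1$ and of size $t^{-\f73}$, hence $\|\,|\nabla|^{\f16}e_j(t)\|_{L^2_x}\lesssim t^{-\f43-\f16}$ or so, integrable in $t$ over $[T,\infty)$ with a bound that is small for $T$ large. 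This is the replacement for the smallness $\delta_2^3$ used before: there the approximate solution was small in amplitude, here we instead buy smallness by starting the evolution at a late time $T$, exactly as flagged in the introduction (``the key to obtaining the smallness gain $\ldots$ comes from choosing the time $t\ge T$ large enough'').

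Next I would run the two subarguments in parallel with Sections~\ref{subsec:ener} and~\ref{subsec:critnorm}. For the energy part: differentiate the modified energy $\int[\f12(v_t^2+|\nabla v|^2)\pm\f72 u^6 v^2\pm\cdots\pm\f18 v^8]\,dx$, integrate from $T$ to $t$, and estimate every source term. The terms $\int u^6 v^2$, $\int u^j v^{8-j}$, $\int v^8$ are absorbed or bounded by $(C_1\delta_1)^{\cdot}$ exactly as before, now picking up powers $t^{\eps\alpha(8-j)}$ that are controlled by choosing $\eps=\eps(q_0)$ small; the only change is that the implicit constants depend on $q_0,\tilde q_1$, which is harmless since those are fixed before $C_1$. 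The delicate terms $\int u_t u^5 v^2$ and $\int u_t v^7$ are handled by the same split into $|t-r|\le t/2$ and $|t-r|>t/2$, using the expansion of $u_t$ analogous to~\eqref{eq:u_t} (now read off from~\eqref{eq:u large}) and the radial Sobolev bound $|v(t,r)|\lesssim r^{-\f12}\|v\|_{\dot H^1}$; the time integrals $\int_T^t s^{2\eps-1}\,ds$ etc.\ are bounded by $(t-T)^{2\eps}\eps^{-1}$, and multiplying by a factor that is small for $T$ large closes the estimate $\sup_{t\in[T,T_1]}(t-T)^{-\eps}\|\nabla_{t,x}v(t)\|_{L^2_x}\ll C_1\delta_1$. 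For the critical-norm part I would again split $u=P_{<t^{-\si}}u + P_{\ge t^{-\si}}u$, use $\|P_{\ge t^{-\si}}u(t)\|_{L^{18}_x}\lesssim t^\si\|\partial_r u(t)\|_{L^{18}_x}\lesssim t^{\si+\f19-\f13}$ (with the $q_0,\tilde q_1$-dependent constant) for the high-frequency piece, and Bernstein $\|\,|\nabla|^{\f16}P_{<t^{-\si}}u\|_{L^{18}_x}\lesssim t^{-\si/6}\|u(t)\|_{L^{18}_x}$ together with $\dot H^{5/6}\subset L^{9/2}$ for the low-frequency piece, exactly as in~\eqref{eq:finaltechnical}; every resulting time integral is of the form $\|t^{\beta}\|_{L^6_t[T,T_1]}$ with $\beta<-\f16$ for $\si>0$ small, hence finite, and carries a factor small for $T$ large. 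The pure-power term $\|\,|\nabla|^{\f16}(v^7)\|_{L^1_tL^2_x}\lesssim\|v\|_{L^6_tL^{18}_x}^6\|v\|_{L^\infty_t\dot H^{7/6}}\lesssim(C_1\delta_1)^7\ll C_1\delta_1$ closes on smallness of $\delta_1$.

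Putting the two halves together and choosing first $\eps=\eps(q_0)$ small, then $T$ large (depending on $q_0,\tilde q_1,\si,\eps$), then $\delta_1$ small, then $C_1$ large with $C_1\delta_1\ll1$, every contribution to the right-hand side of the Duhamel/energy inequalities is $\le\f14 C_1\delta_1$ except the free-data term $\|v[T]\|_{\dot H^{7/6}\cap\dot H^1\times\dot H^{1/6}\cap L^2}\le\delta_1$, which is $\le\f14 C_1\delta_1$ once $C_1\ge4$; this yields the improved bound $\le\f{C_1}{2}\delta_1$ and completes the bootstrap. Including an arbitrary admissible Strichartz norm on the left is automatic from Lemma~\ref{lem:ST}, since the right-hand side of the Strichartz estimate has already been controlled. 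The main obstacle, as in the small-solution case, is the interplay in the energy estimate between the incoming-wave growth (the energy on $r<t$ grows like $t^{\f13}$) and the need for the source terms to ``beat the scaling'': the crucial point is that each dangerous term carries either an explicit negative power of $t$ from the errors $e_j$, or a gain from the low-frequency localization of $u$, which after the time integration is small precisely because $T$ is large — it is this late-time smallness, rather than amplitude smallness, that must be tracked carefully through every term.
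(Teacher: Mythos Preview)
Your outline tracks the paper's argument closely for the critical-norm half and for most of the energy estimate, but there is a genuine gap in your treatment of the source term $\int u_t u^5 v^2\,dx$. You propose to reuse the split $|t-r|\le t/2$ versus $|t-r|>t/2$ from Section~\ref{subsec:ener} and then absorb the $\tilde q_1$-dependent constant by taking $T$ large. That does not close. In the region $t<r<3t/2$ the approximate solution $u$ is now the \emph{large} exterior self-similar piece, so the bound you get is $C(\tilde q_1)\,t^{-1}\|v(t)\|_{\dot H^1}^2$ with no small prefactor. After inserting the weight $(t-T)^{-\eps}$ and integrating in time you are left with
\[
C(\tilde q_1)\int_T^{T_1} s^{-1}(s-T)^{2\eps}\,ds\cdot\Big(\sup_{t}(t-T)^{-\eps}\|v(t)\|_{\dot H^1}\Big)^2,
\]
and for $T_1\gg T$ the integral is of order $T_1^{2\eps}/\eps$, comparable to the weight $(T_1-T)^{2\eps}$ you need to beat. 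There is no factor that becomes small as $T\to\infty$, so the bootstrap does not improve. Your sentence ``the implicit constants depend on $q_0,\tilde q_1$, which is harmless'' is exactly where the argument breaks.

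The paper repairs this with a different split, and this is where the support hypothesis on $v[T]$ --- which you state but never use --- enters. By finite propagation speed, $v(t,\cdot)$ is supported in $r\le t+C$. One then splits $\int u_t u^5 v^2$ into the interior $r<t$ and the thin exterior annulus $t<r<t+C$. Inside the cone $u$ is the \emph{small} self-similar solution determined by $q_0$, so the estimates following~\eqref{eq:delicate} apply verbatim with smallness coming from $q_0$ (this is why $\eps=\eps(q_0)$ and why $q_0$ must be small). On the annulus $t<r<t+C$, the radial Sobolev bound $|v|\lesssim r^{-1/2}\|v\|_{\dot H^1}$ together with $|u_t|\lesssim t^{-1}$, $|u|\lesssim t^{-1/3}$ and the width $O(1)$ of the annulus yield $\int_{t<r<t+C}|u_t u^5 v^2|\,dx\lesssim C(\tilde q_1)\,t^{-5/3}\|v\|_{\dot H^1}^2$; the exponent $-5/3$ (rather than $-1$) makes $\int_T^\infty s^{2\eps-5/3}\,ds$ genuinely small for $T$ large. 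Without invoking Huygens' principle and the interior/exterior dichotomy of $u$, the energy half of the bootstrap cannot be closed.
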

\begin{proof} We proceed in close analogy to the proof of 
Proposition~\ref{prop:bootstrap}, considering first the energy and then the scaling invariant norm.
Notice carefully that the errors $e_j$ will not be small in a pointwise sense.   However, due to the support
condition on~$v$ and by taking the initial time~$T$ sufficiently large, we will see that the influence of 
the errors $e_j$ on the solution can be made as small as we wish. 

\subsection{Energy control}
Observe that for $t\in [T, T_1]$
\begin{equation}\label{eq:enerident1}\begin{split}
&\int_{\R^{3}}\big[\frac{1}{2}\big(v_t^2 + |\nabla v|^2\big) +\frac{7}{2}u^6v^2+\ldots + uv^7 +\frac{1}{8}v^8\big](t, \cdot)\,dx\\
&-\int_{\R^{3}}\big[\frac{1}{2}\big(v_t^2 + |\nabla v|^2\big) +\frac{7}{2}u^6v^2+\ldots+ uv^7 +\frac{1}{8}v^8\big](T, \cdot)\,dx\\
&= \int_T^t\int_{\R^{3}}\big[\sum_{j=1}^3 -e_j v_t +21 u_t u^5 v^2 +\ldots + u_t v^7\big]\,dx dt
\end{split}\end{equation}
Note that 
\[
\int u^6(T, \cdot) v^2(T, \cdot)\,dx\lesssim \|v\|_{\dot{H}^1}^2\int_{T-C}^{T+C}r^{-2}r^{-1}r^2\,dr\lesssim T^{-1}\|v\|_{\dot{H}^1}^2\ll \delta_1^2
\]
provided we choose $T$ large enough (depending on $\tilde{q}_1$ which now influences the size of $u$). 
Moreover, exactly as in subsection~\ref{subsec:ener}, we  obtain for $j \in [1,5]$ the bounds 
\begin{align*}
&\sup_{t\in [T,T_1]}\|u^jv^{8-j}(t, \cdot)\|_{L_x^1}\\&\lesssim T^{\eps\alpha(8-j)-\frac{1}{9}}\big(\sup_{t\in [T,T_1]}(T-t)^{-\eps}\|v(t, \cdot)\|_{\dot{H}^{1}}\big)^{\alpha(8-j)}\|v(t, \cdot)\|_{\dot{H}^{\frac{7}{6}}}^{(1-\alpha)(8-j)}\\
\end{align*}
where here and in the sequel the implicit constant depends on $\tilde{q}_1$, and by choosing $T$ sufficiently large, we can make this 
\[
\ll \delta_1^2. 
\]
 Moreover, the contribution of the pure power term $v^8$ is estimated as before by 
 \begin{align*}
\|v^8(T, \cdot)\|_{L_x^1}&\leq \|v(t, \cdot)\|_{\dot{H}^1}^2\big(\sup_{t\in [1,T]}\|v(t, \cdot)\|_{\dot{H}^{\frac{7}{6}}}^6\big)\\
&\lesssim \|v(t, \cdot)\|_{\dot{H}^1}^2 (C\delta_1)^6\ll \delta_1^2
\end{align*}
It remains to control the terms on the right hand side of \eqref{eq:enerident1}. The contribution of the terms involving the $e_j$ is again straightforward. In fact, just as in subsection~\ref{subsec:ener}, we get 
\[
\big|\int_T^t e_j v_t\,dx dt\big|\lesssim \big(\int_T^t s^{\eps-\frac{4}{3}}\,ds\big)\big(\sup_{t\in [T, T_1]}(T-t)^{-\eps}\|v_t(t, \cdot)\|_{L_x^2}\big)\ll \delta_1^2
\]
provided we choose $T$ sufficiently large. It remains to consider the remaining source terms 
\[
21 u_t u^5 v_t,\ldots, u_t v^7, 
\]
of which only the first one is delicate, as the others all result in gains in $T$, whence the required smallness gain. 
To handle the delicate term, we write (on a fixed time slice)
\[
\int u_t u^5 v^2\,dx = \int_{r<t}u_t u^5 v^2\,dx  + \int_{t<r<t+C} u_t u^5 v^2\,,dx 
\]
Here we have exploited the fact that by the Huyghen's principle, the perturbation $v$ is supported in the neighborhood $r<t+C$ of the forward light cone. 
Since the approximately self-similar $u(t, r)$ is given by the small-data ansatz in the interior of the light cone, we can repeat verbatim the estimates following \eqref{eq:delicate} to conclude that 
\[
\int_{T}^{T_1}\int_{r<t}\big| u_t u^5 v^2\big|\,dxdt\ll \delta_1^2,
\]
provided the $q_0$ is chosen sufficiently small.  Thus consider now the term 
\begin{align*}
\int_{t<r<t+C} u_t u^5 v^2\,dx 
\end{align*}
Using the bound $|u_t|\lesssim t^{-1}$, see \eqref{eq:u_t}, we can bound this by 
\begin{align*}
\big|\int_{t<r<t+C} u_t u^5 v^2\,dx \big|&\lesssim \|v\|_{\dot{H}^1}^2 t^{-\frac{8}{3}}\int_{t<r<t+C}r^{-1}r^2\,dr\\&
\lesssim 
t^{2\eps - \frac{5}{3}}\big(\sup_{t\in [T, T_1]}(T-t)^{-\eps}\|v(t, \cdot)\|_{\dot{H}^1}\big)^2,
\end{align*}
and so we infer 
\begin{align*}
\int_{T}^{T_1}\int_{t<r<t+C}\big| u_t u^5 v^2\big|\,dxdt&\lesssim \int_{T}^{T_1}t^{2\eps - \frac{5}{3}}\,dt\big(\sup_{t\in [T, T_1]}(T-t)^{-\eps}\|v(t, \cdot)\|_{\dot{H}^1}\big)^2\\
&\ll \delta_1^2,
\end{align*}
provided  $T$ is sufficiently large. 

The expression 
\[
\int_{T}^t\int u_t v^7\,dxdt,
\]
as well as all the ``intermediate source terms'' in \eqref{eq:enerident1}, are again all estimated just as in subsection~\ref{subsec:ener}, resulting in gains  of  a factor  $T^{-1}$ which furnishes the required smallness. 
This completes the bootstrap for the energy norm 
\[
\sup_{t\in [T, T_1]}(T-t)^{-\eps}\|v(t, \cdot)\|_{\dot{H}^1} + \sup_{t\in [T, T_1]}(T-t)^{-\eps}\|v_t(t, \cdot)\|_{L^2}.
\]
\subsection{Critical norm control} 
We  repeat the estimates from subsection~\ref{subsec:critnorm}, which are all seen to result in a gain of a factor~$T^{-1}$ (for the nonlinear source terms), and so the bootstrap is immediate by choosing $T$ large enough. 
This completes the proof of Proposition~\ref{prop:bootstrap1}.
\end{proof}

\subsection{Proofs of Theorems~\ref{thm:Main2}, \ref{thm:intro2}}
Invoking Propositions~\ref{prop:local1}, \ref{prop:bootstrap1} we have shown that the approximate solution $u(t, r)$ can be completed to an exact global-in-forward time solution 
\[
\tilde{u}(t, r) = u(t, r) + v(t,r). 
\]
Moreover, this solution preserves any additional regularity of the data $v[T]$ above $\dot{H}^{\frac{7}{6}}(\R^3)\times \dot{H}^{\frac{1}{6}}(\R^3)$. 

Translating the time $t = T$ to time $t = 0$, picking $\tilde{q}_1\gg  M^9$ (see Lemma~\ref{lem:near10})  and re-scaling 
\[
\tilde{u}(t, r)\longrightarrow T^{\frac{1}{3}}\tilde{u}(T t, Tr),
\]
we have now shown Theorem~\ref{thm:Main2}.  The largeness condition~\eqref{fM} is an immediate consequence of the 
estimate \eqref{Q gross} and the fact that we may choose the initial data so as not to destroy this pointwise property. 

Theorem~\ref{thm:intro2} is proved by truncation, analogously to the way in which we obtained Theorem~\ref{thm:intro1}. 
Once again, finite time blowup can only occur at the origin due to the pointwise a priori bound for all $r>0$ (fixed) uniformly in time
as a result of the conserved positive definite energy. 

\section{Local solvability of the perturbative equation}\label{sec:local}

Here we prove Proposition~\ref{prop:local}. Thus let $v[T]$ be as in that proposition. We immediately observe from their definition that the errors $e_j$ have compact support on fixed time slices, and hence the compact (spatial) support of $v[T]$ will imply that of $v[t]$ for any $t$. We construct $v$ as a limit of the iterative process 
\[
-v^{(j)}_{tt} + \triangle v^{(j)} \mp 7u^6v^{(j-1)} \mp \ldots \mp 7u(v^{(j-1)})^6 \mp (v^{(j-1)})^7 = \sum_{i=1}^3 e_i,\;\; j\geq 1,\,
\]
\[
v^{(0)}(\cdot) = S(\cdot)(v[T]),
\]
where $S(\cdot)$ denotes the standard free wave propagator. We shall assume that 
$$\|v[T]\|_{\dot{H}^{\frac{7}{6}}\times \dot{H}^{\frac{1}{6}}(\R^3)} \le \delta$$ where $\delta$ 
is some small but absolute constant, and then show that the sequence $v^{(j)}$ 
converges in $$L_t^6 L_x^{18}(\R^3)\cap L_t^\infty\dot{H}^{\frac{7}{6}}(\R^3)$$ on the time slice $[T, T+1]\times \R^3$. We may assume that
\[
\|u\|_{L_t^6 L_x^{18}([T, T+1]\times \R^3)}\leq C_1\delta,\;\;\;\sum_{i=1}^3\|e_i\|_{L_t^1 \dot{H}^{\frac{1}{6}}([T, T+1]\times\R^3)}\leq C_2\delta
\]
for some constants $C_{1,2}$, uniformly in $T\geq 1$ (see Corollary~\ref{cor:3}). 
We conclude from Strichartz' inequality, see Lemma~\ref{lem:ST},  that 
\begin{align*}
&\|v^{(j)}\|_{L_t^6 L_x^{18}\cap L_t^\infty\dot{H}^{\frac{7}{6}}(\R^3)} +  \|\partial_t v^{(j)}\|_{L_t^\infty\dot{H}^{\frac{1}{6}}(\R^3)}
\\&\leq C_3\big[\sum_{k=0}^{6}\|\,|\nabla|^{\frac{1}{6}}(u^k (v^{(j-1)})^{7-k})\|_{L_t^1 L_x^2([T, T+1]\times\R^3)} + \sum_{i=1}^3\|e_i\|_{L_t^1 \dot{H}^{\frac{1}{6}}(\R^3)}\big]
\end{align*}
We have 
\EQ{\label{eq:vj7}
\| (v^{(j-1)})^7\|_{L_t^1 \dot{H}^{\frac{1}{6}}([T, T+1]\times \R^3)}\leq C_4 \|v^{(j-1)}\|_{(L_t^6 L_x^{18}\cap L_t^\infty\dot{H}^{\frac{7}{6}})([T, T+1]\times \R^3)}^7
}
Indeed, by the fractional Leibnitz rule,
\EQ{\nn
\|f^7\|_{\dot H^{\f16}(\R^3)} &\less \|f\|_{\dot W^{\f16,6}(\R^3)} \|f^6\|_{L^3(\R^3)} \less \|f\|_{\dot{H}^{\frac{7}{6}}(\R^3)} \|f\|_{L^{18}(\R^3)}^6
}
Integrating this in time over $[T,T+1]$ yields \eqref{eq:vj7}. 
By the same type of reasoning, if $k = 1,2,\ldots, 6$, then we have 
\begin{align*}
&\|u^k (v^{(j-1)})^{7-k}\|_{L_t^1 \dot{H}^{\frac{1}{6}}([T, T+1]\times \R^3)}\\
&\leq C_5\|\,|\nabla|^{\frac{1}{6}}u\|_{L_t^6 L_x^9([T, T+1]\times \R^3)}\|u\|_{L_t^6 L_x^{18}([T, T+1]\times \R^3)}^{k-1}\times\\
&\qquad \qquad \times\|v^{(j-1)}\|_{L_t^6 L_x^{18}([T, T+1]\times \R^3)}^{6-k}\|v^{(j-1)}\|_{L_t^\infty \dot{H}^{\frac{7}{6}}([T, T+1]\times \R^3)}\\
& + C_6\|u\|_{L_t^6 L_x^{18}([T, T+1]\times \R^3)}^{k}\|v^{(j-1)}\|_{L_t^6 L_x^{18}([T, T+1]\times \R^3)}^{6-k}\|\,|\nabla|^{\frac{1}{6}}v^{(j-1)}\|_{L_t^\infty \dot{H}^1([T, T+1]\times \R^3)}
\end{align*}
where we have also used the Sobolev embedding (in the context of functions vanishing at infinity on $\R^3$)
\[
\dot{H}^1(\R^3)\subset L^6(\R^3)
\]
Note that $\,|\nabla|^{\frac{1}{6}}u(t, \cdot)\in L_x^9$ due to symbolic behavior with respect to $r$ for $r\gg t$. 
It then follows that provided we make the inductive assumption 
\[
\|v^{(j-1)}\|_{L_t^6 L_x^{18}\cap L_t^\infty\dot{H}^{\frac{7}{6}}([T, T+1]\times \R^3)} \leq K\delta
\]
for some sufficiently large constant $K$ (independent of $\delta$), we obtain that 
\[
\|v^{(j)}\|_{L_t^6 L_x^{18}\cap L_t^\infty\dot{H}^{\frac{7}{6}}([T, T+1]\times \R^3)}\leq C_7\,K^7\delta^7+C_8\,\delta
\]
where we have exploited that  
\[
\sum_{i=1}^3\|e_i\|_{L_t^1 \dot{H}^{\frac{1}{6}}}\leq C_8\,\delta
\]
as well as 
\[
\|u\|_{L_t^6 L_x^{18}\cap L_t^6\,|\nabla|^{-\frac{1}{6}}L_x^9([T, T+1]\times \R^3)}\leq C_9\,\delta
\]
from our choice of $\delta$. 
Choosing $\delta>0$ small enough in relation to $C_7$ and $K$ large enough in relation to $C_8$, we obtain 
\[
\|v^{(j)}\|_{L_t^6 L_x^{18}\cap L_t^\infty\dot{H}^{\frac{7}{6}}([T, T+1]\times \R^3)}\leq K\delta
\]
and thus we get the desired a priori bound. Passing to the difference equation yields the convergence of the $v^{(j)}$. 
The higher derivative bounds follow in standard fashion by differentiating the equation for $v^{(j)}$. 
This completes the proof of Proposition~\ref{prop:local}. 

\bigskip

Next, in order to prove Proposition~\ref{prop:local1}, the main difference lies with the fact that the function $u$ is no longer small. 
Thus in order to ensure convergence of the iteration, one needs to replace the interval $[T, T+1]$ by one of the form $[T, T+\kappa]$ where $\kappa = \kappa(u)$ depends on 
$$\sup_{t}\|\,|\nabla|^{\frac{1}{6}}u\|_{L_x^9} + \sup_{t}\|u\|_{L_x^{18}}$$
 Otherwise, the argument is identical to the preceding one.

\bigskip

\centerline{\scshape Joachim Krieger }
\medskip
{\footnotesize
 \centerline{B\^{a}timent des Math\'ematiques, EPFL}
\centerline{Station 8, 
CH-1015 Lausanne, 
  Switzerland}
  \centerline{\email{joachim.krieger@epfl.ch}}
} 

\medskip

\centerline{\scshape Wilhelm Schlag}
\medskip
{\footnotesize
 \centerline{Department of Mathematics, The University of Chicago}
\centerline{5734 South University Avenue, Chicago, IL 60615, U.S.A.}
\centerline{\email{schlag@math.uchicago.edu}
}
} 

\end{document}